\renewcommand{\email}[2][]{%
  \ifx\emails\@empty\relax\else{\g@addto@macro\emails{,\space}}\fi%
  \@ifnotempty{#1}{\g@addto@macro\emails{\textrm{(#1)}\space}}%
  \g@addto@macro\emails{#2}%
}
\newcommand{\DET}[1]{}
\newcommand{\DS}{\displaystyle}
\newtheorem{theorem}{Theorem}[section] %
\newtheorem{lemma}[theorem]{Lemma}
\newtheorem{definition}[theorem]{Definition}
\newtheorem{proposition}[theorem]{Proposition}
\newtheorem{remark}[theorem]{Remark} %
\newcommand{\BS}[0]{\backslash}
\newcommand{\FA}[1]{\ensuremath{\forall #1}}
\newcommand{\HW}[1]{} 
\renewcommand{\t}{\tilde}
\newcommand{\p}{\partial}
\renewcommand{\d}{\delta}
\newcommand{\SUS}{\subset}
\newcommand{\Rn}{\R^n}
\newcounter{pcounter}
\renewcommand{\AA}{{\mathcal A}}
\newcommand{\EE}{{\mathcal E}}
\newcommand{\HH}{{\mathcal H}}
\newcommand{\II}{{\mathcal I}}
\newcommand{\MM}{{\mathcal M}}
\newcommand{\OO}{{\mathcal O}}
\newcommand{\R}{\mathbb{R}}
\newcommand{\Z}{\mathbb{Z}}
\newcommand{\alp}{\alpha}
\newcommand{\gam}{\gamma}
\newcommand{\eps}{\epsilon}
\newcommand{\lam}{\lambda}
\renewcommand{\phi}{\varphi}
\newcommand{\ome}{\omega}
\newcommand{\sig}{\sigma}
\newcommand{\Gam}{\Gamma}
\newcommand{\Lam}{\Lambda}
\newcommand{\Ome}{\Omega}
\DeclareMathOperator*{\dist}{dist}
\DeclareMathOperator*{\spt}{spt}
\renewcommand{\iint}{\int\!\!\!\!\int}
\def\Xint#1{\mathchoice
{\XXint\displaystyle\textstyle{#1}}%
{\XXint\textstyle\scriptstyle{#1}}%
{\XXint\scriptstyle\scriptscriptstyle{#1}}%
{\XXint\scriptscriptstyle\scriptscriptstyle{#1}}%
\!\int}
\def\XXint#1#2#3{{\setbox0=\hbox{$#1{#2#3}{\int}$}
\vcenter{\hbox{$#2#3$}}\kern-.5\wd0}}
\def\dashint{\Xint-}
\newcommand{\lupref}[2]{\hspace{0ex} \stackrel{\eqref{#1}}{#2}} 
\definecolor{verylightblue}{rgb}{0.95, 0.95, 0.95}  
\definecolor{lightblue}{rgb}{0.7, 0.7, 1}
\definecolor{eqyellow}{rgb}{0.9375,0.8984,0.5469}
\definecolor{subeqyellow}{rgb}{1,0.9373,0.8353}
\definecolor{darkcyan}{rgb}{0.0, 0.45, 0.95} 
\definecolor{mygreen}{rgb}{0.3, 0.6, 0.3} 
\definecolor{verylightgreen}{rgb}{0.95, 0.95, 0.95} 
\definecolor{verydarkgreen}{rgb}{0, 0.5, 0}
\definecolor{darkgreen}{rgb}{0.85, 0.85, 0.85}  
\definecolor{mydarkgreen}{rgb}{0, 0.5, 0} 
\definecolor{mybrown}{rgb}{0.85, 0.4, 0.3}
\definecolor{verylightbrown}{rgb}{0.98, 0.72, 0.58}
\definecolor{verydarkbrown}{rgb}{0.44, 0.26, 0.26}
\definecolor{orange}{rgb}{1, 0.5, 0}
\definecolor{BurntOrange}{rgb}{1,0.356,0}
\newcommand{\BurntOrange}{\color{BurntOrange}}
\definecolor{mydarkred}{rgb}{1,0.086,0.255}
\definecolor{RoseVYDP}{rgb}{0.84,0.086,0.255}
\definecolor{dgreen}{rgb}{0, 0.8, 0.5}     
\definecolor{CanaryBRT}{rgb}{1,0.76,0.26}
\definecolor{cyan}{rgb}{0, 1, 1}
\definecolor{verylightgray}{rgb}{0.95, 0.95, 0.95}
\definecolor{lightgray}{rgb}{0.8, 0.8, 0.8}
\definecolor{verylightred}{rgb}{1, 0.8, 0.78}
\definecolor{verylightyellow}{rgb}{0.99, 0.98, 0.5}
\newcommand{\blue}[1]{{\color{blue}{#1}}}
\newcommand{\ignore}[1]{{}}
\newcommand{\NN}[1]{\|#1\|}
\newcommand{\NNN}[2]{\|#1\|_{#2}}
\newcommand{\skp}[2]{(#1, #2)}
\newcommand{\NI}[1]{\|#1\|_{L^\infty}}
\newcommand{\NP}[2]{\|#1\|_{L^{#2}}}
\newcommand{\NPL}[3]{{\|#1\|_{L^{#2}(#3)}}}
\newcommand{\cciL}[1]{C_c^{\infty}(#1)}
\newcommand{\VEC}[2][r]{
  \gdef\@VORNE{1}
  \left(\hskip-\arraycolsep%
    \begin{array}{#1}\vekSp@lten{#2}\end{array}%
  \hskip-\arraycolsep\right)}
\def\vekSp@lten#1{\xvekSp@lten#1;vekL@stLine;}
\def\vekL@stLine{vekL@stLine}
\def\xvekSp@lten#1;{\def\temp{#1}%
  \ifx\temp\vekL@stLine
  \else
    \ifnum\@VORNE=1\gdef\@VORNE{0}
    \else\@arraycr\fi%
    #1%
    \expandafter\xvekSp@lten
  \fi}
\renewcommand{\S}{{\mathbb S}}  %
\newcommand{\hk}[1]{{\blue{#1}}} %
\newcommand{\hkcc}[1]{}
\newcommand{\tbl}[1]{{\BurntOrange{$\langle\hspace{-0.3ex}\langle$#1$\rangle\hspace{-0.3ex}\rangle$}}} %
\newcommand{\T}{{\mathbb T}} %
\newcommand{\wtos}{\stackrel{*}{\rightharpoonup}} %
\newcommand{\Td}{{\T_\ell}}
\newcommand{\Rd}{\R^d}
\newcounter{margcount} 
\newcommand{\Gamto}{\stackrel{\Gam}{\to}}
\begin{document}

\title{Second order expansion for the nonlocal perimeter functional}

\author{Hans Knüpfer} 
\email[H. Knüpfer]{hans.knuepfer@math.uni-heidelberg.de}
\address[H. Knüpfer]{University of Heidelberg, MATCH and IWR, INF 205, 69120 Heidelberg, Germany}

\author {Wenhui Shi} 
\email[W. Shi]{wenhui.shi@monash.edu}
\address[W. Shi]{9 Rainforest Walk Level 4, Monash University, VIC 3800, Australia}

\begin{abstract}
  The seminal results of Bourgain, Brezis, Mironescu \cite{BBM01} and D\'avila
    \cite{Da02} show that the classical perimeter can be approximated by a
    family of nonlocal perimeter functionals. We consider a corresponding second
    order expansion for the nonlocal perimeter functional. In a special case,
    the considered family of energies is also relevant for a variational model
    for thin ferromagnetic films.  We derive the $\Gam$--limit of these
    functionals as $\eps \to 0$. We also show existence for minimizers with
    prescribed volume fraction.  For small volume fraction, the unique, up to
    translation, minimizer of the limit energy is given by the ball. The
    analysis is based on a systematic exploitation of the associated
    symmetrized autocorrelation function.
\end{abstract}


\maketitle

\tableofcontents


\section{Introduction and statement of main results}

The seminal results of Bourgain, Brezis, Mironescu \cite{BBM01} and D\'avila
\cite{Da02} show that the classical perimeter can be approximated by a family of
nonlocal perimeter functionals.  In a slightly reformulated setting this implies
that for any function $u\in BV(\Td,\{0,1\})$ we have
  \begin{align} \label{first-order} \dashint_{\Td} |\nabla u| \ dx \ = \
    \lim_{\eps\to 0} \frac 1{M_\eps}\int_{\Rd} K_\eps(z) \dashint_{\Td}
    \frac{|u(x+z)-u(x)|}{2|z|} \ dxdz,
  \end{align}
  for a family of kernels $K_\eps:\R^d\rightarrow [0,\infty)$,
  $K_\eps\in L^1(B_1)$, $K_\eps(z)=K_\eps(|z|)$ and which asymptotically have a
  singularity at the origin as $\eps\rightarrow 0$ (see
  \eqref{eq-molli-1}--\eqref{eq-molli-3} below). Here, $\Td$ is the flat
    torus with sidelength $\ell>0$, Furthermore, $M_\eps$ is defined by
\begin{align}
  M_\eps \ := \ \ome_{d-1} \int_0^1  K_\eps(t) t^{d-1} \ dt,
\end{align}
for $\eps>0$ and where $\ome_{d}$ is the volume of the unit ball in $\R^d$. In
this paper we consider the corresponding second order expansion for the
nonlocal perimeter functional, i.e. we consider the family of energies
\begin{align}\label{E-eps}
  \EE_\eps(u) \ %
  &:= \ M_\eps \dashint_{\Td} |\nabla u| \ dx   - \int_{\Rd} K_\eps(z) \dashint_{\Td} \frac{|u(x+z)-u(x)|}{2|z|} \ dxdz.
\end{align}
We assume that the kernels satisfy
$K_\eps \nearrow K_0$ for $\eps\searrow 0$ with
\begin{align} 
  \int_0^\infty K_0(r) r^{d-1} \ dr \ 
  = \ \infty \qquad 
      \label{eq-molli-1}     \\
  \int_\delta^\infty K_0(r)  \min \{ r^{d-2}, r^{d-1} \} \ dr \
  &< \ \infty 
  \qquad \FA{\delta > 0}.   \label{eq-molli-3}
\end{align}
In particular, this includes the following kernels
\begin{align+}
  K_\eps^{(1)}(r) \ &= \ r^{-d} \chi_{(\eps,\infty)}(r)  \label{exm-E1} \\
  K_{\eps}^{(2)}(r) \ &= \ r^{-d+\eps} \label{exm-E2} \\
  K_\eps^{(3)}(r) \ &= \ r^{-q} \chi_{(\eps,\infty)}(r) \qquad\qquad \text{for  $q > d$.} \label{exm-E3}
\end{align+}
We denote the corresponding energies by $\EE_\eps^{(i)}$ for $i = 1,2,3$.
When $d = 2$, $\EE_\eps^{(1)}$ is a sharp interface version of a problem from
micromagnetism for thin plates with perpendicular anisotropy
\cite{KB15,KMN19,nolte_phd, MS19}. In this setting, $\{u=1\}$ and $\{u=0\}$ are
the magnetic domains where the magnetization points either upward or
downward. The interfacial energy penalizes interfaces between the magnetic
domains, while the nonlocal term describes the dipolar magnetostatic interaction
energy between the domains. 
For $\EE_\eps^{(2)}$, the nonlocal term in the energy \eqref{E-eps} is the
so-called fractional perimeter $P_s(\Ome)$ with $s=1-\eps$ of the set
$\Ome:=\spt u$ (see
  e.g. \cite{CRS10, CV11,CabreCintiSerra-2020}), and the functional
is the second order asymptotic expansion of the fractional perimeter as
$s\nearrow 1$.  The third energy $\EE_\eps^{(3)}$ shows that our model also
includes some cases where the nonlocal term in the energy is more singular in
terms of its scaling than the perimeter.

\medskip

In the first two cases \eqref{exm-E1}--\eqref{exm-E2}, the nonlocal term and the
perimeter term in \eqref{E-eps} both formally have the same scaling for
$\eps = 0$. However, due to the non-integrability of the kernel $r^{-d}$ we have
$M_\eps \to \infty$, and both terms in \eqref{E-eps} are infinite for
$\eps = 0$. In our analysis we show that the singularity of these two terms
cancels in the leading order, and we calculate the remainder which describes
  the limiting behaviour of \eqref{E-eps} for $\eps \to 0$.  Our main
  result establishes compactness and $\Gamma$-convergence for the family of
  energies \eqref{E-eps}:
\begin{theorem}\label{thm-gamma} 
  Let $K_\eps \nearrow K_0$ where $K_0$ satisfies
  \eqref{eq-molli-1}--\eqref{eq-molli-3}.
 \begin{enumerate} 
\item \emph{(Compactness)}  For any family
  $u_\eps\in BV(\Td;\{0,1\})$ with $\sup_\eps \EE_\eps(u_\eps)<\infty$, there
  exists $u\in BV(\Td;\{0,1\})$ and a subsequence (not relabelled) such that
    \begin{align}
      u_\eps \to u \text{ in } L^1(\Td) \quad 
      \text{ and } \quad 
      \int_\Td |\nabla u_\eps| \ \to \ \int_\Td |\nabla u| \qquad \text{for $\eps \to 0$}.
    \end{align}
    \item \emph{($\Gamma$-convergence)} $\EE_\eps \Gamto \EE_0$ in the
  $L^1$--topology, where
  \begin{align} \label{E-0} %
    \EE_0(u) := \frac{1}{2}\int_{\Rd} \dashint_{\Td} K_0(|z|) \Big(\big
    |\frac{z}{|z|}\cdot \nabla u(x)\big| \chi_{B_1}(z)-
    \frac{|u(x+z)-u(x)|}{|z|} \Big) dx dz
\end{align}
for $u\in BV(\Td;\{0,1\})$ and $\EE_0(u)=+\infty$ otherwise.
\end{enumerate}
\end{theorem}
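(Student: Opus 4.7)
The plan is to rewrite $\EE_\eps$ via the symmetrized autocorrelation $\gam_u(z) := \dashint_{\Td}|u(x+z) - u(x)| \, dx$ and the pointwise estimate $\gam_u(r\nu) \leq r \dashint|\nu \cdot Du|$, valid for all $r > 0$, $\nu \in S^{d-1}$ (by 1D BV slicing). Using polar coordinates together with the isotropy identity $\int_{S^{d-1}}|\nu \cdot e| \, d\sigma(\nu) = 2\ome_{d-1}$ (applied to $M_\eps\dashint|\nabla u|$), the two pieces of $\EE_\eps$ merge into
\begin{equation*}
\EE_\eps(u) \ = \ \tfrac{1}{2}\int_{S^{d-1}}\int_0^\infty K_\eps(r) \, r^{d-1}\Big[\chi_{(0,1)}(r)\dashint|\nu \cdot Du| \ - \ \gam_u(r\nu)/r\Big] dr \, d\sigma(\nu),
\end{equation*}
with the analogous representation for $\EE_0$. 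The pointwise bound makes the integrand non-negative on $\{r < 1\}$ and equal to $-K_\eps(r) r^{d-2}\gam_u(r\nu) \leq 0$ on $\{r > 1\}$; splitting $\EE_\eps = \EE_\eps^+ - \EE_\eps^-$ accordingly, $\EE_\eps^-$ is bounded uniformly in $\eps$ and $u$ by $\gam_u \leq 2$ together with \eqref{eq-molli-3}.

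The $\Gam$-limsup is immediate from this representation with the trivial recovery sequence $u_\eps = u$: monotone convergence on $\{r < 1\}$ (non-negative integrand, $K_\eps \nearrow K_0$) plus dominated convergence on $\{r > 1\}$ (domination by $2K_0(r) r^{d-2}$, integrable by \eqref{eq-molli-3}) yield $\EE_\eps(u) \to \EE_0(u)$.

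For compactness, $\EE_\eps(u_\eps) \leq C$ forces $\EE_\eps^+(u_\eps) \leq C'$. Writing $A_\eps(\nu) := \dashint|\nu \cdot Du_\eps|$ and combining $\gam_{u_\eps}(r\nu) \leq rA_\eps(\nu)$ with $\gam_{u_\eps}(r\nu) \leq 2$, one obtains $A_\eps(\nu) - \gam_{u_\eps}(r\nu)/r \geq A_\eps(\nu)/2$ whenever $r \geq 4/A_\eps(\nu)$. Fixing any $\tau \in (0,1)$ and $\eps_0 > 0$, for $\eps \leq \eps_0$ and $\nu$ with $A_\eps(\nu) \geq 4/\tau$ this delivers
\begin{equation*}
\int_0^1 K_\eps(r) r^{d-1}\big(A_\eps(\nu) - \gam_{u_\eps}(r\nu)/r\big) dr \ \geq \ c_\tau A_\eps(\nu), \qquad c_\tau := \tfrac{1}{2}\int_\tau^1 K_{\eps_0}(r) r^{d-1} dr \ > \ 0,
\end{equation*}
with $c_\tau$ independent of $\eps$, $\nu$. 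Integration over $S^{d-1}$ controls $2\ome_{d-1}\dashint|\nabla u_\eps| = \int_{S^{d-1}} A_\eps \, d\sigma$ uniformly, and standard BV compactness extracts $u_\eps \to u$ in $L^1$ along a subsequence.

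The delicate step, which simultaneously delivers the strict perimeter convergence $\dashint|\nabla u_\eps| \to \dashint|\nabla u|$ and the $\Gam$-liminf, exploits the divergence \eqref{eq-molli-1}. From $u_\eps \to u$ in $L^1$ one has $\gam_{u_\eps} \to \gam_u$ uniformly on $\Rd$ (triangle inequality, with modulus $2\|u_\eps - u\|_{L^1}/|\Td|$), while BV lower semicontinuity of directional seminorms gives $\liminf_\eps A_\eps(\nu) \geq A(\nu) := \dashint|\nu \cdot Du|$ pointwise in $\nu$. Applying Fatou to the non-negative integrand on $\{r < 1\}$, with $K_\eps \nearrow K_0$, produces
\begin{equation*}
2\liminf_\eps \EE_\eps^+(u_\eps) \ \geq \ \int_{S^{d-1}}\int_0^1 K_0(r) r^{d-1}\Big[\liminf_\eps A_\eps(\nu) - \gam_u(r\nu)/r\Big] dr\, d\sigma(\nu).
\end{equation*}
Here is the key new observation: by \eqref{eq-molli-1}, $\int_0^1 K_0(r) r^{d-1} dr = +\infty$, so any strict inequality $\liminf A_\eps > A$ on a set of positive $d\sigma$-measure would render the right-hand side $+\infty$, contradicting the uniform bound $\EE_\eps^+(u_\eps) \leq C'$. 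Consequently $\liminf_\eps A_\eps(\nu) = A(\nu)$ a.e.\ on $S^{d-1}$; along a further subsequence $A_\eps \to A$ pointwise a.e., and DCT (using $A_\eps \leq \dashint|\nabla u_\eps| \leq C''$) yields $\dashint|\nabla u_\eps| \to \dashint|\nabla u|$. The same Fatou bound then reads $\liminf \EE_\eps^+(u_\eps) \geq \EE_0^+(u)$, and DCT on $\{r > 1\}$ (domination by $2K_0 r^{d-2}$, $\gam_{u_\eps} \to \gam_u$) gives $\EE_\eps^-(u_\eps) \to \EE_0^-(u)$, so $\liminf \EE_\eps(u_\eps) \geq \EE_0(u)$. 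The principal obstacle is exactly this final step: recognizing that the non-integrability \eqref{eq-molli-1} is what upgrades weak BV lower semicontinuity to pointwise-a.e.\ convergence of all directional seminorms, which in turn drives the strict convergence of perimeters.
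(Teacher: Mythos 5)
Your representation of $\EE_\eps$, the sign-based splitting $\EE_\eps^+-\EE_\eps^-$, the uniform bound on $\EE_\eps^-$, the $BV$ bound, the limsup inequality via monotone/dominated convergence, and the $\Gamma$-liminf inequality are all correct and run parallel to the paper's argument; the only structural difference is that you keep the direction-resolved quantities $A_\eps(\nu)=\dashint|\nu\cdot Du_\eps|$ and $\gam_{u_\eps}(r\nu)$, whereas the paper first averages over $\S^{d-1}$ and works with $c_{u_\eps}(r)$ and the single scalar $c_{u_\eps}'(0)$. That extra resolution is exactly where the one genuine gap sits. Your Fatou/non-integrability argument does yield $\liminf_\eps A_\eps(\nu)=A(\nu)$ for a.e.\ $\nu$, but the next step --- ``along a further subsequence $A_\eps\to A$ pointwise a.e., and DCT yields $\dashint|\nabla u_\eps|\to\dashint|\nabla u|$'' --- does not follow: the subsequence realizing the liminf depends on $\nu$, and pointwise control of $\liminf_\eps A_\eps$ gives no control of $\limsup_\eps\int_{\S^{d-1}}A_\eps\,d\sig$. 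Concretely, if $A_\eps=A+\chi_{E_\eps}$ with $\sig(E_\eps)=\tfrac12\sig(\S^{d-1})$ and every $\nu$ lying outside infinitely many $E_\eps$, then $\liminf_\eps A_\eps=A$ everywhere, yet by reverse Fatou for sets $\sig(\limsup_j E_{\eps_j})\geq\tfrac12\sig(\S^{d-1})$ for \emph{every} subsequence, so no subsequence converges a.e., and $\int A_\eps\,d\sig$ stays a fixed positive distance above $\int A\,d\sig$. Since your argument invokes only these abstract facts, it does not close the strict perimeter convergence in part (i).

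The gap is repairable in two ways. (a) Integrate over $\S^{d-1}$ \emph{before} passing to the limit: with $a_\eps:=\int_{\S^{d-1}}A_\eps\,d\sig=2\ome_{d-1}\dashint|\nabla u_\eps|$, your slicing bound integrated in $\nu$ gives $\int_{\S^{d-1}}\gam_{u_\eps}(r\nu)\,d\sig(\nu)\leq r\,a_\eps$, so the $r$-integrand of $2\EE_\eps^+$ remains nonnegative after the $\nu$-integration; Bolzano--Weierstrass on the bounded real sequence $a_\eps$, Fatou in $r$ alone, and $\int_0^1K_0(r)r^{d-1}\,dr=\infty$ force every subsequential limit of $a_\eps$ to equal $2\ome_{d-1}\dashint|\nabla u|$. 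This is precisely the paper's route via $c'_{u_\eps}(0)$, and it is why the paper never meets the measure-theoretic issue. (b) Alternatively, note $|A_\eps(\nu)-A_\eps(\nu')|\leq|\nu-\nu'|\,\dashint|\nabla u_\eps|\leq C|\nu-\nu'|$ by your $BV$ bound, so $(A_\eps)$ is equibounded and equi-Lipschitz; Arzel\`a--Ascoli gives a uniformly convergent subsequence $A_{\eps_j}\to\tilde A\geq A$, your Fatou argument identifies $\tilde A=A$ a.e.\ and hence everywhere by continuity, and the subsequence principle upgrades $\int A_{\eps_j}d\sig\to\int A\,d\sig$ to the full sequence. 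Note also that the $\Gamma$-liminf inequality itself only needs $\liminf_\eps A_\eps\geq A$, so it is unaffected; only the perimeter convergence claim requires the repair.
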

In fact, statement (i) of Theorem \ref{thm-gamma} is a consequence of the fact
that we have a uniform $BV$ estimate for our family of energies \eqref{E-eps} as
stated in Proposition \ref{prp-gamma}. The $\Gamma$-convergence result
demonstrates that the functional $\EE_0$, given in \eqref{E-0}, is the
second order of the asymptotic expansion for the nonlocal perimeter functional
and quantifies asymptotically the error in the approximation
\eqref{first-order}.

\medskip

A key observation for our analysis is that both interfacial energy and nonlocal
energy can be expressed solely in terms of the 
autocorrelation function 
\begin{align} \notag %
  C_u(z) \ := \ 
  \dashint_{\Td} u(x+z) u(x)\ dx .
\end{align}
As a result the energies $\EE_\eps$ for $\eps\geq 0$ can be written in terms of the autocorrelation functions
\begin{align}\label{eq:auto_energy}
\EE_\eps(u)=\int_{\R^d} \frac{C_u(z)-C_u(0)-z\cdot \nabla C_u(0)\chi_{B_1}(z)}{|z|} K_\eps(z) \ dz
\end{align}
for $\eps\geq 0$, cf. Proposition \ref{prp-energy-auto} for the radially
symmetrized expression. We note that the new formulated energy has a simpler
structure: it is linear in the space of autocorrelation functions. This leads to
easier proofs for the $\Gamma$-convergence for more general kernels compared
with the existing literature, e.g. \cite{MS19, CN20}. We note that the method of
autocorrelation function can also be used for a simpler proof of D\'avila's
result \cite{Da02}, see Appendix \ref{sec:davila}. We have chosen a periodic
  setting, but our methods can also be used to derive corresponding results for
  the corresponding full space problem.

\medskip

While autocorrelation functions are natural and often used tools in physics and
stochastic geometry, they seem not have been used in the context of nonlocal
isoperimetric problems before. We note that the idea of linearization of the
problem by a change of configuration space also appears in the formulation of
quantum field theory as well as in the theory of optimal transport. In both
cases the problem gets linear but the configuration space becomes very complex.

\medskip

We next consider the minimization problem for energies $\EE_\eps$ with
prescribed volume fraction, i.e.  for some fixed $\lam \in [0,1]$ we assume
\begin{align} \label{mass-constraint} 
  \dashint_\Td u \ dx \ = \ \lam.
\end{align}
We first note that for any $\eps \geq 0$, the minimization problem has a
solution: 
\begin{proposition}[Existence of minimizers] \text{}
  \label{prp-minex}
  For each $\lam \in [0,1]$ and each $\eps \geq 0$ there exists a
    minimizer $u$ of $\EE_\eps$ in the class of functions $u\in BV(\Td;\{0,1\})$
    which satisfy \eqref{mass-constraint}.
  \end{proposition}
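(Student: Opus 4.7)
The plan is to handle the cases $\eps > 0$ and $\eps = 0$ separately: for $\eps > 0$ via the direct method of the calculus of variations, and for $\eps = 0$ by taking a diagonal limit along $\eps_k \searrow 0$ of the resulting minimizers and invoking Theorem~\ref{thm-gamma}.

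For $\eps > 0$ I would first observe that the admissible class is nonempty and that $\EE_\eps$ is bounded below on it: splitting the nonlocal integral at $|z| = 1$, the BBM--type estimate $\dashint |u(x+z) - u(x)|\,dx \le |z|\dashint \bigl|\nabla u \cdot \tfrac{z}{|z|}\bigr|\,dx$ on the inner region together with the trivial bound $|u(x+z) - u(x)| \le 1$ on the outer one yields
\[
\EE_\eps(u) \;\ge\; -\tfrac{\ome_{d-1}}{2}\int_1^\infty K_\eps(r)r^{d-2}\,dr \;>\; -\infty
\]
by monotonicity $K_\eps \le K_0$ and \eqref{eq-molli-3}. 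For a minimizing sequence $u_n$, Proposition~\ref{prp-gamma} (applied at fixed $\eps$) provides $\int|\nabla u_n| \le C(\eps)$, so BV--compactness yields $u_n \to u^* \in BV(\Td;\{0,1\})$ in $L^1$ along a subsequence, and the volume constraint passes to the limit. Lower semicontinuity of $\EE_\eps = M_\eps \dashint |\nabla u|\,dx - \mathcal{N}_\eps(u)$ then closes the argument: the local part is classically lsc, while the nonlocal part is in fact $L^1$--continuous once it is rewritten via the autocorrelation identity $\dashint |u(x+z) - u(x)|\,dx = 2(\lam - C_u(z))$, since $C_{u_n} \to C_{u^*}$ uniformly (from $\|C_{u_n} - C_{u^*}\|_\infty \le 2\|u_n - u^*\|_{L^1}$); for the non-truncated kernel \eqref{exm-E2}, the small-$|z|$ contribution is absorbed using the bound $\lam - C_u(z) \le \tfrac{|z|}{2}\dashint \bigl|\nabla u \cdot \tfrac{z}{|z|}\bigr|\,dx$ together with the uniform BV bound on $(u_n)$.

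For $\eps = 0$ I would pick $\eps_k \to 0$ and let $u_k$ be a minimizer of $\EE_{\eps_k}$ subject to \eqref{mass-constraint}, whose existence follows from the previous step. Given any admissible competitor $v \in BV(\Td;\{0,1\})$ with $\dashint v = \lam$, the autocorrelation representation \eqref{eq:auto_energy}, combined with the monotone convergence $K_{\eps_k} \nearrow K_0$ applied separately to the non-negative part of the integrand on $|z|\le 1$ and the non-positive one on $|z|>1$, gives $\EE_{\eps_k}(v) \to \EE_0(v)$; hence the constant sequence $v_k \equiv v$ already serves as a mass-constrained recovery sequence. Consequently $\EE_{\eps_k}(u_k) \le \EE_{\eps_k}(v)$ is uniformly bounded, so by Theorem~\ref{thm-gamma}(i) a subsequence satisfies $u_k \to u^* \in BV(\Td;\{0,1\})$ in $L^1$ with $\dashint u^* = \lam$, and the $\Gamma$-liminf inequality yields $\EE_0(u^*) \le \liminf_k \EE_{\eps_k}(u_k) \le \lim_k \EE_{\eps_k}(v) = \EE_0(v)$ for every admissible $v$, so $u^*$ is a minimizer of $\EE_0$.

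The main obstacle I anticipate is the $L^1$-continuity of the nonlocal term for the non-truncated kernel \eqref{exm-E2}, where $K_\eps(z)/|z|$ fails to be integrable near the origin: naive dominated convergence in $u$ does not apply and one must exploit both the uniform BV bound (via Proposition~\ref{prp-gamma}) and the Lipschitz estimate $\lam - C_u(z) = O(|z|)$ to absorb the singularity. A convenient feature of the $\eps = 0$ step, on the other hand, is that the constant sequence $v_k \equiv v$ can play the role of a $\Gamma$-recovery sequence, which sidesteps the usual difficulty of enforcing the volume constraint along a recovery sequence.
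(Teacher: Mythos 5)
Your argument is correct in substance, but it takes a genuinely different route from the paper's, which treats all $\eps \geq 0$ in a single stroke by the direct method: the paper uses the representation \eqref{eq-EE1-auto}, observes that the integrand is nonnegative on $(0,1)$ and nonpositive on $(1,\infty)$, and, after extracting a minimizing sequence with $u_k \to u$ in $L^1$ \emph{and} $\|\nabla u_k\| \to \|\nabla u\|$ (so that $c_{u_k} \to c_u$ uniformly and $c'_{u_k}(0) \to c'_u(0)$), gets lower semicontinuity from Fatou's lemma on $(0,1)$ and dominated convergence on $(1,\infty)$. That argument never separates the perimeter from the nonlocal term, so it is insensitive to whether $M_\eps$ is finite and the case $\eps = 0$ needs no special treatment. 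Your case split is forced precisely by writing $\EE_\eps = M_\eps \dashint |\nabla u| - \mathcal{N}_\eps(u)$ and proving \emph{continuity} of $\mathcal{N}_\eps$, a decomposition that degenerates to ``$\infty - \infty$'' at $\eps = 0$; your diagonal $\Gamma$-convergence argument for $\eps = 0$ is a clean and standard way around this, made painless here because constant sequences serve as recovery sequences, so the volume constraint costs nothing. What your version buys is modularity (Theorem \ref{thm-gamma} is reused as a black box, and your continuity statement for $\mathcal{N}_\eps$ at fixed $\eps>0$ is stronger than the mere semicontinuity the paper needs); what the paper's version buys is brevity and uniformity in $\eps$.

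Two points to repair. First, a triviality: in your lower bound the outer-region constant should be $\tfrac{\sigma_{d-1}}{2}\int_1^\infty K_\eps(r) r^{d-2}\,dr$ rather than $\tfrac{\omega_{d-1}}{2}(\cdots)$, since $\int_{|z|>1} K_\eps(z)|z|^{-1}\,dz = \sigma_{d-1}\int_1^\infty K_\eps(r)r^{d-2}\,dr$. Second, and more substantively: in the $\eps = 0$ step you must exhibit at least one admissible competitor $v$ with $\EE_0(v) < \infty$. Without it, $\lim_k \EE_{\eps_k}(v)$ provides no uniform bound on $\EE_{\eps_k}(u_k)$, the hypothesis of Theorem \ref{thm-gamma}(i) fails, and the compactness step collapses (Remark \ref{rmk-infinite_energy} shows that finiteness of $\EE_0$ is a genuine restriction, not automatic for $BV$ sets). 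The paper supplies such a competitor via stripe configurations: by Lemma \ref{lem-ball-cu}(ii) the symmetrized autocorrelation of a stripe is affine near $r=0$, so the singular part of the integrand in \eqref{eq-EE1-auto} vanishes identically near the origin and the energy is finite for every $\lam \in (0,1)$; the constants $u \equiv 0$, $u \equiv 1$ handle $\lam \in \{0,1\}$. With that one sentence added, your proof is complete.
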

The proof for Proposition \ref{prp-minex} is based on the uniform $BV$--bound in
    Proposition \ref{prp-gamma}. Next we note that the energy $\EE_\eps(\chi_\Ome)$
  only depends on the boundary $\p \Ome$  for any set $\Ome \SUS
  \Rd$. In fact, this can be seen from 
  \begin{align} \label{E-is-sym}%
    \EE_\eps(u) \ = \ \EE_\eps(1-u) \qquad \FA{\eps \geq 0,}
  \end{align}
 which follows from the symmetry property for the autocorrelation function
  (Proposition \ref{prp-auto1}\ref{it-Cu-complement}). In particular, the minimal energy for prescribed volume fraction is symmetric
  with respect to $\lam = \frac 12$ $\FA{\eps \geq 0}$. We thus expect that minimizers for $\EE_\eps$ in the case of the equal volume fraction $\lambda=\frac 12$ are equally distributed stripes.
  
  \medskip
  
  In the following, we discuss properties of minimizers of the limit energy for
  the special choice \eqref{exm-E1} or \eqref{exm-E2} of the kernel, i.e.
  $K_0(r) = r^{-d}$, and the corresponding energy is denoted by
  $\EE_0^{(1)}$. We first prove that when the volume is sufficiently small, the
  unique minimizer (up to translation) is given by a single ball:
  \begin{theorem}[Ball as minimizer for small mass] \text{}
    \label{thm-ball}
 Let $K_0(r)=r^{-d}$. Then there exists
      $m_0=m_0(d)>0$, such that if
\begin{align}
      \lam \ \leq \ \min \Big \{ \frac{m_0}{|\Td|}, \frac{\ome_d}{4^d} \Big \},
  \end{align}
  the unique, up to translation, minimizer $u$ of $\EE_0^{(1)}$ in
  $BV(\Td;\{0,1\})$ with constraint \eqref{mass-constraint} is a single ball.
\end{theorem}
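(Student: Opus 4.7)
The plan is to combine a localization step, a rewriting of the energy in terms of the autocorrelation function, and a quantitative isoperimetric comparison with the ball. Since the constraint $\lambda \leq \omega_d/4^d$ forces the ball $B_r$ with $|B_r|/|\Td|=\lam$ to have radius at most $\ell/4$, I would first show that any minimizer $u=\chi_\Ome$ can be arranged, after translation, to satisfy $\Ome \subset B_{\ell/2}$ inside a fixed fundamental cell of $\Td$. The point is that the nonlocal integrand $K_0(|z|)/|z|=|z|^{-d-1}$ is radial and strictly decreasing in $|z|$, so by the monotone convergence / slicing argument built from $C_u$ one can show that translating two widely separated components of $\Ome$ so that they merge inside a common coordinate ball of radius $\ell/4$ strictly decreases $\EE_0^{(1)}$ while preserving mass and the perimeter; thus it suffices to consider $\Ome$ compactly supported in a single ball $\SUS \R^d$, and the periodic images can be ignored.

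Second, I would pass to the autocorrelation representation \eqref{eq:auto_energy}: with $D:=\diam(\Ome)$ one has $C_u(z)=0$ for $|z|>D$, so the integral splits as
\[
  \EE_0^{(1)}(u) \ = \ \int_{|z|\leq D} \frac{C_u(z)-C_u(0)-z\cdot\nabla C_u(0)\chi_{B_1}(z)}{|z|}\,|z|^{-d}\,dz
  \ - \ C_u(0)\int_{|z|> D} \frac{|z|^{-d}}{|z|}\,dz,
\]
the second piece (together with the vanishing of the odd part under symmetric integration on $\{D<|z|\le 1\}$) being a constant depending only on $\lam$. By the dimensional scaling of \S\,\eqref{E-0}, writing $\Ome=r\cdot\t\Ome$ with $|\t\Ome|=\ome_d$, one finds that the first piece takes the form $r^{d-1}\,\Phi(\t\Ome)$, where $\Phi$ is a shape functional with $\Phi(B_1)$ finite and which is invariant under translation; the key observation is that $\Phi$ is of the form $c_d\, P(\t\Ome) + R(\t\Ome)$, with $R$ a nonlocal remainder that vanishes on the ball.

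Third, I would compare $\Phi(\t\Ome)$ with $\Phi(B_1)$. The perimeter part is handled by the sharp isoperimetric inequality, together with its quantitative (Fuglede/Fusco--Maggi--Pratelli type) refinement, which yields $c_d\, P(\t\Ome) - c_d\, P(B_1) \geq \kappa_d\, \alpha(\t\Ome)^2$, where $\alpha$ denotes the Fraenkel asymmetry. For the nonlocal remainder $R$, the radial and radially nonincreasing nature of $K_0(|z|)/|z|$, applied to the autocorrelation $C_{\t\Ome}$ via a Riesz/layer-cake rearrangement, allows me to estimate $|R(\t\Ome) - R(B_1)|$ by the same $\alpha(\t\Ome)^2$ with a constant that is independent of $r$. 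The crucial smallness requirement $\lam \le m_0/|\Td|$, equivalently $r \le r_0(d)$, then enters as the scaling factor $r^{d-1}$ in front of the perimeter deficit versus the scaling in front of the nonlocal remainder: for $r$ small the perimeter term strictly dominates, forcing $\alpha(\t\Ome)=0$, i.e.\ $\Ome$ is a translate of $B_r$.

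The main obstacle will be the third step, specifically obtaining the nonlocal stability estimate with the right scaling in $r$: naive bounds only control $R(\t\Ome)-R(B_1)$ by $\alpha(\t\Ome)$ (not $\alpha^2$), which is too weak to be absorbed into the quantitative isoperimetric deficit. Overcoming this requires exploiting the cancellation encoded in $C_u(z)-C_u(0)-z\cdot \nabla C_u(0)\chi_{B_1}$ and the radial symmetrization of the autocorrelation (as developed in Proposition \ref{prp-energy-auto}), together with a second-variation style bound around the ball using a spherical harmonic decomposition of $\p\t\Ome$, analogous to Fuglede's original stability argument. Once this is done, uniqueness up to translation follows directly from the strict inequality.
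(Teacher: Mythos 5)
Your overall philosophy (autocorrelation representation plus the quantitative isoperimetric inequality of \cite{FMP}) is the same as the paper's, but three of your steps contain genuine gaps, the first of which is a sign error that breaks the argument. By Lemma \ref{lem-interaction}, the interaction energy between essentially disjoint components is \emph{nonnegative} and equals $\frac{2}{|\Td|}\sum_e\int\int K_0(|x-y|)/|x-y|\,dx\,dy$, a quantity which \emph{decreases} as the components are moved apart, since $K_0(t)/t=t^{-d-1}$ is positive and decreasing. Hence translating widely separated components so that they merge (at fixed mass and perimeter) strictly \emph{increases} $\EE_0^{(1)}$ --- the opposite of your localization claim; this monotonicity is exactly what drives Remark \ref{rmk-infinite_energy}. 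The single ball wins only because merging lowers the \emph{perimeter}, so the competition cannot be decided by a translation argument, and the reduction to a compactly supported set in $\R^d$ is neither available this way nor needed: the paper works directly with the periodic autocorrelation function, the hypothesis $\lam\le\ome_d/4^d$ serving only to guarantee that the comparison ball has radius $\rho\le\ell/4$ so that Lemma \ref{lem-ball-cu}(i) applies.

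The scaling and stability steps also do not close. For $K_0(r)=r^{-d}$ the energy of a dilate is \emph{not} of the form $r^{d-1}\Phi(\t\Ome)$: the truncation $\chi_{B_1}(z)$ produces a logarithmic term proportional to $P(\Ome)\ln(1/r)$ (visible in \eqref{Eself-1}), and it is precisely this divergence, not a power of $r$, that lets the perimeter dominate for small volume --- if both pieces scaled like $r^{d-1}$, as you assert, nothing would dominate as $r\to0$. Moreover the nonlocal stability bound $|R(\t\Ome)-R(B_1)|\le C\alpha^2$ with $C$ independent of $r$, which you correctly flag as the main obstacle, is not only unproved (a Fuglede-type spherical-harmonics argument first requires reducing to nearly spherical minimizers via regularity theory, which you do not address) but would also compete at the \emph{same} order as the isoperimetric gain $\kappa_d\alpha^2$, so you would additionally need $C<\kappa_d$. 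The paper's Proposition \ref{prp-ball} avoids all of this with an intermediate-scale argument: starting from \eqref{eq-EE1-auto}, it discards the nonnegative integrand on $(0,\eta)$ for the minimizer $u$, controls the ball's Taylor remainder there by \eqref{tay-ball}, controls the difference of autocorrelations on $(\eta,\infty)$ by $\|u-u_0\|_{L^1(\Td)}$ via Proposition \ref{prp-auto2}(ii) together with \eqref{eq-hall}, and then chooses $\eta=\delta\rho$ so that the divergence of $X_\eta=\int_\eta^1K_0(r)r^{d-1}\,dr$ forces $\rho$ to be bounded below --- a contradiction for small volume. I recommend replacing your three steps with this mechanism.
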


Theorem \ref{thm-ball} actually holds for a slightly larger class of kernels,
cf. Proposition \ref{prp-ball}. The proof relies on the sharp quantitative
isoperimetric inequality in \cite{FMP} and a careful study of the limit energy
using the autocorrelation function. The optimality of the ball as minimizers has
been shown for related models with interfacial term and competing nonlocal
energy such as the Ohta-Kawasaki energy
\cite{KnuMur1,KnuMur2,Jul,BonCri,FFMMM,MurZal}. The difference in our model
in comparison to these previous results is that the nonlocal energy has
the same critical scaling as the perimeter, which requires a particularly
careful analysis. We note that Theorem \ref{thm-ball} is concerned with the case
of small volume instead of small volume fraction, as the smallness assumption
for $\lambda$ is not uniform in $\ell$.

  \medskip

To study the minimizers for small volume fraction, we calculate the energy
    $\EE_0^{(1)}$ for periodic stripes and lattice balls.  For this, we allow
    that the centers of the balls are arranged in an arbitrary Bravais lattice
    which requires a slightly more general formulation of our energy
    (cf. Definition \ref{defi-full}). In particular, we show that in $2d$ when
    the volume fraction is almost equal, equally distributed stripes have
    strictly smaller energy than all lattice ball configurations:
  \begin{proposition}[Ball vs. stripe patterns] %
   Let $d = 2$ and $K_0(r)=r^{-2}$. Then there is $c_0 > 0$ such
      that if $|\lam - \frac 12|<c_0$, then the energy of equally distributed
      stripes is strictly smaller than the energy of periodic ball configurations.
  \end{proposition}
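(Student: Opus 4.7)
The strategy is to exploit the autocorrelation representation \eqref{eq:auto_energy}: for $d=2$ and $K_0(r)=r^{-2}$,
\begin{equation*}
  \EE_0^{(1)}(u) \;=\; \int_{\R^2} \frac{C_u(z) - C_u(0) - z\cdot \nabla C_u(0)\,\chi_{B_1}(z)}{|z|^3}\,dz,
\end{equation*}
where $z\cdot\nabla C_u(0)$ is interpreted as $|z|$ times the one-sided directional derivative of $C_u$ at the origin. The plan is to evaluate both sides of the claimed inequality via the autocorrelation, compare them at $\lambda=\tfrac12$, and extend by continuity in $\lambda$.

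\textbf{Step 1: Stripe energy.} For the equally distributed stripe $u_\lambda^{\mathrm{str}}(x):=\chi_{[0,\lambda\ell]}(x_1)$ on $\Td$, the autocorrelation $C_{u_\lambda^{\mathrm{str}}}(z)$ depends only on $z_1$ and is $\ell$-periodic piecewise-linear; at $\lambda=\tfrac12$ it equals $\tfrac12-d(z_1/\ell,\Z)$. Integrating out $z_2$ via $\int_\R(z_1^2+z_2^2)^{-3/2}\,dz_2=2/z_1^2$ reduces the energy to an explicit one-dimensional integral. A key simplification is that for $|z_1|\le\ell/2$ the autocorrelation is \emph{affine} in $z_1$, so the integrand vanishes identically inside $B_1$; only the long-range contributions from $\{|z|>1,\,|z_1|\le\ell/2\}$ and $\{|z_1|>\ell/2\}$ enter, producing a closed form $F_{\mathrm{str}}(\lambda,\ell)$.

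\textbf{Step 2: Ball-lattice energy.} For $u^{\mathrm{ball}}_{\Lambda,r}:=\sum_{m\in\Lambda}\chi_{B_r(m)}$ (disjoint balls on a Bravais lattice $\Lambda$ with fundamental cell $Q_\Lambda$), the autocorrelation equals $C_{u^{\mathrm{ball}}}(z)=|Q_\Lambda|^{-1}\sum_{m\in\Lambda}c_r(z-m)$, where $c_r(z)=|B_r\cap B_r(-z)|$ is the radial disk-overlap function admitting the expansion $c_r(z)=\pi r^2-2r|z|+|z|^3/(12r)+O(|z|^5/r^3)$ for $|z|<2r$. Consequently, the integrand inside $B_1$ is $1/(12r|Q_\Lambda|)+O(|z|^2)$, producing a strictly positive contribution to $\EE_0^{(1)}$ \emph{which is absent for stripes}. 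The outside-$B_1$ contribution splits into single-ball short-range and lattice-periodic long-range pieces that can each be computed in polar coordinates (with the help of Poisson summation for the latter).

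\textbf{Step 3: Comparison and continuity.} At $\lambda=\tfrac12$ the two configurations share the same volume fraction, so the formal perimeter contributions match and $F_{\mathrm{str}}(\tfrac12,\ell)<F_{\mathrm{ball}}(\Lambda,r)$ becomes an explicit inequality between two integrals. It should follow from (a) the positive inside-$B_1$ ball contribution absent for stripes, and (b) the anisotropy of the stripe autocorrelation -- concentrated in a single direction -- yielding a more favorable long-range contribution than the isotropic ball profile once the common volume-fraction constraint is imposed. Both $F_{\mathrm{str}}(\lambda,\ell)$ and $\inf_{(\Lambda,r)}F_{\mathrm{ball}}(\Lambda,r)$ (infimum over lattices with $\pi r^2=\lambda|Q_\Lambda|$) depend continuously on $\lambda$, so the strict inequality at $\lambda=\tfrac12$ persists on a neighborhood $|\lambda-\tfrac12|<c_0$.

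\textbf{Main obstacle.} The principal difficulty is Step 3: one must establish a lower bound on $F_{\mathrm{ball}}(\Lambda,r)$ that is uniform in $\Lambda$ and strictly exceeds $F_{\mathrm{str}}(\tfrac12,\ell)$. Because of the critical scaling of $K_0(r)=r^{-2}$, the leading contribution to both energies involves a logarithmic factor, and identifying the strict inequality requires careful bookkeeping at subleading order. The symmetry \eqref{E-is-sym} reduces the analysis to $\lambda\le\tfrac12$, and a variational reduction among Bravais lattices (exploiting the radial symmetry of $c_r$ to compare against canonical lattices) should be the most efficient route.
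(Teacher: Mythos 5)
Your framework (autocorrelation representation, explicit stripe energy, ball energy split into a single-ball part plus a lattice part, continuity in $\lam$ at the end) is the same as the paper's, and Steps 1 and 2 are sound as far as they go: the stripe integrand does vanish at short range because $c_u$ is affine near $0$ (Lemma \ref{lem-ball-cu}(ii)), and your expansion of the disk overlap function is correct. The gap is that Step 3, which you yourself flag as the main obstacle, is where the entire proof lives, and the mechanism you propose there is not the right one. The positive short-range contribution of order $\tfrac{1}{12r}$ and the ``anisotropy'' of the stripe autocorrelation do not decide the comparison: carrying out the self-energy computations (Lemmas \ref{lem-stripes} and \ref{lem-small_disk}) one finds that at $\lam=\tfrac12$ the optimal single-ball self-energy per unit area is $-8/e$, strictly \emph{more} negative than the optimal stripe energy $e_S(\tfrac12)=-2e/\pi$. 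So on every term you isolate, balls still win. What tips the balance at $\lam=\tfrac12$ is the inter-ball interaction energy, which at this volume fraction is no longer of lower order (as it is in part (i) of the proposition), and your plan never bounds this term from below.

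That lower bound must moreover be uniform over all Bravais lattices of the given covolume, and this cannot be obtained by the soft considerations you list. The paper gets it from the explicit Appell-series formula for the two-ball interaction (Lemma \ref{lem-inter}), which yields $I_{\rm int}(\Lam,\rho)\ge \frac{|B_\rho|^2}{a^{5}}\sum_{e\in\Lam_0\setminus\{0\}}|e|^{-3}$, combined with Rankin's theorem that the lattice sum $\zeta(\Lam_0)=\sum_{e\in\Lam_0\setminus\{0\}}|e|^{-3}$ is minimized among unit-covolume planar Bravais lattices by the triangular lattice, an explicit numerical lower bound for $\zeta(H_{nor})$, and a one-variable minimization in $\rho$. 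Since the two optimal energies differ only by a modest margin, the comparison is quantitatively delicate and an input of this type is indispensable; ``a variational reduction among Bravais lattices'' is a wish, not an argument. The concluding continuity argument in $\lam$ is fine and matches the paper, but it only transports a strict inequality that your proposal does not establish.
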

  The precise statement and the proofs are given in Section
  \ref{sec-stripes-balls}.

\medskip

\textbf{Previous literature and related models.} We note that isoperimetric
problems have been extensively researched in the mathematical community. A
prototype model which has been investigated is the sharp--interface
Ohta-Kawasaki energy, where the system is described by a sharp interface term
together with a nonlocal Coulomb interaction term (or more general Riesz
interaction term), see e.g.
\cite{AceFusMor,AlbertiChoksiOtto-2009,ChoksiPeletier-2010,ChoksiPeletier-2011,CicSpa,Cri15,GolMurSerI,GolMurSerII,JulPis,MorSte}. However,
our model is different from these models, as the nonlocal term asymptotically
has the same (or higher) scaling as the interfacial energy.

\medskip

When $u\in BV(\R^2; \{0,1\})$ and $K_\eps$ is defined in \eqref{exm-E1}, the energy has
been studied by Muratov and Simon in \cite{MS19}, where the authors show among
other results the $\Gam$-convergence of the energy. The $\Gam$-convergence for the
energies \eqref{exm-E2} is studied by Cesaroni and Novaga \cite{CN20}.  Our
model includes the models considered in both \cite{MS19} and \cite{CN20} (in the
periodic setting). We generalize some results of these papers, however, with a
different strategy of proofs and in a more general setting. Moreover, we
characterize the minimizers of the limit energy when the volume is small, and
show that in $2d$ stripes have strictly smaller
energy than lattice of balls if the volume fraction is close to $1/2$. These
have not been considered in \cite{MS19, CN20}.

\medskip



  \textbf{Structure of paper.} In Section \ref{sec-prelim} we introduce the
  autocorrelation functions, collect and prove some of their basic properties. We write the energy in terms of the autocorrelation function and use it to explore some properties of the energy. We also
  derive different formulations of the energy. In Section \ref{sec-proofs}, we
  give the proof of Theorem \ref{thm-gamma} and Theorem \ref{thm-ball}. The stripes and balls configurations are studied in Section \ref{sec-stripes-balls}. 
  
\medskip

\textbf{Notation.} Throughout the paper unless specified we denote by $C$ a
positive constant depending only on $d$.  By $\Td :=\Rd/(\ell \Z)^d$ we denote
the flat torus in $\Rd$ with side length $\ell > 0$. We repeatedly identify
functions on $\Td$ with $\Td$--periodic functions on $\R^d$ using the canonical
projection $\Pi : \R^d \to \Td$. Similarly, any set $\Ome \SUS \Td$ can be
identified with its periodic extension onto $\R^d$.

\medskip

Let $\mathcal{M}$ be the space of signed Radon measures of bounded variation on
$\Td$.  For $\mu \in \MM$ we write $\|\mu\|:=|\mu|(\Td)$ for its total
variation. For any function of bounded variation $u\in BV(\Td)$, we analogously
write $\NN{\nabla u}=|\nabla u|(\Td)$. By the structure theorem
for BV functions we have $\nabla u=\sigma |\nabla u|$ for some
$\sigma:\Td\to \S^{d-1}$. By a slight abuse of notation we sometimes write
  \begin{align}
    \int_{\Rd} \big|\ome \cdot \nabla u(x)\big|  \ dx\ %
    := \ \int_{\Rd} \big|\ome \cdot \sig(x)\big| d|\nabla u|. \ %
  \end{align}
  Given a measurable set $\Ome\subset \Td$
(or $\Ome \SUS \Rd$), the Lebesgue measure of $\Omega$ is denoted by $|\Ome|$. If $\Ome$ has
finite perimeter, the perimeter of $\Ome$ is denoted by
$P(\Ome) := \NN{\nabla\chi_{\Ome}}$. Furthermore, $\ome_{d}$ is the volume of
the unit ball in $\Rd$ and $\sig_{d-1} = d\ome_d$ is the area of the
$(d-1)$--dimensional unit sphere $\S^{d-1} \SUS \R^d$.

\section{Autocorrelation functions and energy} \label{sec-prelim}

In this section we give various different formulations for our energy, in
particular in terms of the autocorrelation function.

\subsection{Autocorrelation functions}

In this section we introduce the autocorrelation function in our setting and show some of properties of the autocorrelation function. At
the end of the section, we compute explicitly the autocorrelation function for a single ball and stripes. The autocorrelation function of a
characteristic function is defined as follows:
  \begin{definition}[Autocorrelation functions]\label{def:correlation}
    Let $\ell > 0$. For $u\in L^1(\Td, \{0,1\})$, we define the autocorrelation
    function $C_u: \Rd\to [0,\infty)$ by
  \begin{align} \label{def-Cu} %
    C_u(z) \ := \ \dashint_{\Td} u(x+z)u(x)\ dx.
  \end{align}
  Its radially symmetrized version $c_u:[0,\infty)\to [0,\infty)$ is given by
  \begin{align} \label{def-cu} %
    c_u(r) \ := \ \dashint_{\p B_r} C_u(y)\ dy \ = \ \dashint_{\mathbb{S}^{d-1}} C_u(r\ome)\ d\ome
  \end{align}
\end{definition}
We can also write the autocorrelation function \eqref{def-Cu} as
$C_u =\frac{1}{|\Td|} u* \mathcal{I}u$, where $\mathcal{I}u(x):=u(-x)$.  We note that both $C_u$
and $c_u$ are dimensionless. Moreover, they are invariant under translation and
reflection with respect to the origin.

\medskip

Autocorrelation functions are widely used in other fields such as physics and
stochastic geometry: One common use of these functions is to study spectral
properties of observed patterns in theoretical physics (e.g. astrophysics).  In
the field of stochastic geometry, the autocorrelation function is also called
covariogram or set covariance (cf. e.g. \cite{Matheron75} and \cite{AB09}). A
common question in this field is to reconstruct a set from its autocorrelation
function.  One conjecture is e.g. whether the autocorrelation function for a
convex set can characterize the set uniquely up to translation and reflection
\cite{Matheron75}.

\medskip

We note that the definition of autocorrelation function can be extended to more
general configurations:
\begin{remark}[General periodic configurations] \label{rmk-general-pf} %
  Let $\Lam \SUS \Rd$ be a periodicity cell and assume that $u : \Rd \to \R$ is
  $\Lam$--periodic, i.e. $u(x + \Lam) = u(x)$. Then the autocorrelation function
  $C_u$ can be defined by
\begin{align}\label{cu-general} 
  C_u(z) \ = \ \lim_{\ell\to \infty}\dashint_{[-\ell,\ell]^d} u(x+z)u(x) \ dx.
\end{align}
Alternatively, one can define it by replacing $\Td$ by the periodicity cell in
\eqref{def-Cu}, and it is easy to see that the two definitions
coincide. In fact, the definition \eqref{cu-general} is also well--defined for
more general functions such as almost periodic functions,
cf. eg. \cite{Corduneanu09}. The symmetrized autocorrelation function is defined
accordingly. Equation \eqref{cu-general} shows in particular that $C_u$ (and
$c_u$) do not depend on the choice of periodicity cell.
\end{remark}

\medskip

Some properties of the autocorrelation function are derived in e.g.
\cite[Prop. 11]{Galerne11}. However, otherwise we have not found many results
about analytical properties of the autocorrelation function in the mathematical
literature. We hence collect some basic properties about $C_u$ in the following
proposition:
\begin{proposition}[Properties of $C_u$] \label{prp-auto1} %
  Let $u, \t u \in L^1(\Td;\{0,1\})$. Then
  $C_u \in C^0(\Rd,[0,\infty))$, $C_u$ is $\Td$-periodic and
    $C_u(-z) = C_u(z)$. Furthermore, 
    \begin{enumerate}      
  \item\label{it-Cu-sup} $\DS C_u(0) \ = \ \NPL{C_u}{\infty}{\Rd} \ = \ \frac 1{|\Td|} \NPL{u}{1}{\Td}$
  \item\label{it-Cu-ucon}
    $\DS \NPL{C_u - C_{\t u}}{\infty}{\Rd} \ \leq \ \frac 1{|\Td|} \NPL{\t u -u}{1}{\Td}$.
  \item\label{it-Cu-one} $\DS \NPL{C_u}{1}{\Td} \ = \ \frac 1{|\Td|} \|u\|^2_{L^1(\Td)}$.
  \item\label{it-Cu-lip} $\DS |C_u(z_2)-C_u(z_1)| \ \leq \ C_u(0)-C_u(z_2-z_1)$
    \qquad %
    \FA{z_1,z_2\in \Rd.}
    \item\label{it-Cu-complement} $C_{1-u} = C_u + 1 - 2 \NPL{u}{1}{\Td}$.
  \end{enumerate}
  If $u\in BV(\Td;\{0,1\})$ then $C_u \in C^{0,1}(\Rd)$. Furthermore,
  \begin{enumerate}[resume]
  \item\label{it-Cu-BV}  $\DS \NPL{\p_iC_u}{\infty}{\Rd} \leq \ \frac 1{|\Td|} \NN{\p_i u}$,\quad\qquad \FA{i\in \{1,\cdots, d\}}.
  \item\label{it-Cu-BV2}
    $\p_{ij} C_u = - \frac 1{|\Td|} \p_i u* \mathcal{I}\p_j u \in \mathcal{M}$
    where $\mathcal{I}u(x):=u(-x)$ and
    \begin{align}
      \NN{\p_{ij} C_u} \ \leq \ \frac 1{|\Td|} \NN{\p_i u} \NN{\p_ju}
    \quad\qquad \FA{i,j\in \{1,\cdots, d\}}.
    \end{align}
  \end{enumerate}
\end{proposition}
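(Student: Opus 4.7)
The plan is to prove the items by direct manipulation of the definition \eqref{def-Cu}, frequently writing $u = \chi_A$ to exploit that $u \in \{0,1\}$. The unnumbered claims (continuity, periodicity, reflection symmetry of $C_u$) are immediate: continuity reduces to continuity of translation in $L^1(\Td)$ (quantitatively sharpened by item (ii)), $\Td$-periodicity is inherited from $u$, and $C_u(-z) = C_u(z)$ follows from the substitution $y = x + z$ in \eqref{def-Cu}.

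Items (i)--(iii) and (v) are short direct computations. For (i), $u^2 = u$ gives $C_u(0) = \dashint u\,dx = \|u\|_{L^1}/|\Td|$, and the pointwise bound $u(x+z)u(x) \leq u(x)$ gives $C_u(z) \leq C_u(0)$. For (ii), I would use the identity
\begin{align*}
u(x+z)u(x) - \tilde u(x+z)\tilde u(x) = (u-\tilde u)(x+z)\, u(x) + \tilde u(x+z)\,(u-\tilde u)(x),
\end{align*}
together with $u,\tilde u \in \{0,1\}$; item (iii) is Fubini combined with $\Td$-periodicity; item (v) is obtained by expanding $(1-u(x+z))(1-u(x))$.

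The only non-tautological step is item (iv). Writing $u = \chi_A$ we have $|\Td|\,C_u(z) = |A \cap (A-z)|$, hence
\begin{align*}
|\Td|\bigl(C_u(z_2) - C_u(z_1)\bigr) \leq |(A-z_2) \setminus (A-z_1)| = |A| - |A \cap (A-(z_2-z_1))|,
\end{align*}
where the equality comes from the substitution $y = x + z_1$ inside $(A - z_1) \cap (A - z_2)$. The right-hand side equals $|\Td|\bigl(C_u(0) - C_u(z_2-z_1)\bigr)$; swapping $z_1, z_2$ and using $C_u(-z) = C_u(z)$ yields (iv) with absolute values.

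For the $BV$ part I would use the convolution representation $C_u = \frac{1}{|\Td|}\, u * \mathcal{I}u$ with $\mathcal{I}u(x) = u(-x)$. Since $\|u\|_\infty \leq 1$, $\partial_i C_u = \frac{1}{|\Td|}(\partial_i u) * \mathcal{I} u$ is a continuous function with $\|\partial_i C_u\|_\infty \leq \|\partial_i u\|/|\Td|$, giving (vi) and the Lipschitz regularity of $C_u$. Differentiating once more in the distributional sense and using $\partial_j\,\mathcal{I} u = -\mathcal{I}\,\partial_j u$, we obtain $\partial_{ij} C_u = -\frac{1}{|\Td|}\,\partial_i u * \mathcal{I}\,\partial_j u \in \mathcal{M}$, and the total variation bound follows from the standard inequality $\|\mu * \nu\|_{\mathcal{M}(\Td)} \leq \|\mu\|\|\nu\|$ for finite Radon measures on $\Td$. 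The main obstacle is largely bookkeeping (normalizations by $|\Td|$ and keeping track of signs from $\mathcal{I}$); the only genuinely non-trivial ingredient is the set-theoretic inequality used for (iv).
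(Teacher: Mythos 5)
Your proposal is correct, and on the elementary items \ref{it-Cu-sup}--\ref{it-Cu-one}, \ref{it-Cu-complement} and the unnumbered claims it coincides with the paper's argument. The genuine differences are in \ref{it-Cu-lip}, \ref{it-Cu-BV} and \ref{it-Cu-BV2}. For \ref{it-Cu-lip} the paper simply cites Galerne, whereas you give a self-contained set-theoretic proof via $|\Td|\,C_u(z)=|A\cap(A-z)|$, the inclusion $(A\cap(A-z_2))\setminus(A-z_1)\subset(A-z_2)\setminus(A-z_1)$ and translation invariance; this is a welcome gain in self-containedness. For the $BV$ items the paper derives \ref{it-Cu-BV} from \ref{it-Cu-lip} combined with the identity $C_u(0)-C_u(z)=\tfrac12\dashint_{\Td}|u(x+z)-u(x)|\,dx\le \NN{\nabla u}\,|z|/(2|\Td|)$, and proves \ref{it-Cu-BV2} by smoothing ($\nabla u_\eps\wtos\nabla u$ in $\MM$), computing $\p_{ij}C_{u_\eps}$ classically and passing to the limit using the weak-* continuity of $(\mu,\nu)\mapsto\mu*\nu$; you instead differentiate the convolution representation $C_u=\tfrac1{|\Td|}\,u*\mathcal{I}u$ directly in the distributional sense. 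Your route is shorter and avoids the approximation step, at the price of invoking the standard facts that $\p_i(\mu*f)=\mu*\p_i f$ for $\mu\in\MM$ and $f\in BV\cap L^\infty$ and that $\NNN{\mu*\nu}{\MM}\le\NNN{\mu}{\MM}\NNN{\nu}{\MM}$ on the torus (the latter is used by the paper as well); both arguments are sound. One small caveat, which applies equally to the paper's own proof: your identity for \ref{it-Cu-ucon} produces the constant $2/|\Td|$, not the stated $1/|\Td|$, since each of the two terms is only bounded by $\NPL{u-\t u}{1}{\Td}/|\Td|$. In fact the constant $1/|\Td|$ is false in general: on the unit torus take $A=[0,\tfrac12)$, $B=[0,\tfrac12+\d)$ and $z=\tfrac12$, for which $|C_{\chi_A}(z)-C_{\chi_B}(z)|=2\d$ while $\NPL{\chi_A-\chi_B}{1}{}=\d$. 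This is a typo in the statement (the paper's displayed estimate also gives $2/|\Td|$) and is harmless elsewhere, but your write-up should state the constant it actually proves.
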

\begin{proof}
  The periodicity
  and symmetry of $C_u$ follows directly from the periodicity of $u$ and the
  change of variable $y=x-z$.

  \medskip
  
  \textit{\ref{it-Cu-sup}--\ref{it-Cu-lip}:} For \textit{\ref{it-Cu-sup}} we
  note that $|\Td|C_u(z)=|\Ome^{per}\cap (\Ome-z)|$ for
  $\Ome:=\spt u\subset \R^d$ and $\Ome^{per}=\cup_{e\in (\ell
    \Z)^d}(\Ome+e)$. Hence, clearly $C_u \in C^0(\Rd,[0,\infty))$ and takes
  the maximum at $z=0$. By the triangle inequality and since
  $\NP{\t u}{\infty}, \NP{u}{\infty} \leq 1$, for any $z\in \Rd$ we have
  \begin{align}\label{eq-Cu-ucon}
    |C_{u}(z)-C_{\t u}(z)| \ %
    &\leq \ \frac 1{|\Td|} \Big(\NPL{(u-\t u)(\cdot+z)}{1}{\Td} + \NPL{u-\t u}{1}{\Td} \Big) \\ %
    &= \ \frac 2{|\Td|} \NPL{u-\t u}{1}{\Td}.
  \end{align}

  The identity \textit{\ref{it-Cu-one}} follows directly from Fubini and since
  $u$ is periodic. Estimate \textit{\ref{it-Cu-lip}} is given in
  \cite[Prop.5]{Galerne11}.

  \medskip

  \textit{\ref{it-Cu-BV}:} By periodicity and since
  $|u(x+z)-u(x)|=|u(x+z)-u(x)|^2$ we get
    \begin{align}\label{eq-equi-0}
      C_u(0)-C_u(z) \ %
      &= \ \dashint_{\Td} u(x)^2-u(x)u(x+z) dx \\ %
     &= \ \frac{1}{2}\dashint_{\Td}|u(x+z)-u(x)| dx \ %
       \leq \ \frac{\NNN{\nabla u}{} |z|}{2|\Td|} \qquad %
       \FA{z\in \R^d.}
  \end{align}
  Together with \textit{\ref{it-Cu-lip}}, this yields \textit{\ref{it-Cu-BV}}.
  
  \medskip
  
  \textit{\ref{it-Cu-BV2}:} %
  We choose a sequence $u_\eps \in \cciL{\Td}$ with
  $\nabla u_\eps \wtos \nabla u$ in $\MM$.  For $u_\eps\in C^\infty_c(\Td)$ and
  with the notation $(\mathcal I f)(x) := f(-x)$ we have
  \begin{equation}\label{C''}
    \begin{split}
      \p_{ij} C_{u_\eps}(z)\ %
      &= \ \p_i \dashint_{\Td} \p_j u_\eps (x+z)u_\eps(x) \ dx \ %
      = \ \p_i\dashint_{\Td} \p_j u_\eps(y) u_\eps(y-z) \ dy\\
      &=- \dashint_{\Td} \p_j u_\eps (y) \p_i u_\eps (y-z)\ dy=- \frac 1{|\Td|} (\p_j u_\eps * \mathcal{I}\p_i u_\eps)(z).
    \end{split}
  \end{equation}
  We note that \eqref{C''} with $g := \p_i u_\eps$ and
  $f := \p_j u_\eps$ defines a bilinear operator $\MM \times \MM \to \MM$
  with $(g,f) \mapsto g * \II f$. Since
  $\NNN{\mu_1 * \mu_2}{\MM} \leq \NNN{\mu_1}{\MM} \NNN{\mu_2}{\MM}$, we get
  $\phi_\eps * \psi_\eps \wtos \phi * \psi$ for all
  $\phi_\eps, \psi_\eps \in \MM$ with $\phi_\eps \wtos \phi$,
  $\psi_\eps \wtos \psi$ in $\MM$. Passing to the limit in the weak formulation
  of \eqref{C''} one gets $\p_{ij}C_u\in \mathcal{M}$ and moreover
  $|\Td|\p_{ij}C_u=-\p_j u \ast \mathcal{I}\p_i u$.

  \medskip

  \textit{\ref{it-Cu-complement}:} Follows directly from the definition.
  \end{proof}
  We note some of the assertions in Proposition \ref{prp-auto1} can be
    easily generalized to the case of $u \in BV(\Td)$ in which case an
    additional norm $\NI{u}$ has to be added on the right hand side.

  \medskip

  In the following we derive properties of the symmetrized
  autocorrelation. Similarly, these properties extend to general functions
  $u \in BV(\Td)$. However, in the general case we have to replace $\NN{\nabla u}$ by
  $\NN{D_s u}$ in \ref{itc-perim} where $D_s u$ is the jump part of the
  derivative of $u$.
  \begin{proposition}[Properties of $c_u$]\label{prp-auto2} %
  Let $u, \t u \in L^1(\Td;\{0,1\})$. Then $c_u \in C^0([0,\infty))$ and $c_u$ is
  uniformly continuous. Furthermore, 
  \begin{enumerate}
  \item\label{itc-mass}  %
$ \displaystyle c_u(0) \ = \ \NNN{c_u}{L^\infty([0,\infty))} \ = \ \frac 1{|\Td|} \NPL{u}{1}{\Td}$.
\item \label{itc-ucon} $\DS \|c_u-c_{\t u}\|_{L^\infty([0,\infty))} \ %
  \leq \ \frac{1}{|\Td|}\|u-\t u\|_{L^1(\Td)}$.
\item\label{itc-diff} %
  $|c_u(r_2)-c_u(r_1)| \ %
  \leq \ c_u(0)-c_u(r_2-r_1) \qquad\qquad \FA{0\leq r_1\leq r_2<\infty}$.
  \item\label{itc-diffquot} %
    $\displaystyle \frac 1r (c_u(0)-c_u(r)) \ %
    = \ \frac{1}{2} \dashint_{\S^{d-1}} \dashint_{\Td} \frac 1r \left|
      u(x)-u(x+r\ome)\right| \ dx d\ome$ \quad \FA{r > 0}.
  \end{enumerate}
  If $u \in BV(\Td;\{ 0, 1 \})$ then $c'_u\in BV_{loc}((0,\infty))$ and
  $c'_u(0)$ exists.  Furthermore,
  \begin{enumerate}[resume]
  \item\label{itc-perim} %
    $\DS c'_u(0) \ = \ - \NNN{c_u'}{L^\infty([0,\infty))} \  \ %
   = \ - \frac{\ome_{d-1}}{\sig_{d-1}}\frac{\NN{\nabla u}}{|\Td|}$.
  \item\label{itc-asym} (Asymptotics as $r\rightarrow \infty$) When $d\geq 2$ and $r\ell\geq 1$ we have 
       \begin{align}
       \frac{1}{\ell}|c_u(r) - c_u(0)^2|  + |c'_u(r)|\leq \frac{C}{ r^{\frac{d-1}{2}}\ell^{\frac{3d-1}{2}}}\NNN{ \nabla u}{}^2.
       \end{align}
     \item\label{itc-bvloc} Let $\Ome=\spt u$ with outer unit normal $\nu$. Then
       for each $\phi\in C_c^\infty((0,\infty))$ and with $z := y-x$ we have
   \begin{align}
     \int_{0}^\infty c'_u(r)\phi'(r)\ dr \
     &= \ \frac 1{\sig_{d-1}|\Td|} \int_{\p \Ome}\int_{\p \Ome} \frac {\big(\nu(y)\cdot \frac{z}{|z|}\big)\big(\nu(x)\cdot \frac{z}{|z|}\big)}{|z|^{d-1}}  \phi(|z|) \ dy dx.
   \end{align}
  \end{enumerate}
\end{proposition}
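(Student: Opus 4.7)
The strategy is to deduce each property of the radially symmetrized autocorrelation $c_u$ from the corresponding statement about $C_u$ in Proposition \ref{prp-auto1} by averaging over spheres, complemented by a BBM-type limit for \ref{itc-perim}, a Fourier decomposition on $\Td$ for the asymptotic \ref{itc-asym}, and the coarea formula for \ref{itc-bvloc}.

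For \ref{itc-mass}--\ref{itc-diff} I would simply integrate the analogous statements of Proposition \ref{prp-auto1} over $\partial B_r$: \ref{itc-mass} follows from \ref{it-Cu-sup} and the defining identity $c_u(0)=C_u(0)$; \ref{itc-ucon} is immediate from \ref{it-Cu-ucon}; \ref{itc-diff} follows by averaging the Lipschitz-type estimate \ref{it-Cu-lip} over $\partial B_{r_2-r_1}$, which turns the right-hand side into $c_u(0)-c_u(r_2-r_1)$. Uniform continuity of $c_u$ on $[0,\infty)$ then follows from \ref{itc-diff} together with the continuity of $c_u$ at $0$. Item \ref{itc-diffquot} is the spherical mean of \eqref{eq-equi-0}, divided by $r$.

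For \ref{itc-perim} I would pass to the limit $r\to 0^+$ in \ref{itc-diffquot}: the BBM-type identity
\[
\lim_{r\to 0^+}\dashint_{\Td}\frac{|u(x+r\omega)-u(x)|}{r}\,dx \ = \ \dashint_{\Td}|\omega\cdot\nabla u|,
\]
together with Fubini and the elementary formula $\int_{\S^{d-1}}|\omega\cdot e|\,d\omega=2\omega_{d-1}$ for any $e\in\S^{d-1}$, yields the stated value of $c_u'(0)$. Dividing \ref{itc-diff} by $r_2-r_1$ and passing to the limit gives $|c_u'(r)|\le -c_u'(0)$ at every differentiability point, whence the extremality claim. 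The $BV_{\rm loc}((0,\infty))$ statement on $c_u'$ would reduce, by writing $c_u$ as a spherical mean, to the measure structure of $\partial_{ij}C_u$ provided by Proposition \ref{prp-auto1}\ref{it-Cu-BV2}.

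For the asymptotic \ref{itc-asym} I would expand the $\Td$-periodic function $C_u$ in Fourier series: since $|\Td|^{-1}\int_{\Td}C_u=c_u(0)^2$ by \ref{it-Cu-one}, we have $C_u(z)-c_u(0)^2=\sum_{k\in(2\pi\Z/\ell)^d\setminus\{0\}}\hat C_u(k)e^{ik\cdot z}$ with $\hat C_u(k)=|\hat u(k)|^2/|\Td|$. The spherical mean of $e^{ik\cdot z}$ on $\partial B_r$ is a Bessel-type kernel of order $(d-2)/2$ that decays like $(|k|r)^{-(d-1)/2}$ for $|k|r\gtrsim 1$; summing over the dual lattice and controlling $\sum_k|\hat u(k)|^2$ in terms of $\|\nabla u\|$ via Plancherel, while carefully tracking the factors of $\ell$, produces the stated bound. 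Differentiation in $r$ yields the analogous bound on $c_u'(r)$. For \ref{itc-bvloc}, I would start from the identity $\partial_{ij}C_u=-(\partial_j u\ast\mathcal I\partial_i u)/|\Td|$ given in \ref{it-Cu-BV2}, contract with a radial combination $\phi'(|z|)\,z_i z_j/|z|^2$, and invoke the coarea formula to rewrite the resulting spherical mean as a double surface integral over $\partial\Omega\times\partial\Omega$; the factors $\nu(x)\cdot z/|z|$ come from $\partial_i u=\nu_i\,|\nabla u|\mres\partial\Omega$, and $|z|^{-(d-1)}$ is the spherical-coordinates Jacobian.

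The main obstacle I expect is \ref{itc-asym}: obtaining the sharp rate $r^{-(d-1)/2}$ together with the precise $\ell$-dependence requires delicate bookkeeping of the Bessel oscillatory bounds and a careful summation over the dual lattice, in particular because the small-frequency modes and the large-frequency tail contribute in different regimes.
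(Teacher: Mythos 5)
Your outline follows the paper's proof essentially step for step: \ref{itc-mass}--\ref{itc-diffquot} by spherical averaging of Proposition \ref{prp-auto1} and of \eqref{eq-equi-0}, \ref{itc-perim} by passing to the limit in \ref{itc-diffquot} together with $\int_{\S^{d-1}}|\omega\cdot e|\,d\omega=2\omega_{d-1}$ and the bound $\|c_u'\|_\infty\le -c_u'(0)$ from \ref{itc-diff}, \ref{itc-asym} by Fourier expansion of $C_u$ and Bessel decay of the spherical means, and \ref{itc-bvloc} by contracting $\p_{ij}C_u=-\frac 1{|\Td|}\p_j u*\II\p_i u$ against $\phi'(|z|)z_iz_j/|z|^2$ and rewriting as a surface integral; this is exactly the route taken in the paper (which invokes \cite[Prop.~11]{Galerne11} for the existence of $c_u'(0)$ where you invoke a BBM-type limit, a cosmetic difference).

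The one step you should repair is the coefficient bound in \ref{itc-asym}: Plancherel gives $\sum_k|\hat u(k)|^2\sim\|u\|_{L^2}^2$, which carries no information about $\|\nabla u\|$ and no decay in $k$, so it neither produces the $\|\nabla u\|^2$ prefactor nor makes the lattice sum $\sum_{k\neq 0}|\hat C_u(k)|(r|k|)^{-(d-1)/2}$ converge. What is actually needed --- and what the paper uses --- is the \emph{pointwise} decay of the individual coefficients, obtained by integrating by parts twice against the measure $D^2C_u$ and using $\NN{D^2C_u}\le\frac 1{|\Td|}\NN{\nabla u}^2$ from Proposition \ref{prp-auto1}\ref{it-Cu-BV2}, which yields $|\Td|^2|k|^2|\alp_k|\le\NN{\nabla u}^2$ (equivalently $|k|\,|\hat u(k)|\lesssim\NN{\nabla u}$ for each single $k$). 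With that substitution your argument for \ref{itc-asym} coincides with the paper's.
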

\begin{proof}
  \textit{\ref{itc-mass}--\ref{itc-diff}:} Follows from Proposition
  \ref{prp-auto1}(i) by taking the spherical mean.

  \medskip

  \textit{\ref{itc-diffquot}:} Taking the spherical average of \eqref{eq-equi-0}
  and by \ref{itc-mass}, we get \ref{itc-diffquot}.

  \medskip
  
  \textit{\ref{itc-perim}:} %
  By \ref{itc-diffquot} and \cite[Prop. 11]{Galerne11} (noting that the identity
  also holds for periodic functions) the derivative $c'(0)$ exists and we have
  \begin{align} \label{ab-ineq} 
    - c'_{u}(0)\ %
    &\stackrel{\ref{itc-diffquot}}= \ \lim_{r\to 0+} \dashint_{\mathbb{S}^{d-1}} \dashint_{\Td} \frac 1{r}  |u(x)-u(x+r\ome)| \ dx d\ome\\
    &= \ \frac{1}{2}  \dashint_{\Td} \dashint_{\mathbb{S}^{d-1}} |\omega\cdot \nabla u(x)| \  d\ome dx \ %
    = \ \frac{\ome_{d-1}}{\sig_{d-1}}\frac{\NN{\nabla u}}{|\Td|}. %
  \end{align}
  For the last identity, we have used that
  \begin{align}\label{eq-identity}
    \int_{\mathbb{S}^{d-1}} |\omega\cdot e_1| \  d\ome \ %
    = \ \frac{2\sig_{d-2}}{d-1} \ = \ 2\omega_{d-1}.
  \end{align}
  \DET{%
    With $r = |e_1\cdot \ome|$ and with cylindrical coordinates we have(??)
    \begin{align}
      \int_{\S^{d-1}} |e_1 \cdot \ome| \ d\ome %
      = \  2 \sig_{d-2} \int_0^1 r (1-r^2)^{\frac {d-2}2} \ dr \ %
      =  \   \frac {2 \sig_{d-2}}d
      =  \   2 \ome_{d-1} \frac {d-1}d 
    \end{align}
  I think some jacobian factor is missing. One can directly use the spherical coordinates: $e_1\cdot \ome=\cos\theta$ and $d\ome_{d-1}=(\sin\theta)^{d-2} d\theta d \omega_{d-2}$ to get 
  \begin{align}
    \int_{\S^{d-1}} |e_1\cdot \ome| d\ome \ %
    = \ \sig_{d-2}\int_0^\pi |\cos\theta| (\sin\theta)^{d-2} d\theta\ %
    = \ \frac{2\sig_{d-2}}{d-1}.
  \end{align} 
  If we take the spherical average of $c_u$, then we can also call this integral
  $\gamma_d$ instead of calculating it out.}
To conclude we note that
by \ref{itc-diff} we have $\NNN{c_u'}{L^\infty([0,\infty))} \ \leq \ -c'_u(0)$.

  \medskip

  \textit{\ref{itc-asym}:} For $k \in (\frac{2\pi}{\ell}\Z)^d$, let $\alp_k$ be
  the Fourier coefficient of $C_u$. Since $C_u(x)=C_u(-x) \in \R$ \FA{x \in
    \Td}, we have $\alp_{-k} = \alp_k \in \R$ and
    \begin{align}
      \alp_k \ %
      &:= \ \dashint_{\Td} C_u(x) e^{-i k \cdot x} \ dx
        \ = \ \dashint_{\Td} C_u(x) \cos (k \cdot x) \ dx.
    \end{align} 
    Then
  \begin{align}
    C_u(x) \ &= \ \sum_{k \in (\frac{2\pi}{\ell}\Z)^d} \alpha_k e^{i k \cdot x} %
               \ = \sum_{k \in (\frac{2\pi}{\ell}\Z)^d} \alpha_k \cos(k \cdot x)      \label{F-series}.
    \end{align}
    Integrating by parts and since
    $\NN{D^2 C_u} \leq \frac 1{|\Td|}\NN{\nabla u}^2$
    (Prop. \ref{prp-auto1}\ref{it-Cu-BV2}), we get
    \begin{align} \label{alp-dec} %
      |\Td|^2 k^2 |\alp_k| \ \leq \ \NNN{\nabla u}{}^2
    \end{align}
    Thus the series \eqref{F-series} is absolutely convergent.  Then
    \begin{align}
      c_u(r) \ %
      &= \ \sum_{k} \alp_k \dashint_{\S^{d-1}} e^{i k \cdot r \ome} \ d\omega \ %
        = \sum_{k} \alp_k  \int_{0}^{\pi} \cos(|k| r \cos \theta) (\sin\theta)^{d-2}\ d\theta, \ %
    \end{align}
    since $\int_0^\pi \sin(|k|r\cos\theta)(\sin\theta)^{d-2} d\theta=0$.
    Note that $\alp_0 = c(0)^2$ and thus
    \begin{align}
      c_u(r) - c_u(0)^2 \  %
      = \ (2\pi )^{\frac d2}\sum_{k \neq 0} \alp_k J_{\frac{d-2}{2}} (r|k|) (r|k|)^{\frac{2-d}{2}}, \label{series-con}
    \end{align}
    where $J_\alpha$ are the Bessel functions of the first kind. The decay
    estimate for $|c_u(r)-c_u(0)^2|$ then follows from the estimate
    $|J_{\alp}(t) |\leq \sqrt{\frac 2{\pi t}} + \OO(\frac 1t)$ for $\alp \geq 0$
      and $t \geq 1$ (cf. \cite[10.17]{DLMF}).

      \medskip
      
      Using the recurrence relation
      $\frac{d}{dt}(t^{-\alp}J_{\alp}(t)) = -t^{-\alp} J_{\alp+1}(t)$ we obtain
      (the series in \eqref{series-con} is absolutely and uniformly
        convergent for all $r > 0$ by \eqref{alp-dec})
    \begin{align} \label{series-dercon} %
      c'_u(r) \ %
       &\lupref{series-con}= \ -(2\pi)^{\frac d2} \sum_{k \neq 0} \frac{|k|\alp_k}{(r|k|)^{\frac{d-2}{2}}} J_{\frac d2}(r|k|),
    \end{align}
    The decay estimate for \eqref{series-dercon} then follows analogously as for
    \eqref{series-con}.  This completes the proof for \ref{itc-asym}.  \DET{
      Using $\hk{|J_{\alp}(t) |\leq \sqrt{\frac 2{\pi t}} + \OO(\frac 1t)}$ and
      \eqref{alp-dec} and with $k = \frac {2\pi}{\ell} q$ we get
    \begin{align}
      |c'_u(r)| \ %
      &\leq \  \frac{(2\pi)^{\frac d2} \NNN{\nabla u}{}^2}{r^{\frac{d-1}{2}}|\Td|} \sum_{q \in \Z^d \BS \{ 0 \} } \frac 1{|k|^{\frac{d+1}{2}}} + \OO\Big(\frac 1{r^{\frac d2} } \sum_{q \in \Z^d \BS \{ 0 \} } \frac 1{|k|^{\frac{d+2}{2}}} \Big) \\ %
      &\leq \  \frac{\NNN{\nabla u}{}^2\ell^{\frac{d\hk{+}1}{2}}}{\pi r^{\frac{d-1}{2}} |\Td|} \Big( 1 + \frac C{(\ell r)^{\frac 12}}\Big) \sum_{q \in \Z^d \BS \{ 0 \} } \frac 1{|q|^{\frac{d+1}{2}}}  \\ %
      &\leq  \frac{1}{\pi r^{\frac{d-1}{2}} \ell^{\frac{d-1}{2}}} \Big( 1 + \frac C{(\ell r)^{\frac 12}}\Big) \NNN{ \nabla u}{}^2,
    \end{align}}
   
  \medskip
  
  \textit{\ref{itc-bvloc}:} For any test function
  $\phi\in C_c^\infty((0,\infty))$ we calculate
  \begin{align}
   \int_0^\infty c'_u(r) \phi'(r) \ dr   \ %
    &\ = \ - \frac 1{|\Td|} \int_0^\infty \dashint_{\mathbb{S}^{d-1}} \frac {d}{dr} \big[\ome \cdot \nabla C_u(r \ome)\big] \phi(r)\ d\ome dr \\ %
    &\ =\ -  \frac 1{\sig_{d-1}|\Td|} \int_{\Rd} \frac 1{|z|^{d+1}} \skp{z}{D^2 C_u(z) z} \phi(|z|)  \ dz,
  \end{align}
  where we have integrated by parts in the last line.  By Proposition \ref{prp-auto1}
\begin{align}
  \skp{z}{D^2 C_u z} \ = \ - \frac 1{|\Td|} (z \cdot \nabla u) * \mathcal{I}(z \cdot \nabla u) \ %
  \ \in\MM. 
\end{align}
Since $\nabla u =-\nu \HH^{d-1}\lfloor \p \Ome$ and with $z:=y-x$ we
then get
  \begin{align}
    \int_0^\infty c'_u(r) \phi'(r) \ dr  \ = \ \frac {1}{\sig_{d-1} |\Td|} \int_{\p \Ome}\int_{\p \Ome} \frac{ (\nu(y)\cdot z) (\nu(x)\cdot z)}{|z|^{d+1} }\phi(|z|)\ dydx.
  \end{align}
  For $\spt \phi \in [\d, \infty)$ the expression above is estimated by
  $C(\d) \NI{\phi}$. This shows that $c'_u\in BV_{loc}((0,\infty))$.
\end{proof}
\begin{figure}
  \centering %
  \includegraphics[width=7cm]{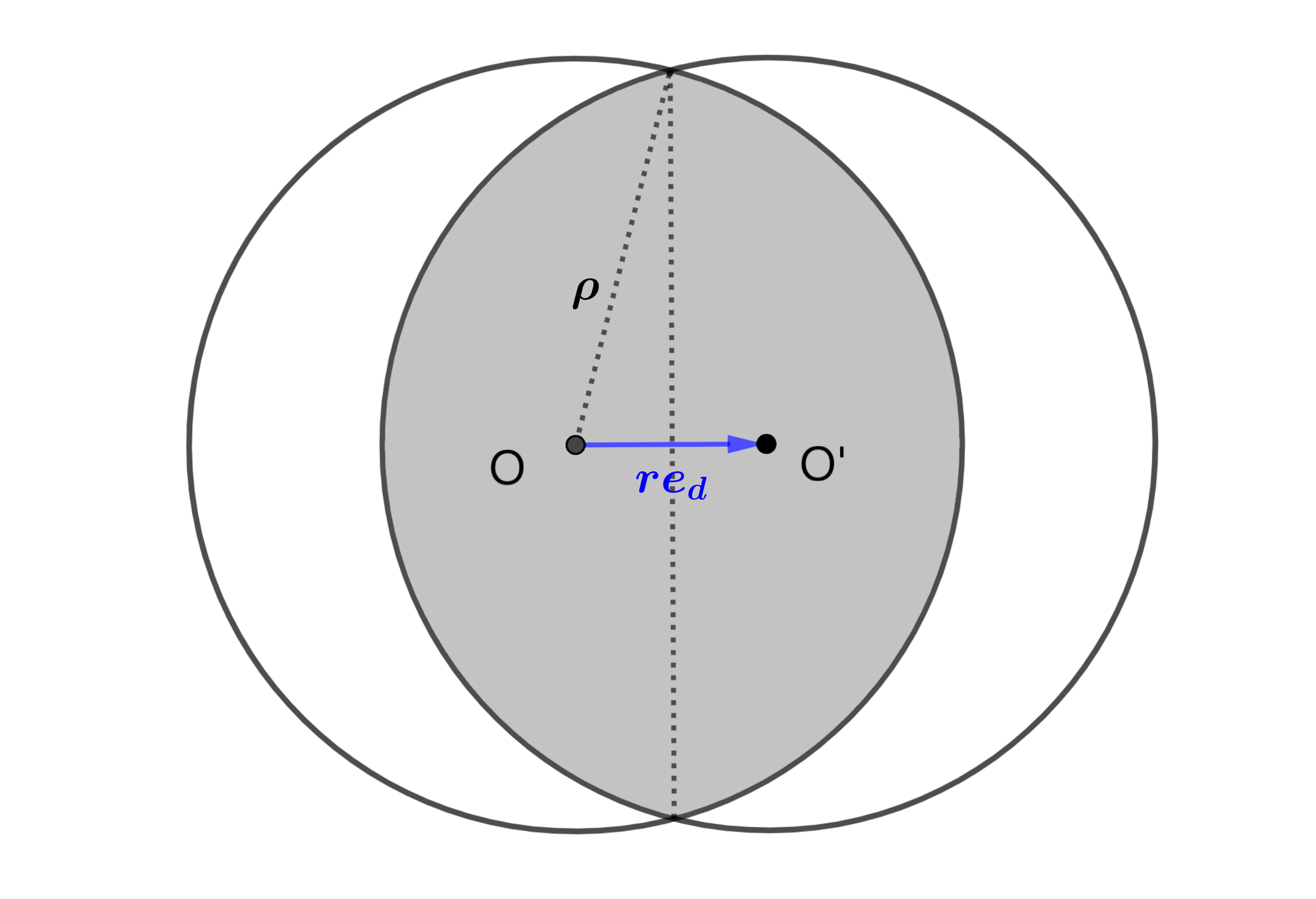}
  \caption{Illustration for autocorrelation function $C_u(r)$ for a ball.}
  \label{fig-ball}
\end{figure}
\begin{lemma}[Autocorrelation functions for balls and
  stripes] \label{lem-ball-cu} \text{} %
\begin{itemize}
\item [(i)] Let $u=\chi_{B_\rho}$ for some $\rho\in (0, \frac{\ell}{4})$. Then
  for $r\in [0,2\rho]$ we have
  \begin{align} %
    c_{u}(r) \ %
    &= \   \frac{2\ome_{d-1}}{|\Td|}  \Big(\int_{\frac r{2\rho}}^{1}  (1 - t^2)^{\frac {d-1}2} \ dt \Big) \rho^d, \label{cu-ball}
    \qquad %
  \end{align}  
\item [(ii)] Let $u=\chi_{S}$, where
  $S=\cup_{i=1}^k [a_i,b_i]\times [0,\ell]^{d-1}$ is a finite union of
  stripes. Let $d_0=\min_i|b_i-a_i|>0$. Then $c_u$ is affine linear near
    $0$, i.e.
  \begin{align}
    c_u(r) \ = \ \frac { \NPL{u}{1}{\Td}}{|\Td|} - \Big(\frac{\ome_{d-1}}{\sig_{d-1}}\frac{\NN{\nabla u}}{|\Td|} \Big)  r  \qquad %
    \text{ for } r\in (0,d_0).
  \end{align}
  \end{itemize}
\end{lemma}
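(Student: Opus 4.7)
For part (i), I would reduce the problem to the Euclidean lens volume. Writing $\Ome := B_\rho$, we have $|\Td| C_u(z) = |\Ome \cap (\Ome - z)|_{\Td}$, i.e. the measure on the torus of the overlap between $B_\rho$ and its translate $B_\rho(z)$. The hypothesis $\rho < \ell/4$ ensures $|z| \leq 2\rho < \ell/2$, and so for every nonzero lattice vector $e \in (\ell \Z)^d$ the distance from $z$ to $\ell e$ exceeds $\ell/2 > 2\rho$, hence $B_\rho(z)$ meets only the principal copy of $B_\rho(0)$. Thus $C_u(z)$ coincides with the Euclidean formula $|B_\rho \cap B_\rho(z)|/|\Td|$. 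By rotational invariance I may take $z = r e_1$, and the intersection splits at the hyperplane $\{x_1 = r/2\}$ into two congruent spherical caps. Slicing a cap by $\{x_1 = s\}$ gives a $(d{-}1)$-ball of volume $\ome_{d-1}(\rho^2 - s^2)^{(d-1)/2}$, so
\begin{align}
|B_\rho \cap B_\rho(re_1)| \ = \ 2\ome_{d-1}\int_{r/2}^{\rho} (\rho^2 - s^2)^{(d-1)/2}\ ds.
\end{align}
The substitution $s = \rho t$ produces exactly \eqref{cu-ball}. Since $C_u$ is rotationally symmetric for $|z|\leq 2\rho$, the spherical average $c_u(r)$ equals $C_u(re_1)$.

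For part (ii), the key reduction is that $u$ depends only on $x_1$: writing $u(x) = v(x_1)$ with $v \in L^1(\R/\ell\Z; \{0,1\})$ equal to $\chi_{\cup_i [a_i,b_i]}$, Fubini yields
\begin{align}
C_u(z) \ = \ \dashint_{0}^{\ell} v(x_1 + z_1) v(x_1)\ dx_1.
\end{align}
I then compute the $1$d overlap for $|z_1| < d_0$. Each factor $v(\cdot + z_1) v(\cdot)$ restricted to a fixed stripe $[a_i,b_i]$ contributes $b_i - a_i - |z_1|$, and because $|z_1| < d_0 \leq b_i - a_i$ no cross interaction between different stripes occurs (here one needs $|z_1| < \ell/2$ as well, which follows if the stripes fit in one period). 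Summing over $i$ gives
\begin{align}
C_u(z) \ = \ \frac{\NPL{u}{1}{\Td}}{|\Td|} - \frac{k}{\ell}|z_1| \qquad \text{for } |z_1| < d_0.
\end{align}

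To finish, I spherically average: for $r < d_0$ we have $|r\ome_1| < d_0$ for every $\ome \in \S^{d-1}$, so
\begin{align}
c_u(r) \ = \ \frac{\NPL{u}{1}{\Td}}{|\Td|} - \frac{k r}{\ell}\cdot \frac{1}{\sig_{d-1}}\int_{\S^{d-1}}|\ome_1|\ d\ome \ = \ \frac{\NPL{u}{1}{\Td}}{|\Td|} - \frac{2 k \ome_{d-1}}{\sig_{d-1}\,\ell}\, r,
\end{align}
using \eqref{eq-identity}. Noting that $\NN{\nabla u} = P(S) = 2k\ell^{d-1}$, so that $\frac{\ome_{d-1}}{\sig_{d-1}}\frac{\NN{\nabla u}}{|\Td|} = \frac{2k\ome_{d-1}}{\sig_{d-1}\ell}$, the formula in (ii) follows. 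The only subtle point in the whole argument is verifying the no-interference condition from periodicity in both cases, which is precisely what the hypotheses $\rho < \ell/4$ and $r < d_0$ (together with the assumption that the stripes are contained in one period) ensure; once that is in place, the rest is a direct geometric computation and an application of Proposition \ref{prp-auto2}\ref{itc-perim}.
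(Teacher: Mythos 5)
Your proof is correct and follows essentially the same route as the paper: part (i) is the identical lens-volume computation $|\Td|\,c_u(r)=|B_\rho\cap B_\rho(re_1)|$ sliced into two congruent caps with the substitution $s=\rho t$, and part (ii) reduces to the one-dimensional overlap plus the spherical average $\int_{\S^{d-1}}|\omega_1|\,d\omega=2\omega_{d-1}$ from \eqref{eq-identity}, exactly as in the paper (which phrases the same computation via $c_u(0)-c_u(r)$ and Proposition \ref{prp-auto2}\ref{itc-diffquot}, \ref{itc-perim}). One shared caveat: in (ii) the condition $|z_1|<d_0$ with $d_0$ the minimal \emph{width} prevents a stripe from sliding past itself but does not by itself exclude the translate of stripe $i$ overlapping a neighbouring stripe $j$, so strictly one also needs $|z_1|$ below the minimal \emph{gap} between stripes --- an implicit assumption equally present in the paper's own one-line argument.
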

\begin{proof}
  \textit{(i):} By invariance and using cylindrical coordinates we have
  \begin{align}
    c_{u}(r) \ %
    = \ \frac 1{|\Td|} |B_\rho\cap B_{\rho}(re_d)|
    = \ 2\ome_{d-1}  \frac{1}{|\Td|}\int_{\frac r2}^{\rho}  (\rho^2 - s^2)^{\frac {d-1}2} ds.
  \end{align}
  for $r \in [0, 2\rho]$ (see Fig. \ref{fig-ball}) and
  \begin{align}
      c'_{u}(r) \  %
     &=  \ -   \frac {\ome_{d-1}}{|\Td|} \Big( 1 - \frac{r^2}{4\rho^2} \Big)^{\frac{d-1}2}  \rho^{d-1}. \label{cus-ball}
  \end{align}

  \medskip
  
  \textit{(ii):} For given $S$ and $r\in (0,d_0)$ we have
  \begin{align}
      c_u(0)- c_u(r)=\frac{\ell^{d-1}}{|\Td|}\int_{\S^{d-1}}|r\ome\cdot e_1|\
      d\ome \stackrel{\eqref{eq-identity}}{=}
    \frac{2\ome_{d-1}\ell^{d-1}}{|\Td|}r=-c'_u(0)r.
  \end{align}
  Then the claim follows from Proposition \ref{prp-auto2}.
\end{proof}

\subsection{Properties of the energies}

In this section, we express our energies in terms of the autocorrelation function
and use this expression to explore their properties. We first note that by
Proposition \ref{prp-auto2} our energies can be expressed in terms of the
symmetrized autocorrelation function $c_u$:
\begin{proposition}[Energy in terms of $c_u$]\label{prp-energy-auto} %
  Let $d \geq 1$. Let $\eps \geq 0$ and suppose that $K_\eps, K_0$ satisfy
    \eqref{eq-molli-1}--\eqref{eq-molli-3}.  Then for any
  $u \in BV(\Td;\{ 0, 1 \})$ we have
  \begin{align}\label{eq-EE1-auto} %
    \EE_\eps(u) \ %
    &= \ \sig_{d-1} \int_0^{\infty} \left(c_u(r)-c_u(0)-r c'_u(0) \chi_{(0,1)}(r) \right) K_\eps(r) r^{d-2} \ dr.
  \end{align}
  Furthermore, there is $C=C(K_0,d)>0$ such that $\EE_\eps(u)\geq -C$ for
  $\eps\geq 0$.
\end{proposition}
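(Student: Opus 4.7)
My strategy is to rewrite the two pieces of $\EE_\eps(u)$ in terms of the symmetrized autocorrelation $c_u$ via the identities collected in Proposition \ref{prp-auto2}, and then to analyse the sign of the resulting integrand to obtain the lower bound.

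First I would pass to spherical coordinates $z = r\ome$ (with $r > 0$, $\ome \in \S^{d-1}$) in the double integral defining the nonlocal term. Recognising the inner average via Proposition \ref{prp-auto2}\ref{itc-diffquot} gives
\begin{equation*}
\int_{\S^{d-1}} \dashint_\Td \frac{|u(x+r\ome) - u(x)|}{2r} \, dx \, d\ome \ = \ \sig_{d-1} \, \frac{c_u(0) - c_u(r)}{r},
\end{equation*}
so the nonlocal term equals $\sig_{d-1} \int_0^\infty K_\eps(r) r^{d-2} (c_u(0) - c_u(r)) \, dr$. For the perimeter term I would invoke $\NN{\nabla u}/|\Td| = -(\sig_{d-1}/\ome_{d-1}) c_u'(0)$ from Proposition \ref{prp-auto2}\ref{itc-perim} together with the definition $M_\eps = \ome_{d-1} \int_0^1 K_\eps(r) r^{d-1} \, dr$, giving
\begin{equation*}
M_\eps \dashint_\Td |\nabla u| \, dx \ = \ -\sig_{d-1} \, c_u'(0) \int_0^1 K_\eps(r) r^{d-1} \, dr.
\end{equation*}
Subtracting and collecting a common factor $K_\eps(r) r^{d-2}$ yields \eqref{eq-EE1-auto}.

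For the lower bound $\EE_\eps(u) \geq -C$ I would split the radial integration at $r = 1$. On $(0,1)$, Proposition \ref{prp-auto2}\ref{itc-perim} gives $c_u'(0) = -\NNN{c_u'}{L^\infty([0,\infty))}$, so $c_u'$ is essentially bounded, $c_u$ is absolutely continuous, and $c_u'(s) \geq c_u'(0)$ for a.e.\ $s > 0$; therefore
\begin{equation*}
c_u(r) - c_u(0) - r \, c_u'(0) \ = \ \int_0^r \bigl( c_u'(s) - c_u'(0) \bigr) \, ds \ \geq \ 0,
\end{equation*}
making the integrand of \eqref{eq-EE1-auto} pointwise nonnegative on $(0,1)$. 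On $[1,\infty)$ we have $c_u(r) \geq 0$ and $c_u(0) \leq 1$ by Proposition \ref{prp-auto1}\ref{it-Cu-sup}, hence $c_u(r) - c_u(0) \geq -1$; combined with the integrability $\int_1^\infty K_0(r) r^{d-2} \, dr < \infty$ from \eqref{eq-molli-3} and the monotonicity $K_\eps \leq K_0$, this yields the uniform bound $\EE_\eps(u) \geq -\sig_{d-1} \int_1^\infty K_0(r) r^{d-2} \, dr =: -C(K_0,d)$.

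The main calculation is essentially bookkeeping once Proposition \ref{prp-auto2} is in hand; the mildly subtle point is the lower bound, where it is crucial that Proposition \ref{prp-auto2}\ref{itc-perim} identifies $c_u'(0)$ not merely as a multiple of the perimeter but also as $-\NNN{c_u'}{L^\infty}$. Without this sign information one would not see that the otherwise singular-looking subintegrand on $(0,1)$ is nonnegative, which is precisely what prevents a blowup as $M_\eps \to \infty$.
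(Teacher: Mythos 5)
Your proof is correct and follows essentially the same route as the paper's: polar coordinates plus Proposition \ref{prp-auto2}\ref{itc-diffquot} for the nonlocal term, Proposition \ref{prp-auto2}\ref{itc-perim} for the perimeter term, and the lower bound obtained by discarding the nonnegative integrand on $(0,1)$ and bounding the tail via $\NI{c_u}\leq 1$ together with \eqref{eq-molli-3}. The only cosmetic difference is that you make the nonnegativity on $(0,1)$ explicit through $c_u'(s)\geq c_u'(0)$ a.e., which the paper leaves as a citation.
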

\begin{proof}
  We first note that the integral in \eqref{eq-EE1-auto} is well-defined in
  $\R \cup \{ \pm \infty \}$: Indeed, in view of Proposition
  \ref{prp-auto2}(i)(ii) we have $c_u(r)-c_u(0)-rc'_u(0) \geq 0$ and
  $c_u(0)-c_u(r) \geq 0$ \FA{r > 0}. Thus the integrand is nonnegative in
  $(0,1)$ and negative in $(1,\infty)$. This combined with our assumptions
  \eqref{eq-molli-1}--\eqref{eq-molli-3} on the kernels $K_\eps$ and that
  $c_u\in C^{0,1}((0,\infty))$ gives that the integral \eqref{eq-EE1-auto} is
  well-defined in $\R$ for $\eps > 0$, and is well-defined in
  $\R \cup \{ + \infty \}$ for $\eps = 0$. To show the uniform lower bound we observe that
  \begin{align}
  \EE_\eps(u)&\ \geq \ -\sig_{d-1}\int_1^\infty (c_u(0)-c_u(r)) K_\eps (r) r^{d-2} \ dr \\
 & \ \geq \ -\sig_{d-1}\frac{\|u\|_{L^1(\Td)}}{|\Td|}\int_{1}^\infty K_\eps(r) r^{d-2} \ dr\geq -C,
  \end{align}
   where in the last inequality we have used the assumption \eqref{eq-molli-3}. In view of Proposition \ref{prp-auto2}(i)(ii) and using polar
  coordinates we have
    \begin{align}
      \EE_\eps(u) %
      &= \ - \sig_{d-1} c_u'(0)\int_0^1  K_\eps (r) r^{d-1} dr  \\ 
      &\qquad -  \int_0^\infty  \bigg[ \dashint_{\S^{d-1}} \frac{1}{2}\dashint_{\Td} |u(x+r\ome)-u(x)| dx \bigg]  K_\eps (r) r^{d-2}  d\ome dr.
    \end{align}
    Then \eqref{eq-EE1-auto} directly follows from Proposition
      \ref{prp-auto2}\ref{itc-diffquot}.
  \end{proof}
  The truncation at $r = 1$ is arbitrary. More generally, we have:
\begin{remark}[Different truncation] \label{rmk-truncation}%
  Let $\eps \geq 0$.  For any $r_0 > 0$ we have
  \begin{align} \label{trunc} %
    \EE_\eps(u)  \ %
    &= \ \sig_{d-1}\int_0^{\infty} \Big[c_u(r)-c_u(0)- rc'_u(0) \chi_{(0,r_0)}(r)   \Big] K_\eps(r) r^{d-2}\ dr \\
    &\qquad\qquad   +  \sig_{d-1}c_u'(0) \int_1^{r_0} K_\eps(r) r^{d-1} \ dr.
  \end{align}
\end{remark}
Although the energies are linear in terms of the autocorrelation functions, they are not linear in $u$. Indeed, they are sublinear in the following sense:
\begin{lemma}[Interaction energy]
  \label{lem-interaction}
  Let $\eps \geq 0$. Suppose that $u_1,u_2\in BV(\Td;\{0,1\})$ have
    essentially disjoint support, i.e. $u_1 + u_2 \in BV(\Td;\{0,1\})$ and 
  $\NN{D(u_1+u_2)}=\NN{Du_1}+\NN{Du_2}$. Then the interaction energy
  \begin{align}
  \II_\eps(u_1,u_2) \ %
    := \ \EE_{\eps}(u_1+u_2)-\EE_{\eps}(u_1)-\EE_{\eps}(u_2)
  \end{align}
  is non--negative and for $\Ome_i:=\spt u_i$, $i=1,2$ we have
 \begin{align}
   \II_\eps(u_1,u_2) \ %
   = \ \frac{2}{|\Td|}\sum_{e\in (\ell\Z)^d}\int_{\Ome_1+e}\int_{\Ome_2} \frac{K_\eps(|x-y|)}{|x-y|} dxdy .
 \end{align}
\end{lemma}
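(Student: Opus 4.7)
The plan is to compute $\II_\eps(u_1,u_2)$ directly from the definition \eqref{E-eps}. The disjointness hypothesis $\NN{\nabla(u_1+u_2)}=\NN{\nabla u_1}+\NN{\nabla u_2}$ makes the perimeter contribution to $\II_\eps$ vanish, so only the nonlocal term contributes.

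The key algebraic identity is $|a-b|=a+b-2ab$ for $a,b\in\{0,1\}$. Applying it to $(a,b)=((u_1+u_2)(x+z),(u_1+u_2)(x))$ (justified since the disjointness gives $u_1+u_2\in\{0,1\}$) and expanding $ab$ yields, after collecting terms,
\[
  |(u_1+u_2)(x+z)-(u_1+u_2)(x)| \ = \ |u_1(x+z)-u_1(x)|+|u_2(x+z)-u_2(x)|-2\bigl[u_1(x)u_2(x+z)+u_2(x)u_1(x+z)\bigr].
\]
Substituting into \eqref{E-eps} gives
\[
  \II_\eps(u_1,u_2) \ = \ \int_{\Rd}\frac{K_\eps(|z|)}{|z|}\dashint_{\Td}\bigl[u_1(x)u_2(x+z)+u_2(x)u_1(x+z)\bigr]\,dx\,dz,
\]
which is manifestly non-negative since $K_\eps\geq 0$. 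Equivalently, via the autocorrelation representation of Proposition~\ref{prp-energy-auto}, one has $\II_\eps=\sig_{d-1}\int_0^\infty c_{12}(r)K_\eps(r)r^{d-2}\,dr$, where $c_{12}$ is the spherical mean of the cross-correlation $\dashint_{\Td}[u_1(x+z)u_2(x)+u_2(x+z)u_1(x)]\,dx$; the terms $c_{12}(0)$ and $c_{12}'(0)$ vanish by disjointness and perimeter additivity respectively, the latter via Proposition~\ref{prp-auto2}\ref{itc-perim}.

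To reach the stated double-integral formula, I identify $u_2$ with its periodic extension $u_2(y)=\sum_{e\in(\ell\Z)^d}\chi_{\Omega_2+e}(y)$ and substitute $y=x+z$ in the first summand, producing $\frac{1}{|\Td|}\sum_e\int_{\Omega_1}\int_{\Omega_2+e}\frac{K_\eps(|y-x|)}{|y-x|}\,dy\,dx$. A change of variables $y\mapsto y-e$, relabelling $e\mapsto-e$, together with the radial symmetry of $K_\eps$ rewrites this as $\frac{1}{|\Td|}\sum_e\int_{\Omega_2}\int_{\Omega_1+e}\frac{K_\eps(|x-y|)}{|x-y|}\,dx\,dy$; the second cross term contributes the same quantity by an analogous argument, producing the factor of $2$. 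The only delicate point is the bookkeeping of the lattice sum under these changes of variables; apart from that the argument is direct.
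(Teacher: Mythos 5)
Your proposal is correct and is essentially the paper's own argument: the paper likewise reduces $\II_\eps$ to the cross-correlation $c_{u_1+u_2}-c_{u_1}-c_{u_2}$ via the representation of Proposition~\ref{prp-energy-auto} (your identity $|a-b|=a+b-2ab$ for $a,b\in\{0,1\}$ is exactly what underlies $C_u(0)-C_u(z)=\tfrac12\dashint_{\Td}|u(x+z)-u(x)|\,dx$, and the additivity of $c_u(0)$, $c_u'(0)$ plays the role of your vanishing $c_{12}(0)$, $c_{12}'(0)$), and then unfolds the periodic convolution into the lattice sum just as you do. The only caveat is that for $\eps=0$ and kernels such as \eqref{exm-E1}--\eqref{exm-E2} the definition \eqref{E-eps} is literally of the form $\infty-\infty$, so the computation must be read in the well-defined form \eqref{eq-EE1-auto} rather than ``directly from \eqref{E-eps}'' --- which is precisely your ``equivalently'' remark and is where the paper's proof starts.
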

\begin{proof}
  Let $u := u_1 + u_2$. By assumption, we have $c_u(0)=c_{u_1}(0)+c_{u_2}(0)$
  and $c'_u(0)=c'_{u_1}(0)+c'_{u_2}(0)$. Together with $u=u_1+u_2$ the assertion
  follows from the energy expression \eqref{eq-EE1-auto} and
  \begin{align}
    \II_{\eps}(u_1,u_2) \ %
    &= \ \sig_{d-1}\int_0^\infty \big( c_{u_1+u_2}(r)-c_{u_1}(r)-c_{u_2}(r) \big) K_\eps(r) r^{d-2} \ dr \\
    &=\int_{\Rd}\int_{\Td}\Big( u(x)u(x+z)
                   - \sum_{i=1,2} u_i(x)u_i(x+z) \Big)  \frac{K_\eps(|x-y|)}{|x-y|} \ dxdz.
  \end{align}
  Using the symmetry of the sum and substituting the supports of  $u_i$, we obtain the desired expression for $\II_\eps$.
\end{proof}
The space of functions with finite limit energy is quite
complicated: the condition $u \in BV(\Td, \{ 0, 1 \})$ is in general only a necessary but not
sufficient condition for a configuration to have finite energy. Below we construct a BV function with infinite energy $\EE_0^{(1)}$ using the nonnegativity of the interaction energy in Lemma \ref{lem-interaction}:
\begin{remark}[$BV$--function with infinite energy $\EE^{(1)}_0$]\label{rmk-infinite_energy}
  Let $(B_{r_k}(x_k))_{k=2}^\infty$ be a sequence of disjoint balls in $\Td\subset \R^2$
  with radius $r_k=\frac{1}{k(\ln k)^2}$. Let
  $u:=\sum_{k=2}^\infty \chi_{B_{r_k}(x_k)}$. On one hand, it is immediate from
  $\sum_{k=2}^{\infty}r_k<\infty$ that $u\in BV(\Td;\{0,1\})$. On the other
  hand, from Lemma \ref{lem-interaction} and the estimate of the energy for a
  single disk (cf. Lemma \ref{lem-small_disk}(i) below) we have
  \begin{align}
    \EE^{(1)}_0(u) \ \geq \ \sum_{k=2}^\infty \EE_0^{(1)}(u_k)\geq C\sum_{k=2}^\infty (-\ln
    r_k)r_k \ = \ +\infty.
  \end{align}
\end{remark}

We end this section by a remark on the finite energy configurations for $\EE_0^{(3)}$:
\begin{remark}[Finite $\EE^{(3)}_0$ energy configurations]
  The family of sets which have finite energy $\EE_0^{(3)}$ becomes increasingly
  smaller as $q$ increases. Indeed, as $q$ getting larger the (symmetrized)
  autocorrelation function $c_u$ needs to converge faster to its affine
  approximation at $0$ in order to get a finite energy (cf. Proposition
  \ref{prp-energy-auto}). It follows from the explicit computation of $c_u$
  (cf. Lemma \ref{lem-ball-cu}) that, if $\Ome$ is a ball and $u = \chi_\Ome$
  then $\EE_0^{(3)}(u)< \infty$ for $q<d+2$ and infinite for $q\geq d+2$.  If
  $\Ome$ consists of multiple stripes, then $\EE_0^{(3)}(u) < \infty$ for all
  $q >d$.
  \end{remark}

  \subsection{Equivalent formulations of the energy}
  
In this section, we give some further geometric
representations of the limit energy $\EE_0$.  These representations are in
particular helpful for the calculation of the energy for specific
configurations.  We note that the assumption $\NPL{K_0}{1}{\R^d} = \infty$ is used in the proof of Lemma \ref{lem-decay} below. Hence, the results
do not hold for the approximating energies $\EE_\eps$.  Throughout the section
$F, G: \R_+\rightarrow \R_+$ are defined by
\begin{align}
    F(r) \ := \ \sig_{d-1}\int_r^\infty K_0(t) t^{d-2}\ dt, \qquad
    G(r) \ := \ \left |\int_1^r F(t) \ dt \right |, \qquad
\end{align}
Note that for $K_0(r)=r^{-d}$ we have $F(r)=\sig_{d-1}r^{-1}$ and
$G(r)=\sig_{d-1}|\ln r|$.

\medskip

We start with an auxiliary lemma: 
\begin{lemma}[Asymptotics for $r \to 0$] \label{lem-decay} %
  Let $u\in BV(\Td;\{0,1\})$. Suppose that \eqref{eq-molli-1}--
  \eqref{eq-molli-3} hold. If $\EE_0(u)<\infty$ then there are sequences
  $\delta_j, \gam_j\to 0$ such that
  \begin{enumerate}
  \item $\lim_{j\rightarrow \infty} F(\delta_j)\left (c_u(\delta_j)-c_u(0)-c'_u(0)\delta_j\right)=0$,\label{crit-seq1} 
  \item $\lim_{j \to \infty} G(\gam_j) (c'_u(\gam_j) - c'_u(0)) \ = \ 0$. \label{crit-seq2}
  \end{enumerate}
\end{lemma}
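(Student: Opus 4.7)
The plan is to exploit the integral representation of $\EE_0$ from Proposition~\ref{prp-energy-auto} together with the monotonicity $c_u'(r) \geq c_u'(0)$ (which follows from Proposition~\ref{prp-auto2}\ref{itc-perim}), and then run a ``tail-of-convergent-integral'' contradiction argument twice --- once for $h$ against $F$, then (after an integration by parts) for $h'$ against $G$.

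I would set $h(r) := c_u(r) - c_u(0) - r c_u'(0)$, noting $h(0)=0$, $h \geq 0$ on $(0,1)$ and $h'(r) = c_u'(r) - c_u'(0) \geq 0$ for $r>0$. By Proposition~\ref{prp-energy-auto} together with \eqref{eq-molli-3} (which bounds the tail $\int_1^\infty (c_u(0)-c_u(r)) K_0(r)r^{d-2}\,dr$), the hypothesis $\EE_0(u)<\infty$ is equivalent to
\begin{equation*}
\int_0^1 h(r)\,K_0(r)\,r^{d-2}\,dr \,<\,\infty.
\end{equation*}
The crucial identity is $F'(r)=-\sigma_{d-1}K_0(r)r^{d-2}$, so the integral above equals $-\frac{1}{\sigma_{d-1}}\int_0^1 h(r)F'(r)\,dr$.

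For \ref{crit-seq1} I would argue by contradiction: if no such $\delta_j$ existed, then $h(r)F(r)\geq\eta$ on $(0,r_0)$ for some $\eta,r_0>0$, hence $h(r)\geq \eta/F(r)$, and
\begin{equation*}
-\int_0^{r_0} h(r)F'(r)\,dr \;\geq\; \eta \int_0^{r_0}\frac{-F'(r)}{F(r)}\,dr \;=\; \eta \log \frac{F(0^+)}{F(r_0)} \;=\; +\infty,
\end{equation*}
since $F(0^+)=\infty$ by \eqref{eq-molli-1}. This contradicts finiteness of the energy and yields \ref{crit-seq1}.

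For \ref{crit-seq2} I would first integrate by parts on $[\delta_j,1]$ against the sequence produced by \ref{crit-seq1} (valid because $c_u$ is Lipschitz, so $h$ is absolutely continuous, and $F$ is absolutely continuous on any $[\delta_j,1]$):
\begin{equation*}
-\int_{\delta_j}^1 h(r)F'(r)\,dr \;=\; h(\delta_j)F(\delta_j) - h(1)F(1) + \int_{\delta_j}^1 h'(r)F(r)\,dr.
\end{equation*}
Passing $j\to\infty$ and using \ref{crit-seq1} to kill the boundary term at $\delta_j$ yields $\int_0^1 h'(r)F(r)\,dr<\infty$. Since $G'(r)=-F(r)$ on $(0,1)$, the setup is now exactly parallel to the previous step with $(h,F)$ replaced by $(h',G)$. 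If $G(0^+)=\infty$, the identical contradiction argument (assuming $G(r)h'(r)\geq\eta'$ near $0$ and integrating $-G'/G$) produces $\gamma_j\to 0$ with $G(\gamma_j)h'(\gamma_j)\to 0$. If instead $G(0^+)<\infty$, then $G$ is bounded near $0$; and since $h(r)/r\to 0$ while $h'\geq 0$, any uniform lower bound $h'(r)\geq\epsilon$ on $(0,\delta_0)\setminus N$ with $|N\cap(0,r)|=o(r)$ would force $h(r)/r\geq\epsilon/2$ for small $r$, a contradiction; hence $\operatorname{ess\,inf}_{(0,\delta)}h'\to 0$ and I can pick $\gamma_j\to 0$ with $h'(\gamma_j)\to 0$, so $G(\gamma_j)h'(\gamma_j)\to 0$ by boundedness of $G$.

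The main obstacle I anticipate is purely technical: one must be careful that the integration by parts is legitimate given $c_u'$ is only $BV_{\mathrm{loc}}((0,\infty))$, and that the two subcases $G(0^+)<\infty$ and $G(0^+)=\infty$ are handled uniformly. Both points resolve cleanly because $c_u$ is Lipschitz on $[0,\infty)$ (so $h$ and $h'$ make sense as an AC function and its a.e.\ derivative), and because $F$ is absolutely continuous on every compact subset of $(0,\infty)$, so the Lebesgue--Stieltjes manipulations reduce to ordinary calculus on $[\delta_j,1]$.
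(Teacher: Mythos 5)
Your argument is correct and follows essentially the same route as the paper: the same $-F'/F$ contradiction argument using $F(0^+)=\infty$ for part (i), then an integration by parts along the sequence $\delta_j$ followed by the identical argument with $(h',G)$ in place of $(h,F)$ for part (ii). Your extra subcase $G(0^+)<\infty$ is harmless but vacuous here, since \eqref{eq-molli-1}--\eqref{eq-molli-3} force $G(0^+)=\sigma_{d-1}\bigl(\int_0^1 K_0(s)s^{d-1}\,ds+\int_1^\infty K_0(s)s^{d-2}\,ds\bigr)=\infty$, exactly as the paper notes.
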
 
\begin{proof}
  \textit{\ref{crit-seq1}:} With the representation
    \eqref{eq-EE1-auto} of the energy, we have
  \begin{align}\label{eq-bE1}
   \sig_{d-1} \int_0^1 (c_u(r)-c_u(0)-c'_u(0)r) K_0(r)r^{d-2} dr  %
    <  \EE_0(u) +  C_{K_0} \NI{c_u} \ %
    <  \infty,
  \end{align}
  where we also used \eqref{eq-molli-1}--\eqref{eq-molli-3}.  Arguing by
  contradiction we may assume that there exist $\delta_0>0$ and $c>0$ such that
\begin{align}\label{eq-F}
F(r)\left(c_u(r)-c_u(0)-c'_u(0)r\right)\geq c, \quad \text{ for all } r\in (0,\delta_0).
\end{align}
Since $F>0$ and $c_u(r)-c_u(0)-c'_u(0)r\geq 0$, from \eqref{eq-F} and
\eqref{eq-bE1} as well as the definition of $F$ we get
\begin{align} \label{eq-bEE2} %
  c\int_0^{\delta_0}\frac{-F'(r)}{F(r)} \ dr \ %
  \lupref{eq-F}< \ \EE_0(u) +  C_{K_0} \NI{c_u} \ %
  < \ \infty.
\end{align}
Since $F(\delta_0)<\infty$ by the assumption \eqref{eq-molli-3} of the kernel $K_0$, from \eqref{eq-bEE2} one has that
$\lim_{r\rightarrow 0}\ln F(r)<\infty$. This is a contradiction, because
$\lim_{r\rightarrow 0}F(r)\geq \int_0^1 K_0(r) r^{d-1}\ dr=\infty$.

\medskip

\textit{\ref{crit-seq2}:} Applying integration by parts to \eqref{eq-bE1} from $\delta_j$ to $1$ and using \ref{crit-seq1} we have
\begin{align}
-\lim_{j\rightarrow \infty}\int_{\delta_j}^1 \left(c'_u(r)-c'_u(0)\right) F(r)\ dr<\infty.
\end{align}
Analogously as for \ref{crit-seq1} and using that $\lim_{r\rightarrow 0} G(r)\geq \int_0^1 K_0(r) r^{d-1} \ dr =\infty$, one then gets a sequence
$\t\delta_j\rightarrow 0$ which satisfies \ref{crit-seq2}.
  \end{proof}
  
  With Lemma \ref{lem-decay} at hand we can express the energy using geometric
  quantities of $\Ome := \spt u$: \hkcc{We note that the assumption below in
    particular includes the case $K = r^{-d}$ except for $d = 1$. For $d = 1$,
    the case $K = \ln r$ is included (?)}
  \begin{proposition}[Equivalent formulations for $\EE_0$]
    \label{prp-equiv-energy} %
    Let $u\in BV(\Td;\{0,1\})$ with
    $\EE_0(u)<\infty$. Suppose that \eqref{eq-molli-1}--\eqref{eq-molli-3} hold.
    Let $\Ome := \spt u$ and let $H^-(x):=\{y\in \Rd: (y-x)\cdot \nu(x)\leq 0\}$ for
    $x\in \p\Ome$, where $\nu$ is the measure theoretic unit outer normal
    of $\Ome$ at $x$. We use the notation $z:=y-x$. Then we have
  \begin{enumerate}
  \item\label{itr-E1} %
    $\displaystyle \begin{array}[t]{ll}
       |\Td| \EE_0(u) \ &= \ - \ome_{d-1} P(\Ome)F(1)       \label{eq-energy_equi} \vspace{0.5ex}\\
                         &\displaystyle  \qquad+  \int_{\p \Ome} \int_{(\Ome\Delta H^-(x))\cap B_1(x)}\frac {F(|z|)}{|z|^{d-1}} \Big|\frac{z}{|z|}\cdot \nu(x) \Big| \  dy dx\\
                         &\displaystyle  \qquad+ \int_{\p \Ome}\int_{\Ome \cap B_1(x)^c  } \frac {F(|z|)}{|z|^{d-1}}  \left(\frac{z}{|z|} \cdot \nu(x)\right) \ dy dx.
    \end{array}$
  \item\label{itr-E2} %
    Let $\t\delta_j\to 0$ be any sequence as in Lemma
    \ref{lem-decay}\ref{crit-seq2}. Assume that there is a sequence
    $R_j\rightarrow \infty$ such that $G(R_j)R_j^{-\frac{d-1}{2}}\rightarrow 0$
    as $j\rightarrow\infty$. Then
    \begin{align} %
      |\Td| \EE_0(u) \ %
      &=  \ - \ome_{d-1}  P(\Ome)F(1)  \label{ene-formula} \\
      &\quad + \ \lim_{j \to \infty} \iint_{P_j} \frac{G(|z|)}{|z|^{d-1}}\left( \nu(y)\cdot \frac{z}{|z|}\right)\left( \nu(x)\cdot \frac{z}{|z|} \right)\ dydx, \notag
    \end{align}
    where
    $P_j := \ \{ (x,y) \in \p \Ome \times \p \Ome^{per} : \t\delta_j \leq
    |x-y| \leq R_j \}$ with $\Ome^{per} \SUS \R^d$ being the periodic copies
    of $\Ome$.
\end{enumerate}
\end{proposition}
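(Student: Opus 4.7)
The plan is to perform integration by parts in the radial autocorrelation formula \eqref{eq-EE1-auto}. Since $F'(r) = -\sigma_{d-1} K_0(r) r^{d-2}$, we have $\EE_0(u) = -\int_0^\infty h(r)\,F'(r)\,dr$ with $h(r) := c_u(r) - c_u(0) - rc'_u(0)\chi_{(0,1)}(r)$. I split this integral at $r = 1$, where $h$ has a jump of size $c'_u(0)$. On $(0,1)$ the integrand $-h\,F'$ is nonnegative and on $(1,\infty)$ it is nonpositive (by Proposition \ref{prp-auto2}(iii) and (v)), so each IBP split is unambiguous. The boundary term at $r = 0$ vanishes along the subsequence from Lemma \ref{lem-decay}(i), while at $r = \infty$ it vanishes because $h$ is bounded and $F(r) \to 0$ by \eqref{eq-molli-3}. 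Summing the two pieces gives
\begin{align*}
\EE_0(u) = c'_u(0)\,F(1) + \int_0^1 (c'_u(r) - c'_u(0))\,F(r)\,dr + \int_1^\infty c'_u(r)\,F(r)\,dr,
\end{align*}
in which the first term, after scaling and using $|\Td|\sigma_{d-1}c'_u(0) = -\omega_{d-1}P(\Omega)$ from Proposition \ref{prp-auto2}(v), produces the $-\omega_{d-1}P(\Omega)F(1)$ contribution.

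To rewrite the two $F$-integrals as the volume integrals appearing in part (i), I derive a boundary representation for $c'_u(r)$. Differentiating $C_u(z) = \dashint_\Td u(y) u(y - z)\,dy$ in the direction $\omega$ and using $\nabla \chi_\Omega = -\nu\,\mathcal{H}^{d-1}\lfloor \p\Omega$ gives, for $r > 0$,
\begin{align*}
|\Td|\,\sigma_{d-1}\,c'_u(r) = \int_{\S^{d-1}} \int_{\p\Omega} (\omega\cdot\nu(x))\,\chi_{\Omega^{per}}(x + r\omega)\,d\mathcal{H}^{d-1}(x)\,d\omega,
\end{align*}
in agreement with Proposition \ref{prp-auto2}(v) in the limit $r \to 0^+$, where $\chi_{\Omega^{per}}(x + r\omega)$ is replaced by $\chi_{H^-(x)}(x + r\omega) = \chi_{\{\omega\cdot\nu(x)\leq 0\}}(\omega)$. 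Combining these via the pointwise identity
\begin{align*}
(\omega\cdot\nu(x))\,[\chi_{\Omega^{per}} - \chi_{H^-(x)}](x + r\omega) = |\omega\cdot\nu(x)|\,\chi_{\Omega^{per}\Delta H^-(x)}(x + r\omega),
\end{align*}
and changing variables $z = r\omega$ (so $dr\,d\omega = dz/|z|^{d-1}$), the $(0,1)$-integral transforms into the integral over $(\Omega \Delta H^-(x))\cap B_1(x)$ weighted by $F(|z|)/|z|^{d-1}$, and the $(1,\infty)$-integral into the signed integral over $\Omega \cap B_1(x)^c$. This completes part (i).

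For part (ii), I perform a second integration by parts using $G$ as antiderivative of $F$ (with $G'(r) = -F(r)$ on $(0,1)$ and $G'(r) = F(r)$ on $(1,\infty)$, and $G(1) = 0$). After truncating the $F$-integrals at $\tilde\delta_j$ and $R_j$, the boundary contribution at $r = \tilde\delta_j$ vanishes by Lemma \ref{lem-decay}(ii), and at $r = R_j$ vanishes because $|c'_u(R_j)| \leq C R_j^{-(d-1)/2}$ by Proposition \ref{prp-auto2}(vi) together with the hypothesis $G(R_j) R_j^{-(d-1)/2} \to 0$. The resulting interior terms involving $G(r)\,dc'_u(r)$ on $(\tilde\delta_j, 1)$ and on $(1, R_j)$ are converted to the double boundary integral over $P_j$ by applying Proposition \ref{prp-auto2}(vii) to smooth approximations of $G(r)\chi_{(\tilde\delta_j, R_j)}(r)$, with continuity at $r = 1$ ensured by $G(1) = 0$. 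Passing to the limit $j \to \infty$ yields \eqref{ene-formula}.

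The main obstacle will be the careful bookkeeping of boundary terms in the two integrations by parts, since both $F(r)$ and $G(r)$ blow up at $r = 0$; the subsequences provided by Lemma \ref{lem-decay} are precisely designed to make these contributions vanish, but I must verify that the interior integrals converge. In part (i) this is automatic by monotonicity of each piece, but in part (ii) convergence depends intrinsically on the chosen sequences $\tilde\delta_j, R_j$, which is why the formula is stated only as a limit along these sequences.
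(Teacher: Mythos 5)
Your proposal is correct and follows essentially the same route as the paper: a first integration by parts of \eqref{eq-EE1-auto} against $F$ (with the boundary term at $0$ killed via Lemma \ref{lem-decay}\ref{crit-seq1}) to reach $\EE_0(u)=c_u'(0)F(1)+\int_0^1(c_u'-c_u'(0))F+\int_1^\infty c_u'F$, then the geometric representation of $c_u'(r)$ as a boundary integral over $\p\Ome$ combined with the $H^-(x)$ comparison (your pointwise identity is exactly the paper's maximality observation for $K\mapsto-\int_{\S^{d-1}\cap K}\ome\cdot\nu\,d\ome$), and for (ii) a second integration by parts against $G$ using Lemma \ref{lem-decay}\ref{crit-seq2}, Proposition \ref{prp-auto2}\ref{itc-asym} and \ref{itc-bvloc}. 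No substantive differences.
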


\begin{proof}
  \textit{\ref{itr-E1}:} %
  We start with the representation \eqref{eq-EE1-auto} of $\EE_0$ in
    Proposition \ref{prp-energy-auto}.  We note that $F(R)\rightarrow 0$ as
    $R\rightarrow \infty$ which follows from \eqref{eq-molli-3}.  Integrating by
    parts both integrals in \eqref{eq-EE1-auto} (for the first integral of
    \eqref{eq-EE1-auto} we integrate by parts from $\delta_j$ to $1$, where the
    sequence $\delta_j$ is from Lemma \ref{lem-decay}\ref{crit-seq1}, and then let $\delta_j\rightarrow 0$) yields
  \begin{align}
    \EE_0(u) \ &= \  c'_u(0) F(1) + \int_{0}^1\left(c'_u(r)-c'_u(0)\right)F(r)\ dr + \int_1^\infty c'_u(r) F(r) dr  \\
    &=: \ I_1 + I_2 + I_3.  \label{eq-equi-2}
  \end{align}
 Here we have used that the integrand for $I_2$ is nonnegative and thus the limit exists. We note that $I_1= - \frac{\ome_{d-1} P(\Ome)}{\sig_{d-1}|\Td|}F(1)$.  To
  calculate $I_2$ and $I_3$ we first express $c'_u(r)$ using the geometric quantities. Using
  $\nabla u (x)=-\nu(x) \HH^{d-1}\lfloor \p^\ast \Ome$ and with
  $\Ome_r(x):=\frac{\Ome-x}{r}$, for
  a.e. $r>0$ we have
  \begin{align}
     c'_u(r) \ %
    &= \ \dashint_{\S^{d-1}} \dashint_{\Td} \ome\cdot \nabla u(y) u(y-r\ome)\ dy d\ome\\
    &= \ \frac{1}{\sig_{d-1}|\Td|}\int_{\p \Ome} \int_{\S^{d-1}\cap \Ome_r(x)} \ome \cdot \nu(x)\ d\ome dx.  \label{rep-c'} 
  \end{align}
  In the limit as $r\to 0$ we get
  \begin{align}
    c'_u(0) \ %
    &=  \ \frac{1}{|\Td|}\int_{\p \Ome}\int_{\S^{d-1}\cap H^-(x)}\ome\cdot \nu(x) \ d\ome dx. \label{rep-c'(0)}
  \end{align}
  Inserting \eqref{rep-c'} and \eqref{rep-c'(0)} into the integrand of $I_2$ and
  by Fubini we get
  \begin{align}
    I_2 = \frac{1}{|\Td|}\int_{\p \Ome} \int_0^1F(r)\left(\int_{\S^{d-1}\cap
    \Ome_r(x)}\ome\cdot \nu(x)d\ome- \int_{\S^{d-1}\cap
    H^-(x)}\ome\cdot \nu(x) d\ome\right) dx dr.    
  \end{align}
  Observing that for each $\nu\in \S^{d-1}$ fixed,
  $f(K):=-\int_{\S^{d-1}\cap K}\ome\cdot \nu\ d\ome $ achieves its maximum when
  $K=H^-(x)$, we have
  \begin{align}\label{eq-bend2} %
    I_2 \  = \  \frac{1}{|\Td|}\int_{\p \Ome} \int_0^1F(r)\int_{\S^{d-1}\cap
      \left(\Ome_r(x)\Delta H^-(x)\right)}|\ome\cdot \nu(x)|\ d\ome dr
      dx.
  \end{align}
  Similarly, inserting \eqref{rep-c'} into the integrand of $I_3$ we have
  \begin{align} \label{eq-bend3} %
    I_3 \  = \ \frac{1}{|\Td|}\int_{\p \Ome}\int_1^\infty F(r)\int_{\S^{d-1}\cap \Ome_r(x)} \ome \cdot \nu(x)\ d\ome dr dx.
  \end{align}
  The desired identity then follows from \eqref{eq-equi-2}, \eqref{eq-bend2} and
  \eqref{eq-bend2} with $\ome=\frac{y-x}{|y-x|}\in \S^{d-1}$ with $y\in B_1(x)$
  and $r=|y-x|$ and by a change from polar coordinates to Cartesian coordinates.

  \medskip
  
  \textit{\ref{itr-E2}:} We choose a sequence $\t \delta_j \to 0$ such that the
  convergence in Lemma \ref{lem-decay}\ref{crit-seq2} holds. Starting point is
  the representation \eqref{eq-equi-2} of the energy.  Integrating by parts and
  using that $G(1) = 0$ we have
\begin{align}
  \hspace{6ex} & \hspace{-6ex} %
                 \int_{\t \delta_j}^1 \big(c'_u(r)-c'_u(0)\big)F(r)  \ dr + \int_1^\infty c'_u(r) F(r) \ dr \\ %
                &= \  -\int_{\t \delta_j}^{R_j} c''_u(r) G(r)\ dr  - \left(c'_u(\t \delta_j)-c'_u(0)\right) G(\t \delta_j) - c_u'(R_j) G(R_j).
\end{align}
By our choice of sequence, the second term on the right hand side vanishes as
$j \to \infty$. By assumption as well as Proposition \ref{prp-auto2}\ref{itc-asym}\ref{itc-perim}  the term related to the evaluation at $R_j$
vanishes as $j \to \infty$. Adding the expressions for short-range and
long-range interaction together we
obtain the claimed equivalent form of the energy.
\end{proof}
\begin{remark}[Principal value interpretation of energy formula] \text{} %
  \begin{enumerate}
  \item When $K_0(r)=r^{-d}$, Proposition
    \ref{prp-equiv-energy}\ref{itr-E1} holds without the assumption
    $\EE_0(u)<\infty$. This is because 
    $\lim_{r\rightarrow 0+}\frac{c_u(r)-c_u(0)}{r}=c'_u(0)$ for
    any $u\in BV(\Td;\{0,1\})$ by Proposition \ref{prp-auto2}\ref{itc-perim} and $F(r)=\sig_{d-1}r^{-1}$, then Lemma \ref{lem-decay}\ref{crit-seq1} holds for any $\delta\rightarrow 0$ without the assumption $\EE_0(u)<\infty$.  Thus
    in this special case the energy formula \ref{itr-E1} holds for all
    $u\in BV(\Td;\{0,1\})$.
  \item The integrand in the energy formula \eqref{ene-formula} is not
    absolutely integrable in general if either $\t\delta = 0$ or
      $R = \infty$. An example for this is a configuration which consists of
    stripe domains, cf. Lemma \ref{lem-stripes}.
  \end{enumerate}
\end{remark}
\

\section{Proof of the theorems}\label{sec-proofs}

\subsection{Proof of Theorem \ref{thm-gamma}}\label{sec-Gamma}

We establish the compactness and $\Gam$-convergence of the nonlocal energy
$\EE_\eps(u)$ as $\eps \to 0$. We first establish the compactness result:
\begin{proposition}[BV-bound and compactness] \label{prp-gamma} %
  Let $K_\eps, K_0$ satisfy \eqref{eq-molli-1}--\eqref{eq-molli-3}.
      \begin{enumerate}
      \item For $u \in BV(\Td;\{0,1\})$ there are constants $C = C(K_0,d)>0$
        such that
  \begin{align}
    \NN{\nabla u} \ \leq \ C \left( |\Td|\EE_\eps(u)+ C\NP{u}{1}\right).
  \end{align}
\item For any family $u_\eps\in BV(\Td;\{0,1\})$ with
  $\sup_\eps \EE_\eps(u_\eps)<\infty$, there exists $u\in BV(\Td;\{0,1\})$
  and a subsequence (not relabelled) such that
    \begin{align}
      u_\eps \to u \text{ in } L^1 \quad %
      \text{ and } \quad %
      \NN{\nabla u_\eps}\to \NN{\nabla u} \qquad \text{for $\eps \to 0$}.
\end{align}
  \end{enumerate}
\end{proposition}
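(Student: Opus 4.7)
My strategy is to work with the autocorrelation form \eqref{eq-EE1-auto} of the energy. Writing $L := -c'_u(0) = \frac{\ome_{d-1}}{\sig_{d-1}} \NN{\nabla u}/|\Td|$ and $m := c_u(0) = \NP{u}{1}/|\Td|$, I split the integral at $r=1$:
\begin{align*}
\EE_\eps(u) = \sig_{d-1}\int_0^1 K_\eps(r) r^{d-2}\bigl[rL - (c_u(0)-c_u(r))\bigr] dr + \sig_{d-1}\int_1^\infty K_\eps(r) r^{d-2}\bigl[c_u(r)-c_u(0)\bigr] dr.
\end{align*}
The first integrand is nonnegative by the Lipschitz bound $c_u(0)-c_u(r) \leq Lr$ (Proposition \ref{prp-auto2}\ref{itc-perim}); the second integral has absolute value at most $Cm$, since $c_u(r) \in [0,m]$ and by \eqref{eq-molli-3} together with $K_\eps \leq K_0$. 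Combining the Lipschitz bound with the trivial bound $c_u(0)-c_u(r) \leq m$ gives $rL - (c_u(0)-c_u(r)) \geq (rL-m)^+ \geq rL/2$ for $r\geq 2m/L$, hence (assuming $L\geq 2m$)
\begin{align*}
\tfrac{1}{2}\sig_{d-1}\, L \int_{2m/L}^1 K_\eps(r) r^{d-1}\, dr \ \leq \ \EE_\eps(u) + Cm.
\end{align*}

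For part (i), if $L\leq 2m$ the bound $\NN{\nabla u}\leq C\NP{u}{1}$ is immediate. Otherwise I fix $\bar r\in(0,1)$ and $\eps_0>0$, depending only on $K_0$ and $d$, so that $c_*:=\int_{\bar r}^1 K_{\eps_0}(r) r^{d-1}\,dr>0$; such a choice is possible because \eqref{eq-molli-1} forces $K_0$ to have positive mass in a neighbourhood of some $r_* \in (0,1)$, and the monotone convergence $K_\eps\nearrow K_0$ transfers this positivity to $K_{\eps_0}$ for $\eps_0$ small. If $L\geq 2m/\bar r$, then $2m/L\leq \bar r$ and the monotonicity $K_\eps\geq K_{\eps_0}$ yields $\int_{2m/L}^1 K_\eps r^{d-1}\,dr \geq c_*$, so the displayed inequality gives $L\leq C(\EE_\eps(u)+m)/c_*$. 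The complementary case $L<2m/\bar r$ is trivial. Translating back produces the asserted BV estimate $\NN{\nabla u}\leq C|\Td|\EE_\eps(u)+C\NP{u}{1}$, uniformly in $\eps\in(0,\eps_0]$.

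For part (ii), the uniform BV bound and the compact embedding $BV(\Td)\hookrightarrow L^1(\Td)$ extract a subsequence with $u_\eps\to u$ in $L^1$ for some $u\in BV(\Td;\{0,1\})$, and $\NN{\nabla u}\leq \liminf_\eps \NN{\nabla u_\eps}$ by lower semicontinuity. For the matching upper bound I rearrange the defining identity as
\begin{align*}
\frac{\NN{\nabla u_\eps}}{|\Td|} \ = \ \frac{\EE_\eps(u_\eps)}{M_\eps} + \frac{1}{M_\eps}\int_{\Rd} K_\eps(z) \dashint_\Td \frac{|u_\eps(x+z)-u_\eps(x)|}{2|z|}\, dx\, dz.
\end{align*}
The first ratio vanishes in the limit since $\sup_\eps \EE_\eps(u_\eps)<\infty$ and $M_\eps\to\infty$, while the second is asymptotically bounded above by $\dashint|\nabla u|$ via a BBM-type upper semicontinuity along $L^1$-convergent sequences. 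In the autocorrelation setup this becomes transparent: $c_{u_\eps}\to c_u$ uniformly on $\Rd$ by Proposition \ref{prp-auto1}\ref{it-Cu-ucon}, and writing the nonlocal integrand through $c_{u_\eps}(0)-c_{u_\eps}(r)$ (cf.~Proposition \ref{prp-auto2}\ref{itc-diffquot}) reduces the claim to dominated convergence plus the pointwise BBM limit applied to the fixed limit configuration $u$.

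\textbf{Main obstacle.} The most delicate point in (i) is extracting the scale $\bar r$ and threshold $\eps_0$ depending only on $K_0$ and $d$: the only non-degeneracy one has is \eqref{eq-molli-1}, so one must first locate a scale where $K_0$ retains mass and then use the monotonicity $K_\eps\nearrow K_0$ to transfer the lower bound to all smaller $\eps$. The matching upper bound in (ii) is essentially a BBM-type statement along varying sequences, for which the autocorrelation reformulation is the key simplification.
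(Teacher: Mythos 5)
Your part (i) is correct and is essentially the paper's own argument: both proofs start from the representation \eqref{eq-EE1-auto}, use the nonnegativity of $c_u(r)-c_u(0)-c_u'(0)r$ and the bound $c_u(0)-c_u(r)\le c_u(0)$ to isolate the term $-c_u'(0)\int K_\eps(r)r^{d-1}\,dr$ over a region of $(0,1)$ bounded away from the origin, and then use $K_\eps\nearrow K_0$ to guarantee that this region carries a definite kernel mass uniformly for small $\eps$. Your threshold $2m/L$ versus the paper's fixed set $A\SUS(0,1)$ is an interchangeable detail. The first half of (ii) (uniform $BV$ bound, $L^1$-compactness, lower semicontinuity) is also fine.

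The gap is in the matching upper bound $\limsup_\eps\NN{\nabla u_\eps}\le\NN{\nabla u}$, which is the actual content of the second convergence in (ii). You invoke a ``BBM-type upper semicontinuity along $L^1$-convergent sequences'' for the normalized nonlocal perimeter; no such statement exists. BBM/D\'avila give pointwise convergence for a \emph{fixed} configuration and the $\Gamma$-\emph{liminf} inequality along sequences; the limsup inequality along arbitrary $L^1$-convergent sequences is false (a diagonal sequence of zigzag boundaries flattening in $L^1$ while keeping perimeter $P>\NN{\nabla u}$ gives a normalized nonlocal term converging to $P/|\Td|$). So the energy bound $\sup_\eps\EE_\eps(u_\eps)<\infty$ must enter in an essential way, and your proposed route does not let it in. Concretely, replacing $c_{u_\eps}$ by $c_u$ in $\frac{\sig_{d-1}}{M_\eps}\int_0^\infty(c_{u_\eps}(0)-c_{u_\eps}(r))K_\eps(r)r^{d-2}\,dr$ cannot be justified by the uniform convergence $c_{u_\eps}\to c_u$ plus dominated convergence: the replacement error is only controlled near the origin by $\|c_{u_\eps}-c_u\|_{\infty}\int_0^1K_\eps(r)r^{d-2}\,dr$, and $\int_0^1K_\eps(r)r^{d-2}\,dr$ is not $O(M_\eps)$ — for $K_\eps^{(2)}(r)=r^{-d+\eps}$ it is $+\infty$ for every $\eps<1$. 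The mass of $K_\eps(r)r^{d-2}\,dr$ concentrates exactly at $r=0$, where $(c(0)-c(r))/r$ encodes $c'(0)$, i.e.\ the perimeter; what you would need to control there is precisely $c_{u_\eps}'(0)-c_u'(0)$, the quantity whose convergence you are trying to prove. The paper closes this by a different mechanism: assuming $c_{u_{\eps_j}}'(0)\to\alpha$, it applies Fatou on $(\delta,1)$ to the nonnegative integrand $(c_{u_{\eps_j}}(r)-c_{u_{\eps_j}}(0)-c_{u_{\eps_j}}'(0)r)K_{\eps_j}(r)r^{d-2}$ to get $\int_\delta^1(c_u(r)-c_u(0)-\alpha r)K_0(r)r^{d-2}\,dr\le C$ uniformly in $\delta$, subtracts the nonnegative quantity $\int_\delta^1(c_u(r)-c_u(0)-c_u'(0)r)K_0(r)r^{d-2}\,dr\ge0$ to obtain $(c_u'(0)-\alpha)\int_\delta^1K_0(r)r^{d-1}\,dr\le C$, and lets $\delta\to0$ so that the divergence of the kernel mass forces $c_u'(0)\le\alpha$. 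Some argument of this type, exploiting the sign structure for the \emph{limit} function together with the divergence of $\int_\delta^1K_0(r)r^{d-1}\,dr$, is required; the soft limit passage in your write-up does not substitute for it.
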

\begin{proof}
  \textit{(i):} We use the expression of the energy $\EE_\eps(u)$ in terms
  of the autocorrelation function in Proposition \ref{prp-energy-auto}.  By the
  assumptions \eqref{eq-molli-1}--\eqref{eq-molli-3} as well as $K_\eps\nearrow K_0$, there are $C_0, C_1 > 0$
  and a measurable set $A\subset (0,1)$, which depends on $K_0$, such that for
  sufficiently small $\eps >0$ we have
  \begin{align} \label{eq-Aeps} C_0 \leq \int_{A} K_\eps(r) r^{d-1}\ dr \qquad
    \text{and} \qquad %
    \int_{A\cup (1,\infty)} K_\eps(r)r^{d-2} dr\leq C_1. %
  \end{align}
  Using \eqref{eq-Aeps} as well as $c_u(r)-c_u(0)-c'_u(0)r \geq 0$ we then get
  \begin{align}
    -2c'_{u}(0) \int_{A}K_\eps(r)r^{d-1} \ dr \ %
    &\leq \ \sig_{d-1}^{-1}\EE_\eps(u) +  \int_{A\cup (1,\infty)} \left(c_{u}(0)-c_{u}(r) \right) K_\eps(r) r^{d-2}\ dr\\
    &\leq \ \sig_{d-1}^{-1}\EE_\eps(u)  + C_1 \NI{c_{u}}\leq \EE_\eps(u)  +C_1\frac{ \NPL{u}{1}{\Td}}{|\Td|},
  \end{align}
  where we have used Proposition \ref{prp-auto2}(ii) and \eqref{eq-Aeps} for the last
  estimate. Recalling the expression for $c'_u(0)$ in Proposition \ref{prp-auto2}(iv), we thus obtain the desired estimate from the above inequality.

  \medskip

  \textit{(ii):} By \textit{(i)} we have $\NN{\nabla u_\eps} \leq C$. By the compactness
  of the BV functions, there is a subsequence of $u_\eps$ (which we still denote
  by $u_\eps$) and $u\in BV(\Td,\{0,1\})$, such that $u_\eps\to u$ in $L^1(\Td)$
  and $\NN{\nabla u}\leq \liminf_{\eps\to 0} \NN{\nabla u_\eps}$. To show the
  convergence of $\NN{\nabla u_\eps}$, by Proposition \ref{prp-auto2} it
  suffices to show $c'_{u_\eps}(0)\to c'_u(0)$. Assume
  $c'_{u_{\eps_j}}(0)\to \alpha$ for some $\alpha\in \R$ along a subsequence. By
  the lower semi-continuity of the BV-norm we have $\alp \leq c'_u(0)$. It
  remains to show that $\alp \geq c'_u(0)$: By Fatou's lemma and
  \eqref{eq-Aeps} for any $\delta\in (0,1)$,
  \begin{align}
    \int_{\delta}^1\left(c_u(r)-c_u(0)-\alpha r\right) K_0(r)r^{d-2}\ dr \leq \liminf_{j\to  \infty}\sig_{d-1}^{-1}\mathcal{E}_{K_{\eps_j}}(u_{\eps_j}) +C_1\leq C.
  \end{align}
  On the other hand, by Proposition \ref{prp-auto2}
  \begin{align}
    \int_\delta^{1}\left(c_u(r)-c_u(0)-c'_u(0) r\right) K_0(r)r^{d-2}\ \ dr \ \geq \ 0.
  \end{align}
  Taking the difference of the above two inequalities yields
  \begin{align}
    \int_\delta^{1} \left(c'_u(0)-\alpha\right) K_0(r) r^{d-1} \ dr \ \leq \ C.
  \end{align}
  Since
  $\int_0^\infty K_0(r) r^{d-1} \ dr=\infty$ and $K_0\geq 0$, then $\int_\delta^\infty K_0(r) r^{d-1}\ dr \to \infty$
  as $\delta\to 0$. This implies that $c'_u(0)\leq \alpha$, particularly one has
  $-c'_u(0)\geq \limsup _{\eps\to 0} -c'_{u_\eps}(0)$. 
  \end{proof}

\medskip

As a consequence of Proposition \ref{prp-gamma}, we get the existence of
minimizers for $\EE_\eps$ with prescribed volume fraction:
\begin{proof}[Proof for Proposition \ref{prp-minex}]
Let $\eps\geq 0$ be fixed. It follows from Lemma \ref{lem-ball-cu} (ii)  that $\EE_\eps(u)<+\infty$ if $u$ is a stripe configuration, and furthermore $\EE_\eps(u)\geq -C$ for all $u\in BV(\Td;\{0,1\})$ (cf. Proposition \ref{prp-energy-auto}). Hence the infimum of $\EE_\eps(u)$ exists and is finite. Let $\{u_k\}$ be a minimizing sequence for $\EE_\eps$.  Then by Proposition \ref{prp-gamma}(ii) up to a subsequence $u_k\rightarrow u$ in $L^1(\Td)$ and $\|\nabla u_k\|\rightarrow \|\nabla u\|$ for some $u\in BV(\Td;\{0,1\})$. In particular $u$ satisfies \eqref{mass-constraint}.  Hence  $c_{u_k}\rightarrow c_u$ pointwisely and $c'_{u_k}(0)\rightarrow c'_{u_k}(0)$ by Proposition \ref{prp-auto2}\ref{itc-ucon} and \ref{itc-perim}.  Applying Fatou's lemma (in $(0,1)$) and dominated convergence theorem (in $(1,\infty)$) to the autocorrelation function formulation of $\EE_\eps$ in Proposition \ref{prp-energy-auto}, we obtain that $\EE_\eps(u)\leq \liminf_{k\rightarrow \infty} \EE_\eps(u_k)$. This implies that  $u$ is a minimizer. 
\end{proof}

At the end of the section we establish the $\Gam$-convergence of $\EE_\eps$:
\begin{theorem}[$\Gam$-limit] \label{thm-gamma_limit} %
  Suppose that \eqref{eq-molli-1}--\eqref{eq-molli-3} hold. Then $\EE_\eps \Gamto \EE_0$ in the
  $L^1$--topology, where $\EE_0$ is given in \eqref{E-0}. %
\end{theorem}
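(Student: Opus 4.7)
The plan is to use the autocorrelation formulation from Proposition \ref{prp-energy-auto} and split the energy at $r=1$ into a nonnegative short-range part and a nonpositive long-range part:
\begin{align*}
  \EE_\eps(u) \
  &= \ \sig_{d-1}\!\int_0^1 \bigl(c_u(r)-c_u(0)-rc'_u(0)\bigr) K_\eps(r) r^{d-2} \, dr \\
  &\quad + \ \sig_{d-1}\!\int_1^\infty \bigl(c_u(r)-c_u(0)\bigr) K_\eps(r) r^{d-2} \, dr.
\end{align*}
Nonnegativity on $(0,1)$ comes from Proposition \ref{prp-auto2}\ref{itc-perim}, which yields $c_u(r) \geq c_u(0) + r c'_u(0)$, and nonpositivity on $(1,\infty)$ comes from \ref{itc-mass}. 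This sign structure, combined with the monotonicity $K_\eps \nearrow K_0$, will let me apply Fatou on the short range and dominated convergence on the long range.

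\medskip

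For the $\limsup$ inequality I would take the constant recovery sequence $u_\eps \equiv u$. On $(0,1)$ the nonnegative integrand is independent of $\eps$ and $K_\eps r^{d-2} \nearrow K_0 r^{d-2}$, so monotone convergence gives the short-range limit. On $(1,\infty)$ the bound $|c_u(r)-c_u(0)| \leq c_u(0) \leq 1$ from Proposition \ref{prp-auto2}\ref{itc-mass}, together with the integrability $\int_1^\infty K_0(r) r^{d-2} \, dr < \infty$ from assumption \eqref{eq-molli-3}, legitimizes dominated convergence. Adding the two limits gives $\EE_\eps(u) \to \EE_0(u)$.

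\medskip

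For the $\liminf$ inequality I would start with an arbitrary $u_\eps \to u$ in $L^1(\Td)$. If $\liminf_\eps \EE_\eps(u_\eps) = +\infty$ nothing is to prove, so I pass to a subsequence realizing the liminf with $\sup_\eps \EE_\eps(u_\eps) < \infty$. The compactness Proposition \ref{prp-gamma} then furnishes a further subsequence along which $\NN{\nabla u_\eps} \to \NN{\nabla u}$; by Proposition \ref{prp-auto2}\ref{itc-perim} this is equivalent to $c'_{u_\eps}(0) \to c'_u(0)$, while \ref{itc-ucon} gives $c_{u_\eps} \to c_u$ uniformly on $[0,\infty)$. The nonnegative short-range integrands then converge pointwise, and Fatou produces a lower bound by the corresponding $K_0$-integral; on $(1,\infty)$ the integrand is dominated by $K_0(r) r^{d-2} \in L^1((1,\infty))$ and converges pointwise, so dominated convergence gives the exact limit. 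Summing yields $\liminf_\eps \EE_\eps(u_\eps) \geq \EE_0(u)$, and the Fatou step automatically handles the case $\EE_0(u) = +\infty$.

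\medskip

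The main obstacle is that for nonintegrable $K_0$ the two terms in the original definition \eqref{E-eps} each diverge as $\eps \to 0$, and only their cancellation is finite; the autocorrelation rewriting packages this cancellation into the single nonnegative integrand $c_u(r)-c_u(0)-rc'_u(0)$, so the usual monotone and dominated convergence theorems can be applied once the kernels are split at $r=1$. The key input from compactness is not merely the BV bound in Proposition \ref{prp-gamma}(i), but the strict convergence $\NN{\nabla u_\eps} \to \NN{\nabla u}$ from (ii); without this equality the affine correction $-r c'_{u_\eps}(0)$ in the Fatou step would yield a strictly weaker lower bound.
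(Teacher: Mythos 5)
Your proposal is correct and follows essentially the same route as the paper: the autocorrelation representation from Proposition \ref{prp-energy-auto} split at $r=1$, the constant recovery sequence with monotone/dominated convergence for the $\limsup$ inequality, and the compactness statement of Proposition \ref{prp-gamma} (giving $\NN{\nabla u_\eps}\to\NN{\nabla u}$, hence $c'_{u_\eps}(0)\to c'_u(0)$ and uniform convergence $c_{u_\eps}\to c_u$) combined with Fatou on $(0,1)$ and dominated convergence on $(1,\infty)$ for the $\liminf$ inequality. The only differences are presentational, e.g.\ your explicit reduction to a subsequence realizing the $\liminf$.
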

\begin{proof}
  We use the representation of $\EE_\eps$ from Proposition
    \ref{prp-energy-auto}, i.e.
  \begin{align}
    \EE_\eps(u) \ %
    = \ \sig_{d-1}\int_0^{\infty}\left(c_u(r)-c_u(0)-c'_u(0)r\chi_{(0,1)}(r)\right) K_\eps(r)r^{d-2}\ dr.
  \end{align}

  \textit{(i): Liminf inequality.}  We need to show that for any sequence 
  $u_\eps\to u$ in $L^1$ and $\sup_\eps\EE_\eps(u_\eps)\leq C<\infty$ we
  have
  \begin{align} \label{ls-toshow} %
    \EE_0(u) \ \leq  \ \liminf_{\eps\to 0} \EE_\eps(u_\eps).
  \end{align}
  By Proposition \ref{prp-gamma}(i) the sequence $u_\eps$ is uniformly bounded
  in $BV(\Td; \{0,1\})$. This together with the $L^1$ convergence of $u_\eps$
  and Proposition \ref{prp-gamma}(ii) yields that $u\in BV(\Td;\{0,1\})$ and
  $\|\nabla u_\eps\|\rightarrow \NNN{\nabla u}{}$. Then by Proposition \ref{prp-auto2} we have
  $c_{u_\eps}\to c_u$ uniformly and $c'_{u_\eps}(0) \to c'_{u}(0)$. Moreover,
  the integrands of the energy are nonnegative for $r\in (0,1)$. Thus by Fatou's
  lemma (applied to the integral over $(0,1)$) and the dominated convergence
  theorem (applied to the integral over $(1,\infty)$) one has
  \begin{align}
    \EE_0(u) \ %
      &\leq \  \sig_{d-1}\liminf_{\eps\to  0} \int_0^\infty
        \left(c_{u_\eps}(r)-c_{u_\eps}(0)-c'_{u_\eps}(0) r \chi_{(0,1)}(r) \right)
        K_\eps(r) r^{d-2}\ dr, \ %
  \end{align}
  which yields \eqref{ls-toshow}.

  \medskip
  
  \textit{(ii): Limsup inequality.}  Let $u\in BV(\Td,\{0,1\})$ with
  $\EE_0(u) < \infty$. Since $K_\eps\nearrow K$ by our assumption, by the monotone convergence theorem for the constant
  sequence $u_\eps := u$ we then obtain
  $\limsup_{\eps \to 0} \EE_\eps(u) \ \leq \EE_0(u)$.
\end{proof}

\subsection{Proof of Theorem \ref{thm-ball}}

In this section, we give the proof of Theorem \ref{thm-ball}, i.e., we show
that if the volume $\|u\|_{L^1(\Td)}=\lam |\Td|$ is sufficiently small depending on the
dimension, then the minimal energy is attained by a single ball. We give the
proof for a class of energies which in particular includes the energy
$\EE_0^{(1)}$ in Theorem \ref{thm-ball}. More precisely, we assume that there is
some $M=M(d)> 0$ such that for all $\eta\in (0,\frac{1}{2})$ we have
  \begin{align}\label{eq-kernel}
    \int_\eta^1 K_0(r) r^{d-1} dr \geq  M \max\left\{\frac{1}{\eta^2} \int_0^\eta K_0(r) r^{d+1} \ dr, \ \eta\int_\eta^1 K_0(r) r^{d-2}\ dr\right\}
  \end{align}
  We note that the assumption \eqref{eq-kernel} holds for
  $K_0^{(3)}(r)=r^{-q}$ for $d \leq q \leq d + \alp_0$ for
  some $\alp_0=\alp_0(d)>0$.  Moreover, \eqref{eq-kernel} together
  with \eqref{tay-ball} ensures that the energy of a single ball is finite.
  Then Theorem \ref{thm-ball} is a consequence of the next proposition:
  \begin{proposition}[Balls as minimizers for small volume] \label{prp-ball} %
    Suppose that \eqref{eq-kernel} holds. Then 
    $m_0=m_0(d,K_0)>0$ such that if 
    \begin{align}
      \lam \ \leq \ \min \Big \{ \frac{m_0}{|\Td|}, \frac{\ome_d}{4^d} \Big \},
    \end{align}
   then the unique, up to translation, minimizer $u$ of $\EE_0$ in $BV(\Td;\{0,1\})$
    with constraint \eqref{mass-constraint} is a single ball.
\end{proposition}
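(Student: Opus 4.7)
I would compare the energy of a competitor $u=\chi_\Omega$ with $|\Omega|=m=\lam|\Td|$ against the reference energy of a ball $B_\rho$ of the same volume, $\rho=(m/\ome_d)^{1/d}\leq \ell/4$. The hypothesis on $\lam$ guarantees $2\rho\leq\ell/2$ so that $B_\rho$ embeds faithfully in $\Td$ and, by Lemma \ref{lem-ball-cu}(i), $c_{\chi_{B_\rho}}(r)$ vanishes on the interval $(2\rho,\ell-2\rho)$. The goal is to establish $\EE_0(\chi_\Omega)\geq \EE_0(\chi_{B_\rho})$ with equality only when $\Omega$ is a translate of $B_\rho$, by combining the sharp quantitative isoperimetric inequality of \cite{FMP} with the autocorrelation representation \eqref{eq-EE1-auto}.

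I would first reduce to the case of a single connected $\Omega$. The non-negativity of the interaction energy in Lemma \ref{lem-interaction} gives $\EE_0(\chi_{\Omega_1\sqcup\Omega_2})\geq\EE_0(\chi_{\Omega_1})+\EE_0(\chi_{\Omega_2})$, while an explicit calculation using Lemma \ref{lem-ball-cu}(i) and \eqref{eq-kernel} shows strict subadditivity of $m\mapsto\EE_0(\chi_{B_{(m/\ome_d)^{1/d}}})$ for small $m$, so that any disconnected competitor is strictly beaten by a single ball carrying the total mass. For a connected $\Omega$, I would invoke the truncated representation of Remark \ref{rmk-truncation} with cut-off $r_0=2\rho$:
\begin{align*}
\EE_0(\chi_\Omega) &= \sig_{d-1}\int_0^{2\rho}[c_u(r)-c_u(0)-rc'_u(0)]K_0(r)r^{d-2}\,dr \\
&\quad + \sig_{d-1}\int_{2\rho}^{\infty}[c_u(r)-c_u(0)]K_0(r)r^{d-2}\,dr + \frac{\ome_{d-1}P(\Omega)}{|\Td|}\int_{2\rho}^1 K_0(r)r^{d-1}\,dr,
\end{align*}
and the analogous decomposition for $B_\rho$.

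In the difference $\EE_0(\chi_\Omega)-\EE_0(\chi_{B_\rho})$, the explicit perimeter piece exceeds its ball counterpart by at least $c_d\,\alp(\Omega)^2\,\ome_{d-1}P(B_\rho)|\Td|^{-1}\int_{2\rho}^1 K_0(r)r^{d-1}\,dr$ via the sharp quantitative isoperimetric inequality, where $\alp(\Omega)$ is the Fraenkel asymmetry. The long-range difference is non-negative on $(2\rho,\ell-2\rho)$, since there $c_{\chi_{B_\rho}}\equiv 0$ and the integrand for $\Omega$ is larger because $c_{\chi_\Omega}\geq 0$; the contribution from $r\geq \ell-2\rho$ is controlled by the Fourier decay of Proposition \ref{prp-auto2}\ref{itc-asym}. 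The short-range remainder is estimated via the semiconcavity bound $|c_u(r)-c_u(0)-rc'_u(0)|\leq CP(\Omega)^2 r^2/|\Td|$ obtained from Proposition \ref{prp-auto1}\ref{it-Cu-BV2}, giving a bound by $C(P(\Omega)^2+P(B_\rho)^2)|\Td|^{-1}\int_0^{2\rho}K_0(r)r^d\,dr$. Combining the BV-bound of Proposition \ref{prp-gamma}(i) (which forces $P(\Omega)$ to be of the same order as $P(B_\rho)$ on the sub-level set $\EE_0(\chi_\Omega)\leq\EE_0(\chi_{B_\rho})$) with the kernel condition \eqref{eq-kernel} at $\eta=2\rho$ — which says precisely that $\int_{2\rho}^1 K_0 r^{d-1}\,dr$ dominates the integrals arising in both the short-range and long-range perturbations — the perimeter-excess gain strictly dominates all remainder terms once $m\leq m_0$ for some $m_0=m_0(d,K_0)>0$. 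This forces $\alp(\Omega)=0$, so $\Omega$ is a translate of $B_\rho$.

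The principal obstacle is the delicate cancellation inherent to \eqref{eq-EE1-auto}: for critical kernels such as $K_0(r)=r^{-d}$ the interfacial and nonlocal contributions are individually divergent and only their specific combination in $\EE_0$ is finite. The condition \eqref{eq-kernel} is engineered precisely so that the second-order quantitative isoperimetric gain $\sim\alp(\Omega)^2 P(B_\rho)\int_{2\rho}^1 K_0 r^{d-1}\,dr$ absorbs the short-range autocorrelation mismatch uniformly across all competitors — not merely those close to the ball — and arranging the constants so that $m_0$ depends only on $d$ and $K_0$ is the technically subtle point.
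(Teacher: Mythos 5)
Your overall strategy (compare against the ball of equal volume, use the quantitative isoperimetric inequality and the autocorrelation representation, let \eqref{eq-kernel} absorb the error terms) is the same as the paper's, but two steps as written do not go through. First, the ``semiconcavity bound'' $|c_u(r)-c_u(0)-rc_u'(0)|\leq C P(\Omega)^2 r^2/|\Td|$ for a \emph{general} competitor is false: Proposition \ref{prp-auto1}\ref{it-Cu-BV2} only controls the total variation of the measure $D^2C_u$, not its $L^\infty$ norm, so it does not yield a pointwise quadratic Taylor estimate. If such a bound held, then $\int_0^1(c_u(r)-c_u(0)-rc_u'(0))K_0(r)r^{d-2}\,dr\leq CP(\Omega)^2\int_0^1K_0(r)r^d\,dr<\infty$ for $K_0(r)=r^{-d}$, i.e.\ every $BV$ set would have finite energy $\EE_0^{(1)}$, contradicting Remark \ref{rmk-infinite_energy}. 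The paper needs a pointwise Taylor bound only for the \emph{ball}, where it is explicit (and in fact cubic, see \eqref{tay-ball}); for the competitor it uses only the sign $c_u(r)-c_u(0)-rc_u'(0)\geq 0$ and discards that contribution.

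Second, with your fixed cutoff $r_0=2\rho$ and a gain that is quadratic in the Fraenkel asymmetry, the argument does not close in the regime where the competitor is close to, but not equal to, a ball. The long-range error is controlled through $\|c_u-c_{u_0}\|_\infty\leq|\Td|^{-1}|\Omega\Delta B|$, hence is \emph{linear} in the asymmetry, while your isoperimetric gain is \emph{quadratic}; for small asymmetry the error term dominates, and \eqref{eq-kernel} at the fixed scale $\eta=2\rho$ only produces the condition ``asymmetry $\gtrsim 1/M$'', which excludes precisely the delicate competitors. The paper's proof avoids this by working with the isoperimetric deficit $\delta$ (gain $\sim\delta\rho^{d-1}X_\eta$), bounding $|\Omega\Delta B|\leq C\delta^2|B|$, and --- this is the key device you are missing --- choosing the $\delta$-\emph{dependent} cutoff $\eta=\delta\rho$, so that \eqref{eq-kernel} converts the error $\delta^2\rho^d\int_\eta^\infty K_0(r)r^{d-2}\,dr$ into $\tfrac{C}{M}\delta\rho^{d-1}X_\eta$, which is absorbed by the gain once $M$ is large; the contradiction then comes from $X_\eta\to\infty$. (Your preliminary reduction to connected sets is not needed in this scheme and would require its own justification of strict subadditivity.)
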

\begin{proof}
  The proof is based on a contradiction argument. Similar proofs have been made
  using the sharp isoperimetric deficit (cf. \cite{FMP}) e.g. in
  \cite{KnuMur1,KnuMur2,Jul,BonCri,FFMMM,MurZal}. We adapt these arguments
  to the autocorrelation function formulation of our model. The novelty in our
  proof is that the nonlocal term has the same scaling as the perimeter term.

  \medskip
  
  The \textit{isoperimetric deficit} of the bounded Borel set $\Ome\subset \Rd$
  is defined by
  \begin{align}
    D(\Ome) \ := \ \frac 1{|\p B|}|\p \Ome|-1, 
  \end{align}
  where $B \SUS \Rd$ is the ball with
  $|B|=|\Ome|$. The sharp estimate on the isoperimetric deficit in \cite{FMP}
  entails that
  \begin{align}\label{eq-hall}
    \hspace{-1ex}\min \left\{\frac{|\Ome\Delta B|}{|B|}: B\subset\Rd \text{ is a ball with} \ |B|=|\Ome|\right\} \ %
    \leq \ C_d \d^2,
  \end{align}
  where $\delta := \min \{1, D(\Ome) \}$, noting that the left hand side of
  \eqref{eq-hall} is called the \textit{Fraenkel asymmetry} of $\Ome$. 

  \medskip
  
  Let $u=\chi_{\Ome} \in BV(\Td;\{0,1\})$ be a minimizer in the class of
  functions $u\in BV(\Td;\{0,1\})$ which satisfy \eqref{mass-constraint}. Let
  $u_0=\chi_{B}$, where $B \SUS \Rn$ is the ball with $|B| = |\Ome|$ with
    radius
  $\rho := (\ome_d^{-1}\lam |\Td|)^{\frac 1d}\in
  (0,\frac{\ell}{4})$, which realizes the minimality in \eqref{eq-hall}.  By
  the definition of $\delta$ we have
  $\NN{\nabla u} \geq(1+\delta)\NN{\nabla u_0}$. If $\delta = 0$ there is
  nothing to prove, so that we assume $\delta > 0$ in the following noting that
  $\d = \d(\rho)$ in general.

  \medskip

  Let $\eta > 0$. By assumption \eqref{eq-molli-3} we have
  \begin{align}
    X_\eta \ := \int_\eta^1 K_0(r) r^{d-1} \ dr \ \to \ \infty \qquad \text{as $\eta \to 0$.}
  \end{align}
  Since $c_u(r)-c_u(0)-rc'_u(0)\geq 0$ and $K_0 \geq 0$, for $\eta \in (0, \rho )$
  we have
  \begin{align}
    \frac{\EE_0(u)}{\sig_{d-1}} \
    &\geq \ -  \int_{\eta}^1  c'_{u}(0) K_0(r)r^{d-1} \ dr -  \int_{\eta}^\infty (c_{u}(0)-c_{u}(r)) K_0(r)r^{d-2} \ dr\\
    &= \ -X_\eta c_u'(0)  -  \int_{\eta}^\infty (c_{u}(0)-c_{u}(r)) K_0(r)r^{d-2} dr \\
    &\geq \ -(1+\delta) X_\eta c_{u_0}'(0)  -  \int_{\eta}^\infty (c_{u}(0)-c_{u}(r)) K_0(r)r^{d-2} dr.
  \end{align}
  Using that $\EE_0(u)\leq \EE_0(u_0)$ this implies
  \begin{align}\label{eq-uu0} 
-\delta c'_{u_0}(0) X_\eta \ 
      &\leq \ \int_0^{\eta} (c_{u_0}(r)-c_{u_0}(0)-rc'_{u_0}(0))K_0(r)r^{d-2}\ dr \\
      &\qquad +  \int_{\eta}^{\infty} (c_{u_0}(r)-c_u(r))K_0(r)r^{d-2} \ dr \ %
    =: \ I_1 + I_2.
  \end{align}
  By Lemma \ref{lem-ball-cu} and since $\ell\geq 4\rho$, we have
  $|\Td| |c''_{u_0}(r)| \leq C r \rho^{d-3}$ for $r \leq \frac \rho 2$ and
  $|\Td| (\NI{c_{u_0}} + \rho \NI{c_{u_0}'}) \leq C \rho^d$ for
  $r \geq \frac \rho 2$. This implies
  \begin{align} \label{tay-ball} %
    |c_{u_0}(r)-c_{u_0}(0)-rc'_{u_0}(0)| \ %
    \leq \ \frac{C}{|\Td|} r^3 \rho^{d-3} \qquad\FA{r \geq 0}.
  \end{align}
  Using
  \eqref{tay-ball} we then get
  \begin{align} \label{tat-1} %
    I_1  \ %
    &\leq \  \frac{C}{|\Td|}\rho^{d-3} \int_0^{\eta} K_0(r)r^{d+1} dr.
  \end{align}
  By an application of Proposition \ref{prp-auto2}\ref{itc-ucon} we get
  \begin{align}
    I_2
    &\leq \ \frac {\NPL{u-u_0}{1}{\Td}}{|\Td|}  \int_\eta^{\infty} K_0(r) r^{d-2} \ dr \ %
      \leq \frac{C\delta^2\rho^d}{|\Td|} \int_\eta^\infty K_0(r) r^{d-2} \ dr,   \label{tat-2}
  \end{align}
  where we have used the estimate of the isoperimetric deficit \eqref{eq-hall}
  and the definition of $\delta$ in the last inequality. Combining
  \eqref{eq-uu0}, \eqref{tat-1} and \eqref{tat-2} and recalling $-c'_{u_0}(0)=\frac{\omega_{d-1}}{|\Td|}\rho^{d-1}$ we get
 \begin{align}
   \delta\rho^{d-1} X_\eta \ %
   \leq \ C \rho^{d-3} \int_0^\eta K_0(r) r^{d+1} \ dr + C \delta^2 \rho^d \int_\eta^\infty K_0(r) r^{d-2}\ dr.
 \end{align}
 Inserting the assumption \eqref{eq-kernel} to the above inequality and by \eqref{eq-molli-3}, we obtain
 \begin{align}
   \delta X_\eta \ %
   \leq \ \frac{C}{M}\left(\frac{\eta^2}{\rho^2} +\frac{\delta^2\rho}{\eta}\right)X_\eta+C_{d,K}\delta^2\rho.
 \end{align}
 With the choice $\eta := \delta \rho$, the above inequality gives
 $X_{\eta} \leq \frac{C(1+\delta)}{M}X_\eta + C_{d,K}\delta\rho$. Taking
 $M=4C$ and recalling that $\delta \leq 1$ we thus get $X_\eta \leq C_{d,K}\rho$.  Since $X_\eta\rightarrow \infty$ as
 $\eta\rightarrow 0$, then necessarily we have $\rho \geq \eta \geq C$ for some
 $C=C(d,K)>0$, and hence $\lam |\Td|=\omega_{d}\rho^d \geq c > 0$ for some
 constant $c =c(d,K)> 0$. We thus get a contradiction if
 $\lam|\Td| \leq m_0$ for some $m_0=m_0(d,K)$ sufficiently small.
\end{proof}
\begin{remark} Proposition \ref{prp-ball} is for small volume configurations
  instead of the volume fraction, as the smallness condition on $\lambda $ is
  not independent of $\ell$. Actually it follows from Lemma \ref{lem-small_disk}
  below that when the volume fraction $\lambda $ is sufficiently small, periodic
  balls have smaller energy than a single ball in $\Td$ for sufficiently large $\ell$.
\end{remark}

\section{Stripes and balls configurations} \label{sec-stripes-balls} %

In this section we explicitly compute the limit energy for stripes and lattice
balls for given volume fraction. Throughout this section we assume the kernel is
the prototypical $K_0^{(1)}(r)=r^{-d}$.  In the previous sections we have used
the technical assumption that the configurations are $\Td$--periodic for some
arbitrary periodicity $\ell$. To compare the energy for configurations with
fixed volume fraction, it is natural to consider more general configurations. We
hence write
\begin{definition}[Energy for generalized configurations]\label{defi-full}
  Let $u:\R^d\rightarrow \{0,1\}$ be a measurable function such that
    \begin{align}
      C_u(z) \ = \ \lim_{R\rightarrow \infty}\dashint_{[-R,R]^d}u(x+z)u(x) \ dx
    \end{align}
    exists. Then we set
    \begin{align} \label{def-gen} %
      \EE_0(u)=\int_{\R^d}\frac{C_u(z)-C_u(0)-z\cdot \nabla C_u(0)\chi_{B_1}(z)}{|z|} K_0(z)\ dz.
    \end{align}
\end{definition}
As noted in Remark \ref{rmk-general-pf}, this energy is independent of
the lattice $\Lambda$ where it arises from. In particular, the definition
  \eqref{def-gen} is consistent with and generalizes the definition of the energy
  $\EE_0$ in Theorem \ref{thm-gamma}(ii). We also note that we analogously get a more general definition of the
  energies $\EE_\eps$. However, these will not be used in this article.
\begin{remark}[Minimization in generalized configurations] \text{} %
 It is natural to study the minimization problem among generalized configurations with fixed volume fraction $\lambda = \lim_{\ell\rightarrow \infty} \dashint_{[-\ell,\ell]^d} u(x) \ dx$.  One could minimize $\EE_0$ in \ref{defi-full} in the space of autocorrelation functions
     \begin{align}
       \AA_\lam \ := \ \Big \{C_u:\R^d\rightarrow \R \ : \ C_u(0)=\lambda \}.
     \end{align}
Here $u$ is any function in $BV_{loc}(\R^d;\{0,1\})$ such that $C_u$ is well-defined. One can show that $\mathcal{A}_\lam$
     is convex.  Even though the limit energy is a linear functional in terms
   of autocorrelation functions, the problem still seems to be very hard since
   it is difficult to characterize the space $\mathcal{A}_\lam$.
 \end{remark}
In the rest of the section we calculate and compare the energy for stripe and ball
  configurations with the prototypical kernel $K_0^{(1)}(r)=r^{-d}$ (and the corresponding energy is denoted by $\EE^{(1)}_0$). We recall the generalized harmonic number $H_q$ with index $q \geq 0$ is given by
  \begin{align}\label{def-ghn} %
      H_q \ := \ \int_0^1 \frac{1-t^{q}}{1-t}\ dt.
  \end{align}
\begin{lemma}[Energy of stripes] \label{lem-stripes} \text{} %
  Let $d \geq 1$. Let $\Ome$ be a periodic set of stripes of
    width $d_0 >0$, distance $d_1 > 0$ and volume fraction
    $\lam := \frac{d_0}{d_0+d_1}$. More precisely, for $a := d_0+d_1$  let
     \begin{align} \label{Ome-stripe} %
       \Ome \ := \ \bigcup_{j\in \Z} ( j a, j a + d_0) \times \R^{d-1} .
     \end{align}
   Then the following holds:
       \begin{enumerate}
       \item The function $u := \chi_\Ome$ is $\T_a$--periodic and
   \begin{align}\label{eq-stripe}
        \EE_0^{(1)}(u) \ %
        = \  -\frac{2\lambda\omega_{d-1}}{d_0}\Big(1 + \frac{1}{2}H_{\frac{d-1}{2}}+ \ln \Big(\frac{d_0\sin(\pi\lambda)}{\pi\lambda}\Big)\Big),
   \end{align}
   where $H_{q}$ is the harmonic number (cf. \eqref{def-ghn}).
 \item Among configurations with prescribed $\lam \in (0,1)$ the minimal energy is
   \begin{align} \label{def-es} %
     e_S(\lam) \ := \ \min_{\text{$\lam$ fixed, stripes}} \EE_0^{(1)}(u) \ %
     = \ - 2 \ome_{d-1} e^{\frac{1}{2}H_{\frac{d-1}{2}}}
     \frac{\sin(\pi\lambda)}{\pi}.
   \end{align}
   with optimal width
   $d_{opt}= \frac{\pi\lambda}{\sin(\pi\lambda)}
   e^{-\frac{1}{2}H_{\frac{d-1}{2}}}$.
 \end{enumerate}    
  \end{lemma}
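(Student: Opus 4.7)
The plan is to exploit the one-dimensional structure $u(x) = v(x_1)$ with $v = \chi_{(0,d_0)}$ an $a$-periodic step function, so that $C_u(z) = C_v(z_1)$ and the problem reduces to a one-dimensional calculation.

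\textbf{Step 1 (1D reduction; the harmonic number).} I would start from the $\eps$-regularized form, which sidesteps interpreting $\nabla C_u(0)$:
$$\EE_\eps^{(1)}(u) = \frac{2\omega_{d-1}}{a}\ln(1/\eps) - \int_{|z|>\eps}\frac{\lambda - C_v(z_1)}{|z|^{d+1}}\,dz,$$
using $M_\eps = \omega_{d-1}\ln(1/\eps)$ and the average total variation $2/a$ of $u$. For each fixed $z_1$, integrating out $z'\in\R^{d-1}$ (substitution $|z'|=|z_1|\tan\theta$) yields $\omega_{d-1}/z_1^2$ when $|z_1|>\eps$, and $(\omega_{d-1}/z_1^2)\big[1-(1-z_1^2/\eps^2)^{(d-1)/2}\big]$ when $|z_1|\leq\eps$. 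Since for $\eps<d_0$ one has the exact identity $\lambda - C_v(z_1)=|z_1|/a$ on $[-\eps,\eps]$, the inner piece reduces, after the substitution $v=1-(z_1/\eps)^2$, to $(\omega_{d-1}/a)\int_0^1 (1-v^{(d-1)/2})/(1-v)\,dv = (\omega_{d-1}/a)\,H_{(d-1)/2}$.

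\textbf{Step 2 (cancellation and reduction to a 1D integral).} Using again $\lambda - C_v(t)=t/a$ on $(0,d_0]$, the outer $|z_1|>\eps$ integral contributes $(2\omega_{d-1}/a)\ln(d_0/\eps)$, whose logarithmic divergence cancels exactly with $M_\eps(2/a)$. Letting $\eps\to 0^+$ leaves
$$\EE_0^{(1)}(u) = -\frac{2\omega_{d-1}}{a}\Big[\ln d_0 + \tfrac12 H_{(d-1)/2} + aR\Big], \qquad R := \int_{d_0}^\infty \frac{\lambda - C_v(t)}{t^2}\,dt.$$
Since $\lambda/d_0 = 1/a$, identity (i) reduces to the trigonometric identity $aR = 1 + \ln(\sin(\pi\lambda)/(\pi\lambda))$.

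\textbf{Step 3 (main obstacle: the trigonometric identity).} This is the heart of the proof. Rescaling $t=as$ reduces it to showing
$$L(\lambda) := \int_\lambda^\infty \frac{\lambda - \tilde C(s)}{s^2}\,ds = 1 + \ln\big(\sin(\pi\lambda)/(\pi\lambda)\big),$$
where $\tilde C$ is the $1$-periodic autocorrelation of $\chi_{(0,\lambda)}$ on $\T_1$. I would prove this by differentiating in $\lambda$, assuming $\lambda\leq 1/2$ (the case $\lambda>1/2$ follows from the symmetry $\EE_0(u)=\EE_0(1-u)$ in Proposition~\ref{prp-auto1}\ref{it-Cu-complement}). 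Noting that $\tilde C(\lambda)=0$ and that $\partial_\lambda(\lambda-\tilde C(s))=\chi_{\{s\bmod 1\in[\lambda,1-\lambda]\}}$, the Leibniz rule gives
$$L'(\lambda) = -1/\lambda + \int_\lambda^{1-\lambda}\frac{ds}{s^2} + \sum_{n\geq 1}\int_{n+\lambda}^{n+1-\lambda}\frac{ds}{s^2};$$
the $-1/\lambda$ cancels the $1/\lambda$ from the first interval, and after partial-fraction rearrangement the remaining terms equal $\sum_{n\geq 1}\big(1/(n+\lambda)-1/(n-\lambda)\big) = \pi\cot(\pi\lambda) - 1/\lambda$ by the Mittag--Leffler expansion of cotangent. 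Since $(d/d\lambda)\ln(\sin(\pi\lambda)/(\pi\lambda)) = \pi\cot(\pi\lambda) - 1/\lambda$, it only remains to fix the integration constant by the elementary base case $\lim_{\lambda\to 0^+} L(\lambda) = 1$, which follows from $\int_\lambda^{1-\lambda}\lambda/s^2\,ds\to 1$.

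\textbf{Step 4 (optimization for (ii)).} Writing $\EE_0^{(1)}(u) = -(2\omega_{d-1}\lambda/d_0)\,B(d_0)$ with $B(d_0):= 1 + \tfrac12 H_{(d-1)/2} + \ln(d_0\sin(\pi\lambda)/(\pi\lambda))$, differentiation yields $(d/d d_0)\EE_0^{(1)}(u) = (2\omega_{d-1}\lambda/d_0^2)(B(d_0)-1)$. The unique critical point (which is a minimum, since $B$ is strictly increasing) satisfies $B(d_{\mathrm{opt}})=1$, giving $d_{\mathrm{opt}} = (\pi\lambda/\sin(\pi\lambda))\,e^{-H_{(d-1)/2}/2}$ and $e_S(\lambda) = -2\omega_{d-1}\lambda/d_{\mathrm{opt}} = -(2\omega_{d-1}/\pi)\sin(\pi\lambda)\,e^{H_{(d-1)/2}/2}$.
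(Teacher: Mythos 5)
Your proof is correct, and it takes a genuinely different route from the paper. The paper derives \eqref{eq-stripe} from the boundary--boundary representation of Proposition \ref{prp-equiv-energy}\ref{itr-E2}: it computes the screened interaction $I(\rho)=-\frac{\ome_{d-1}}{a}(\ln\rho+\tfrac12 H_{\frac{d-1}{2}})$ between two parallel interface slices (this transverse integral is where $H_{\frac{d-1}{2}}$ appears there), sums over all pairs of interfaces in the principal-value sense, and evaluates $\sum_{k\ge1}\ln\frac{(ka)^2-d_0^2}{(ka)^2}$ by Euler's product for $\frac{\sin\pi\lam}{\pi\lam}$. You instead stay with the bulk autocorrelation, exploit the one-dimensional structure $C_u(z)=C_v(z_1)$ and the explicit truncation $K_\eps^{(1)}$, extract $\tfrac12 H_{\frac{d-1}{2}}$ from the lens-shaped near-origin correction $1-(1-z_1^2/\eps^2)^{\frac{d-1}{2}}$, and obtain the sine by solving $L'(\lam)=\pi\cot(\pi\lam)-\tfrac1\lam$, $L(0^+)=1$, via the Mittag--Leffler expansion rather than the Euler product. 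What your route buys is self-containedness: it bypasses Proposition \ref{prp-equiv-energy}\ref{itr-E2} entirely, and hence the sequences $\t\delta_j, R_j$ and the principal-value interpretation of the interface sum, which the paper itself flags as delicate precisely for stripes; the paper's route, on the other hand, isolates the ``self-energy of an interface plus lattice sum'' structure that is reused for the ball configurations. Two small points you should tighten: (a) the identity $\lam-C_v(t)=t/a$ on $(0,d_0]$ used in Step 2 requires $d_0\le d_1$, so the restriction $\lam\le\tfrac12$ must be imposed already there, not only in Step 3 --- it is then harmless because the target formula is invariant under $(\lam,d_0)\mapsto(1-\lam,d_1)$ at fixed $a$, so Proposition \ref{prp-auto1}\ref{it-Cu-complement} covers $\lam>\tfrac12$ as you say; (b) the identification $\EE_0^{(1)}(u)=\lim_{\eps\to0}\EE_\eps^{(1)}(u)$ deserves one line, e.g.\ monotone convergence applied separately on $(0,1)$ and $(1,\infty)$ in the representation \eqref{eq-EE1-auto} (the stripe energy is finite by Lemma \ref{lem-ball-cu}(ii)). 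All the explicit computations --- $M_\eps=\ome_{d-1}\ln(1/\eps)$, the transverse integral $\ome_{d-1}/z_1^2$, the substitution giving $\tfrac12H_{\frac{d-1}{2}}$, the cancellation of $\ln(1/\eps)$, the Leibniz/partial-fraction step, the limit $L(0^+)=1$, and the optimization in $d_0$ --- check out and reproduce \eqref{eq-stripe} and \eqref{def-es} exactly.
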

  \begin{proof}
  For the calculation we reduce the problem to the calculation of the
    interaction energy between two parallel slices.  Note that the configuration is $\T_a$--periodic. 

    \medskip

    \textit{(i):} %
    By Proposition \ref{prp-equiv-energy}\ref{itr-E2} the interaction energy
    between two parallel slices $\{0\}\times [0,a]^{d-1}$ and
    $ \{ \rho \} \times \R^{d-1}$, $\rho > 0$, with opposite outer normal reads
    \begin{align}  %
      I(\rho) \ %
      &:= \ \frac{1}{|\T_a|} \int_{\{0\}\times [0,a]^{d-1}}
      \int_{\{\rho\}\times\R^{d-1}} \frac{\ln \frac 1{|x-y|}}{|x-y|^{d-1}}
      \frac{y_1-x_1}{|y-x|} \frac{y_1-x_1}{|x-y|} \ dy dx \\ %
      &= \  - \frac{\rho^2}{2a } \int_{\R^{d-1}} \frac{\ln (\rho^2 +|y'|^2)}{(\rho^2 + |y'|^2)^{\frac{d+1}{2}}} \ dy', \ \label{def-Idef}
    \end{align}
    where we used that the inner integral in the first line above is independent
    of $x$ and $y_1-x_1=\rho$.  An explicit calculation yields \DET{ With
      the change of coordinates $|y'| = \rho r$ we then calculate
    \begin{align}
      I(\rho) \ &= \ - \frac 1a \ln \rho  \int_{\R^{d-1}} \frac{1}{(1 + |z|^2)^{\frac{d+1}{2}}} \ dz - \frac 12 \int_{\R^{d-1}} \frac{\ln(1+|z|^2)}{(1 + |z|^2)^{\frac{d+1}{2}}} \ dz \\%
                &=- \ \frac{\sig_{d-2}}{a}\ln\rho \int_0^\infty \frac{r^{d-2}}{(1+r^2)^{\frac{d+1}{2}}}\ dr - \frac{\sig_{d-2}}{2} \int_0^\infty \frac{\ln (1+r^2)}{(1+r^2)^{\frac{d+1}{2}}}r^{d-2} \ dr\\
    \end{align}
}
    \begin{align}
      I(\rho) %
      \ = \  - \frac{\sig_{d-2}}{2a } \int_0^\infty \frac{\ln \rho^2 + \ln (1 +t^2)}{(1 + t^2)^{\frac{d+1}{2}}} t^{d-2} \ dt \ 
      &= \ - \frac{\ome_{d-1}}{a} \left(\ln\rho+\frac{1}{2} H_{\frac{d-1}{2}}\right). \label{expr-I} %
    \end{align}
    By Proposition \ref{prp-equiv-energy}\ref{itr-E2} with $I(0) := 0$ and
    $I(-\rho):=I(\rho)$ for $\rho<0$ we get
    \begin{align}
      \EE_0^{(1)}(u) \ %
      &= \ -  \frac{\ome_{d-1}}{a^d}P(\Ome) + \frac{2}{a^d} \sum_{k \in \Z} \Big( I(ka+d_0) - I(ka) \Big) \\
      &= \ -\frac{\omega_{d-1}}{a^d}P(\Ome)
        + \frac{2}{a^d} \bigg(I(d_0) +\sum_{k=1}^\infty \big(I(ka+d_0)-2I(ka)+I(ka-d_0 \big)\bigg).
    \end{align}
    The infinite sum in the first line above is taken in the p.v. sense \DET{The
      following shows that the energy only depends on $a$, $\lam$: We need to
      show
      \begin{align}
        \ln d_0 + \sum_{k=1}^\infty \ln ( \frac{k^2-\lam^2}{k^2}) = \ln d_1 + \sum_{k=1}^\infty \ln (\frac{k^2-(1-\lam)^2}{k^2}).
      \end{align}
   And this follows from Euler's formula
    \begin{align}
    \frac{\sin(\pi\lam)}{\pi\lam}=\Pi_{n=1}^\infty \frac{k^2-\lam^2}{k^2},
    \end{align}
    as well as the relation $\frac{d_0}{d_1}=\frac{\lam}{1-\lam}$.}.  Inserting
  the formula for $I(\rho)$ and since $P(\Ome) = 2a^{d-1}$, we arrive at
    \begin{align}
      \EE_0^{(1)}(u) \ %
      &= \ -\frac{2\omega_{d-1}}{a}\bigg(\ln d_0+1 + \frac{1}{2}H_{\frac{d-1}{2}}  + \sum_{k=1}^\infty \ln \frac{(ka+d_0)(ka-d_0)}{(ka)^2}\bigg).
    \end{align}
    Using $d_0=\lambda a$ and the Euler's formula $\Pi_{n=1}^\infty \frac{k^2-\lam^2}{k^2}= \frac{\sin(\pi\lam)}{\pi\lam}$  we arrive at
  \begin{align}
    \EE_0^{(1)}(u) \ = \ %
    -\frac{2 \lambda\omega_{d-1}}{d_0}\bigg(\ln d_0+1+ \frac{1}{2}H_{\frac{d-1}{2}}+\ln \Big( \frac{\sin(\pi\lam)}{\pi\lam}\Big)\bigg).
  \end{align}

  \medskip

  \textit{(ii):} 
  The results  follows by a standard calculation. \DET{Consider
  \begin{align}
    f(a) := \frac{2\ome_{d-1}}{a}\left(c_1 +\ln (a \sin(\pi\lam))\right).
  \end{align}
  Then $f'(a) = 0$ leads to $ c_1 - 1 + \ln (a \sin(\pi\lam)) \ = \ 0$, i.e.
  \begin{align}
    a_{opt}   \ = \ \frac{e^{1 - c_1}}{\sin(\pi\lam)} \ %
    =: \ \frac{c_2}{\sin(\pi\lam)}.
  \end{align}
  Then
  \begin{align}
    \min_{\text{$\lam$ fixed, stripes}} \EE_0^{(1)}(u) \ = \ \frac{2\ome_{d-1} (c_1 +\ln c_2)}{c_2} \sin(\pi\lam).
  \end{align}
  This formular works for all vol fractions and we have $\EE_0^{(1)}(u) \to 0$ as
  $\lam \to 0$ or $\lam \to 1$. The prefactor should be negative for all
  $d$...}
\end{proof}
As a competitor we next consider a lattice of balls, arranged on a Bravais
lattice. We need to fix some notations: For a set of linearly independent
vectors $v_i \in \Rd$, $1 \leq i \leq d$ we consider the Bravais lattice
  \begin{align}
    \Lam \ := \ \Z v_1 \oplus \ldots \oplus \Z v_d  %
  \end{align}
 
  By $|\Lam|$ we denote the volume of the periodicity cell, i.e.
  \begin{align}
    |\Lam| \ := \ |\{ [0,1] v_1 \oplus \ldots \oplus [0,1] v_d \}|.
  \end{align}
  The energy for lattice balls can then be formulated in terms of the Appell series $F_4$, which
  in turn can be expressed using the Pochhammer symbols $(a)_n$, given by
  $(a)_n:=1$ for $n=0$ and $(a)_n:=a(a+1)\cdots (a+n-1)$ for $n\geq 1$. We set
\begin{align}\label{eq-Hd}
  \HH_d(t):= \ F_4 \Big(\tfrac{3}{2}, \tfrac{d+1}{2}; \tfrac{d+2}{2}, \tfrac{d+2}{2}; t, t\Big) %
  =  \sum_{m,n=0}^\infty \frac{(\frac{3}{2})_{m+n}(\frac{d+1}{2})_{m+n}}{(\frac{d+2}{2})_m (\frac{d+2}{2})_n m!n!}t^{m+n}.
\end{align}
With these notations, we have
  \begin{lemma}[Energy of ball configurations]\label{lem-small_disk} %
  Let $d\geq 1$. Let $\Lam \SUS \Rd$ be a Bravais lattice and let $\rho >
  0$. Let $\Ome=\bigcup_{q \in \Lam} B_\rho(q)$ be a periodic set of balls with
  centers on $\Lam$ with radius
    \begin{align} \label{r-small} %
      \rho \ \leq \ \frac 12 \min \big \{ \dist(p,q) \ : p,q \in \Lam \big \}
    \end{align}
   Then 
  \begin{enumerate}
  \item The function $u := \chi_\Ome$ is $\Lam$--periodic and we have
    \begin{align}
      \EE_0^{(1)}(u) \ = \ - \omega_{d-1} \Big(\ln (2\rho) + 1 -\frac{1}{2}H_{\frac{d-1}{2}}\Big)\frac{|\p B_\rho|}{|\Lam|}  + I_{\Lam}|B_\rho|^2 ,
    \end{align}
  where $H_q$ is defined in \eqref{def-ghn} and where 
  \begin{align} \label{lat-int} %
    I_{\Lam} \ := \ \frac{1}{|\Lam|} \sum_{q\in \Lam \BS \{0\}} \frac{\HH_d(\frac{\rho^2}{|q|^2})}{|q|^{d+1}},
    \end{align}
    where the Appell series $\HH_d$ is defined in \eqref{eq-Hd}.
  \item Let $\Lam_0$ be a fixed lattice with $|\Lam_0| = 1$ and let
      $e_{B,\Lam_0}(\lam)$ be the minimal energy among lattices of the
      form $\Lam = a \Lam_0$ for $ a> 0$ and for prescribed volume fraction
    $\lam = \frac{|B_\rho|}{|\Lam|}$. Then
    \begin{align} \label{def-eb} %
      e_{B,\Lam_0}(\lam)\ %
      = \ -(2 \omega_{d-1} de^{-\frac{1}{2}H_{\frac{d-1}{2}}}) \lambda +
      \sum_{e\in \Lam_0\setminus \{0\}}\frac{1}{|e|^{d+1}}
      \OO(\lambda^{2+\frac{1}{d}})
    \end{align}
    for all $\lambda \leq \lambda_0$, where $\lam_0 = \lam_0(\Lam_0)$ is the largest volume fraction
    which can be realized by balls in the lattice. The radius of the optimal ball
    configuration is given by
    $\rho_{opt}= \frac{1}{2}e^{\frac{1}{2}H_{\frac{d-1}{2}}} + \OO(1)$ as
    $\lam \to 0$. 
\end{enumerate}
\end{lemma}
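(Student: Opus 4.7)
The plan is to establish~(i) by applying the geometric representation in Proposition~\ref{prp-equiv-energy}\ref{itr-E2} and splitting the resulting double boundary integral according to the lattice decomposition of $\p\Ome^{per}$, and to derive~(ii) from~(i) by a one-variable optimization. For~(i) I would specialize to $K_0(r)=r^{-d}$, for which $F(r)=\sig_{d-1}/r$, $F(1)=\sig_{d-1}$, and $G(r)=\sig_{d-1}\ln r$. Writing $\p\Ome^{per}=\bigsqcup_{q\in\Lam}\p B_\rho(q)$, the double integral in \eqref{ene-formula} decomposes into a self-term ($q=0$) plus cross-terms ($q\in\Lam\setminus\{0\}$); the non-overlap hypothesis \eqref{r-small} ensures that the cross-terms are uniformly nonsingular and no further regularization is needed there.

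For the self-term I would parametrize $x=\rho\ome_x$, $y=\rho\ome_y$ with $\ome_x,\ome_y\in\S^{d-1}$. The elementary identities $\ome_x\cdot(\ome_y-\ome_x)=-\tfrac{1}{2}|\ome_y-\ome_x|^2$ and the symmetric one in $\ome_y$, combined with $|\ome_y-\ome_x|^2=2(1-\ome_x\cdot\ome_y)$, yield the key algebraic identity
\begin{align}
\Big(\nu(x)\cdot\tfrac{z}{|z|}\Big)\Big(\nu(y)\cdot\tfrac{z}{|z|}\Big)\ =\ -\tfrac{1}{4}|\ome_y-\ome_x|^2.
\end{align}
Rotational invariance reduces the self-term to an integral over a single angle $\theta=\angle(\ome_x,\ome_y)$, and the substitution $u=\sin^2(\theta/2)$ converts it to a Beta-type integral with a logarithmic factor. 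Splitting $\ln(\rho|\ome_y-\ome_x|)=\ln\rho+\ln|\ome_y-\ome_x|$, the $\ln\rho$ piece produces (after absorbing a $\ln 2$) the $\ln(2\rho)$ term, while the remaining part yields the constant $1-\tfrac{1}{2}H_{\frac{d-1}{2}}$, with the harmonic number appearing via $\int_0^1(1-u)^{(d-3)/2}\ln u\,du$ (a derivative of the Beta function at appropriate parameters).

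For the cross-terms, it is cleaner to invoke Lemma~\ref{lem-interaction}: the contribution to $\EE_0^{(1)}(u)$ from interactions with a distant ball $B_\rho(q)$ equals $\tfrac{2}{|\Lam|}\int_{B_\rho(0)}\int_{B_\rho(q)}|x-y|^{-(d+1)}\,dx\,dy$. After rescaling $x=\rho\tilde x$, $y=q+\rho\tilde y$, I would Taylor-expand $|\rho(\tilde x-\tilde y)-q|^{-(d+1)}$ in the small parameter $t=\rho/|q|\leq\tfrac{1}{2}$, obtaining a double power series in $\rho^2/|q|^2$; a careful bookkeeping matches the coefficients (spherical averages of tensor monomials over $B_1\times B_1$) against the Pochhammer ratios in \eqref{eq-Hd}, identifying the series with $\HH_d(\rho^2/|q|^2)$ and producing the lattice sum $I_\Lam|B_\rho|^2$ after summation over $q$. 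For~(ii), I would substitute $|\Lam|=a^d$ and $\rho=a(\lam/\ome_d)^{1/d}$ into the formula from~(i). Using $\HH_d(0)=1$ and $\rho/a=O(\lam^{1/d})$, the interaction term is bounded by $\lam^{2+1/d}\sum_{e\in\Lam_0\setminus\{0\}}|e|^{-(d+1)}$; the perimeter term reduces to the scalar function $a\mapsto -C_d\,a^{-1}(\ln(2\rho)+1-\tfrac{1}{2}H_{\frac{d-1}{2}})$, whose derivative vanishes at $\ln(2\rho_{opt})=\tfrac{1}{2}H_{\frac{d-1}{2}}$, giving $\rho_{opt}=\tfrac{1}{2}e^{H_{\frac{d-1}{2}}/2}$ and the leading term $-2\ome_{d-1}d\,e^{-H_{\frac{d-1}{2}}/2}\lam$.

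\textbf{Main obstacle.} The main difficulty is the identification of the two-ball integral with the Appell series $\HH_d$: the multipole (Taylor) expansion produces a double sum whose coefficients are nested spherical integrals of tensor monomials over $B_1\times B_1$, and matching these against the Pochhammer ratios $(\tfrac{3}{2})_{m+n}(\tfrac{d+1}{2})_{m+n}/((\tfrac{d+2}{2})_m(\tfrac{d+2}{2})_n)$ requires a Funk--Hecke-style evaluation of the angular integrals. All remaining steps---the integration by parts, the self-interaction spherical calculation, and the elementary one-variable optimization---are comparatively routine.
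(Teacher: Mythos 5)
Your overall architecture matches the paper's: both proofs split the energy into the self--energy of a single ball plus the pairwise interactions via the sublinearity/interaction formula of Lemma \ref{lem-interaction}, and part (ii) in both cases is the same one--variable minimization of the self--energy in $\rho$ (your computation $\ln(2\rho_{opt})=\tfrac12 H_{\frac{d-1}{2}}$ and the leading term $-2d\ome_{d-1}e^{-\frac12 H_{(d-1)/2}}\lam$ are exactly what the paper obtains from \eqref{Eself-2}). For the self--energy you take a genuinely different route: you use the boundary--integral representation of Proposition \ref{prp-equiv-energy}\ref{itr-E2} with the identity $\bigl(\nu(x)\cdot\tfrac{z}{|z|}\bigr)\bigl(\nu(y)\cdot\tfrac{z}{|z|}\bigr)=-\tfrac14|\ome_y-\ome_x|^2$ and a Beta--type angular integral; the exponent bookkeeping indeed collapses to $\int_0^1(1-t)^{\frac{d-3}{2}}\ln t\,dt=-\tfrac{2}{d-1}H_{\frac{d-1}{2}}$, so this works and is a legitimate alternative to the paper's computation, which instead integrates by parts in the autocorrelation variable (Remark \ref{rmk-truncation}) and feeds in the explicit lens--volume formula of Lemma \ref{lem-ball-cu}(i) via the substitution $r=\rho\sqrt{1-s}$, arriving at the same $-\tfrac12 H_{\frac{d-1}{2}}$ in \eqref{I1}. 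The paper's route is somewhat more elementary since it avoids the principal--value limit in Proposition \ref{prp-equiv-energy}\ref{itr-E2} (which, as stated, also needs $d\geq 2$ for the $R_j$--condition, whereas the lemma claims $d\geq1$).

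The one substantive gap is the step you yourself flag as the main obstacle: identifying
\begin{align}
\dashint_{B_\rho(q)}\dashint_{B_\rho(0)}\frac{dx\,dy}{|x-y|^{d+1}}\ =\ \frac{1}{|q|^{d+1}}\,\HH_d\Big(\frac{\rho^2}{|q|^2}\Big).
\end{align}
Your proposed multipole expansion in $t=\rho/|q|$ would require matching the spherical averages of tensor monomials over $B_1\times B_1$ against the Pochhammer ratios in \eqref{eq-Hd}, and you do not carry this out; note also that $F_4$ evaluated at equal arguments $(t,t)$ is really a single power series in $t$ whose $k$--th coefficient is a finite sum over $m+n=k$, so a term--by--term match is delicate. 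The paper sidesteps this entirely in Appendix \ref{sec:inter_balls} (Lemma \ref{lem-inter}): it writes the two--ball integral by Plancherel using $\widehat{\chi_{B_1}}(\xi)=|\xi|^{-d/2}J_{d/2}(|\xi|)$, reduces to the one--dimensional Bessel integral $\int_0^\infty J_{\frac{d-2}{2}}(s)\,J_{\frac d2}(s/\lam)^2\,s^{-\frac{d-2}{2}}ds$, and quotes Bailey's closed--form evaluation of this Weber--Schafheitlin--type integral as an Appell $F_4$. If you want to complete your argument without Fourier analysis you would need to actually perform the Funk--Hecke reduction; otherwise you should import the appendix lemma as the paper does. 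Everything else in your plan is sound.
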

\begin{proof}
  \textit{(i):} We introduce the full-space (radially-symmetrized)
  autocorrelation function for the single ball $\tilde{u}:=\chi_{B_\rho}$ by
  \begin{align}
    \t c_u(r) \ %
    := \ \dashint_{\S^{d-1}} \int_{\R^d} \t u(x+r\omega) \t u(x) \ dx d\omega
  \end{align}
  Using Lemma
    \ref{lem-interaction}, we decompose the energy as
  \begin{align}\label{eq:decomp}
    \EE_0^{(1)}(u) \ = \  \frac 1{|\Lam|}\Big(E_{\rm self}(\rho) + I_{\rm int}(\Lam,\rho)\Big),
  \end{align}
  Here,  $E_{\rm self}$ is the self--interaction energy of a single ball, i.e.
  \begin{align}
    E_{\rm self}(\rho)\ %
    := \ \sig_{d-1} \int_{0}^1\frac{\tilde c_u(r)-\t c_u(0)-\t c'_u(0)r}{r^2} \ dr + \sig_{d-1} \int_1^\infty \frac{\t c_u(r)-\t c_u(0)}{r^2}dr.
  \end{align}
  Furthermore, the interaction energy of a single ball $B_\rho$ with other
  copies is
   \begin{align}\label{eq-inter-0}
     I_{\rm int}(\Lam,\rho)   \ :=  \ \sum_{q\in \Lam \BS \{0\}}\int_{B_\rho(q)}\int_{B_\rho(0)}\frac{1}{|x-y|^{d+1} }\ dxdy
   \end{align}

  \medskip

  \textit{Computation of $E_{\rm self}(\rho)$:} By definition we have
  $\t c_{u}(r)=0$ for $r\geq 2\rho$.  Using Remark \ref{rmk-truncation} and
  integration by parts we have
  \begin{align}
    E_{\rm self}(\rho) \ %
    &= \ \sig_{d-1} \Big( \t c'_{u}(0)\ln(2\rho) + \int_0^{2\rho} \frac{\t c_{u}(r)-\t c_{u}(0)-r\t c'_{u}(0)}{r^2}dr - \frac{\t c_{u}(0)}{2\rho} \Big) \\
    &= \  \sig_{d-1}\t c'_{u}(0) \Big( \ln(2\rho) + 1 + \int_0^{2\rho}\frac{\t c'_u(r)-\t c'_u(0)}{\t c'_u(0) r}dr \Big). \label{dieda} %
  \end{align}
 Since $\t c_u(r) = |\Lam| c_u(r)$ for $0 \leq r\leq 2\rho$, in view
    of the formula in Lemma \ref{lem-ball-cu}(i) and with the change of
  variables $r = \rho \sqrt{1-s}$ we get
  \begin{align} \label{I1} %
    \int_0^{2\rho}\frac{\t c'_u(r)-\t c'_u(0)}{\t c'_u(0) r}dr \ %
    = \ - \frac 12 \int_0^1 \frac{1-t^{\frac{d-1}{2}}}{1-t}\ dt \ %
    \lupref{def-ghn}= \ - \frac 12 H_{\frac{d-1}{2}} %
  \end{align}
 Using the formula $\t c'_u(0) = -\frac{\ome_{d-1}}{\sig_{d-1}}|\p B_\rho|$ we arrive at
  \begin{align} \label{Eself-1} %
    E_{\rm self}(\rho) \ %
    &= \ \ome_{d-1} \left(\ln \frac 1{2\rho}-1+\frac{1}{2}H_{\frac{d-1}{2}}\right) |\p B_\rho|. 
  \end{align}

  \medskip
  
  \textit{Estimate of $I_{\rm int}(\Lam,\rho)$}:
  For the estimate of $I_{\rm int}(\Lam,\rho)$ we note that by \eqref{eq:lb_interaction} in Appendix \ref{sec:inter_balls} we have
    \begin{align}\label{eq-inter}
      \frac{ I_{\rm int}(\Lam,\rho)}{|\Lam|}  \ %
      =  \ |B_\rho|^2 I_\Lam.
    \end{align}
    Together with \eqref{Eself-1} and
    \eqref{eq:decomp}, this yields (i).
   
    \medskip

 \textit{(ii):} Using $\lam = \frac{|B_\rho|}{|\Lam|}$, we can further express $E_{\rm self}$ in terms of $\lam$ and $\rho$ as
 \begin{align} \label{Eself-2} %
   \frac{E_{\rm self}(\rho)}{|\Lam|} \ \lupref{Eself-1}= \ \frac{d\omega_{d-1}\lambda}{\rho} \Big(\ln \frac 1{2\rho} - 1 + \frac 12 H_{\frac{d-1}{2}}\Big),
 \end{align}
 We note that $I_\Lam = \frac 1{a^{2d+1}} I_{\Lam_0}$, where
   \begin{align}
   I_{\Lam_0}:=\sum_{e\in \Lam_0\setminus \{0\}}\frac{\HH_d(\frac{\tilde{\rho}^2}{|e|^2})}{|e|^{d+1}},\quad \tilde{\rho}=\frac{\rho}{a}.
   \end{align}
    Using this and
 $\lambda=\frac{|B_\rho|}{a^d}$ and $\ome_d \rho^d = \lam a^d$, the
 averaged interaction energy can be rewritten in terms of $\rho$ and $\lambda$ as
 \begin{align} \label{Iint-2} %
   \frac{I_{\rm int}(\Lam, \rho)}{|\Lam|} \ %
   = \ \frac{\lambda^2}{a} I_{\Lam_0} \ %
   = \ \frac{\lambda^{2+\frac 1d}}{\ome_d^{\frac{1}{d}} \rho} I_{\Lam_0}.%
  \end{align}
  From Lemma \ref{lem-inter} and that
  $\frac{\tilde{\rho}}{|e|}\leq \frac{1}{2}$, which follows from
  \eqref{r-small}, we have that
  $\sum_{e\in \Lam_0\setminus \{0\}}\frac{1}{|e|^{d+1}}\leq I_{\Lam_0}\leq
  C\sum_{e\in \Lam_0\setminus \{0\}}\frac{1}{|e|^{d+1}}$.
  Thus the self--interaction energy in \eqref{Eself-2} is of leading order in $\lambda$
  for $\lambda\ll 1$ for fixed lattice $\Lam_0$. By minimization \eqref{Eself-2}
  in $\rho$ we obtain
    \begin{align}
      \min_{\rho} \frac{E_{\rm self}(\rho)}{|\Lam|}  %
      =  \frac{E_{\rm self}(\rho_{\rm opt}^*)}{|\Lam|}
      =  -(2 \omega_{d-1} de^{-\frac{1}{2}H_{\frac{d-1}{2}}}) \lambda
      \quad %
      \text{with $\rho_{\rm opt}^*:=\frac{1}{2}e^{\frac{1}{2}H_{\frac{d-1}{2}}}$.}
    \end{align}
    Since expression \eqref{Iint-2} is of lower order in $\lam$, (ii) follows by
    a standard argument.  \DET{For fixed $\lam$ we want to minimize the energy.
    For small volume fraction we can ignore the $I$ term (independently of $a$).
    We hence consider
  \begin{align}
    \t f(a) \ = \ \frac{d \ome_{d-1}}{a \ome_d^{\frac 1d}}\Big(\t c_1 -\ln (a \lam) \Big), \ %
  \end{align}
  $\t f'(a) = 0$ leads to $\t c_1 - 1 = \ln (a \lam)$, i.e.
  $\t a_{opt} \ = \ \frac{e^{\t c_1 - 1}}{\lam} \ %
  =: \ \frac{\t c_2}{\lam}$. This yields
  \begin{align}
    \min_{\text{$\lam$ fixed, balls}} \EE_0^{(1)}(u) \ = \  \frac{d \ome_{d-1}}{\t c_2 \ome_d^{\frac 1d}}  \big(\t c_1 -\t c_2 \big) \lam %
  \end{align}
}
\end{proof}

\DET{WS: For both energies, the leading order terms are of the same form
  \begin{align}
    f(t)=-\frac{\ln t+A}{t},
  \end{align}
  where for stripes $A_1=1+h$ and for balls $A_2=1+\ln 2-h$, where $h:=\frac{1}{2}H_{\frac{d-1}{2}}$. Standard derivative test gives: $f$ attains the minimum at $t=e^{1-A}$. This gives $d_{min}= e^{-h}$ and $\rho_{min}=e^{-\ln 2+ h}=e^{h}/2$, and
  \begin{align}
     \min \EE_{0,stripes}=-2\ome_{d-1}\lambda e^{h},\quad \min \EE_{0,balls}=-2\ome_{d-1}\lambda (de^{-h}).
  \end{align}
  When $\lambda \ll 1$, it thus suffices to compare $e^h$ and
  $de^{-h}$. Actually using the properties of the harmonic number it is not hard
  to see that $e^h\leq de^{-h}$ for all $d$. Thus balls are minimizers.

 \medskip
 
 One can get the lower bound of the interaction energy 
 \begin{align}
 I(\rho,\lambda)\geq \frac{\lambda^{2+\frac{1}{d}}} {\rho} (2d) \sum_{k=1}^\infty \frac{1}{(k+1)^{d+1}}.
 \end{align}
 This is a very rough estimate using the previous argument (it is likely that there are better estimates for interaction energy for lattice balls in the literature. Do you know any?). Then when $d=2$ (at least) and $\lambda =\frac{1}{2}$, comparing with the energy for stripes one conclude that stripes has lower energy.

\medskip

HK: I think we should we should use an estimate as above
      \begin{align}\label{eq-inter-3}
      I   \ %
      &=  \ \t \lam^{2d} a^{d-1} \sum_{e\in L \{0\}}\dashint_{B_{\t \lam}(e)}\dashint_{B_{\t \lam}(0)}\frac{1}{|x-y|^{d+1} }\ dxdy \\
      &=  \ \t \lam^{2d} a^{d-1} \Big(\sum_{e\in L \{0\}} \frac{1}{|e|^{d+1}} \Big) %
        + \OO\Big(\frac{\t \lam^{2d+1}}{a} \sum_{e\in \Z^d\BS \{0\}} \frac{C}{|e|^{d+2} } \Big) \\
      &\geq  \ C_L \lam^{2} a^{d-1}  \big( 1 - C \lam^{\frac{1}{d}}) \big).
      \end{align}
      where
      \begin{align}
        C_{L} \ = \ \sum_{e\in L \{0\}} \frac{1}{|e|^{d+1}}
      \end{align}
      depends on the lattice. And we can say that $C_d$ is bounded by the energy
      of the square lattice. Is that your question?  \tbl{This is still for
      small $\lambda$, as $C$ in the bracket is not specified. What I mean is to
      get a uniform lower bound for all $\lambda$. The bound in rmk 4.8 is
      perhaps enough, or it is also not hard to show
      $I\geq \frac{1}{2}C_L \lam^2 a^{d-1}$.

      \medskip
      
      HK: We cannot get volume fraction
      near $1$ just using balls (dense sphere packing problem). If the balls
      getting larger then they will start to touch each other in which case the
      energy will probably diverge to infinity.  Hence I do not understand what
      you mean by all $\lam$?

      WS: for all $\lambda$ such that the constraint $2\rho\leq \ell$ is
      satisfied. $\lambda=\frac{1}{2}$ is in this range, but this $\lambda$ is
      not small. I think it is interesting to say for some $\lambda$ large the
      energy for balls is not optimal.}}  We compare the minimal energy for
  stripes and balls in \textit{(ii)} of Lemma \ref{lem-stripes} and Lemma
  \ref{lem-small_disk} with fixed volume fraction $\lambda$.
  
  \begin{proposition}[Stripes vs balls] %
    Let $K_0^{(1)}(r)= r^{-d}$ and let $\Lambda_0$ be a Bravais lattice with $|\Lam_0|=1$. Let $e_S$, $e_{B,\Lam_0}$ be defined in \eqref{def-eb}
    and \eqref{def-es}. Then there are $\lam_0=\lam_0(d,\Lam_0)>0$ and
    $\d_0>0$ independent of $\Lam_0$ such that
    \begin{enumerate}
    \item  if $0 < \lam < \lam_0$, then $e_{B,\Lam_0}(\lam) \ < e_S(\lam)$.%
      \item if $d=2$ and $|\lam - \frac 12| < \d_0$, then $e_S(\lam) \ < e_{B,\Lam_0}(\lam)$.%
       \end{enumerate}
   \end{proposition}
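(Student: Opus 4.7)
The approach is to use the closed-form expressions for $e_S$ and $e_{B,\Lam_0}$ from Lemma \ref{lem-stripes}(ii) and Lemma \ref{lem-small_disk}(ii) and to compare them directly in each of the two regimes. For (i) I expand both functions about $\lam = 0$; for (ii) I specialize to $\lam = \tfrac{1}{2}$, minimize the ball energy explicitly over the scaling parameter of the lattice, and then extend to a $\lam$-neighborhood by continuity.

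For (i), as $\lam \to 0$ one has $e_S(\lam) = -2\omega_{d-1} e^{h} \lam + \OO(\lam^3)$ and $e_{B,\Lam_0}(\lam) = -2\omega_{d-1} d\, e^{-h}\lam + \OO(\lam^{2+1/d})$, where $h := \tfrac{1}{2} H_{(d-1)/2}$. The claim then reduces to the scalar inequality $d\,e^{-h} > e^h$, i.e. $d > e^{H_{(d-1)/2}}$. At $d=2$ this reads $2 > e^{2-2\ln 2} = e^2/4 \approx 1.85$; for $d \geq 3$ it follows from the elementary estimate $H_{(d-1)/2} \leq H_{\lfloor d/2\rfloor} < \ln d$ for classical harmonic numbers. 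Hence for all sufficiently small $\lam$ the strictly larger (in absolute value) leading coefficient of $e_{B,\Lam_0}$ dominates the $\OO(\lam^{2+1/d})$ remainder and produces the strict inequality.

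For (ii), specialize to $d=2$, $\lam = \tfrac 12$: with $h = 1-\ln 2$, the stripe formula yields $e_S(\tfrac 12) = -2e/\pi$. For balls at $\lam = \tfrac 12$, the volume-fraction constraint fixes $\rho^2/a^2 = 1/(2\pi)$ for $\Lam = a\Lam_0$, so the argument $\rho^2/|q|^2$ of $\HH_2$ inside the definition of $I_\Lam$ becomes independent of $\rho$. A direct rescaling in Lemma \ref{lem-small_disk}(i) collapses the total energy to the one-parameter expression
\begin{align}
  \EE_0^{(1)}(u)\ =\ \frac{1}{\rho}\Bigl(-2\ln(4\rho) + K'_{\Lam_0}\Bigr),\qquad
  K'_{\Lam_0}\ :=\ \frac{1}{4\sqrt{2\pi}}\sum_{e \in \Lam_0\setminus\{0\}} \frac{\HH_2\bigl(1/(2\pi|e|^2)\bigr)}{|e|^3}.
\end{align}
Stationarity in $\rho$ (the critical point is $\rho_{opt} = e^{1+K'_{\Lam_0}/2}/4$) yields the closed form $e_{B,\Lam_0}(\tfrac 12) = -8\,e^{-1-K'_{\Lam_0}/2}$, so the desired strict inequality $e_{B,\Lam_0}(\tfrac 12) > e_S(\tfrac 12)$ reduces to the scalar condition $K'_{\Lam_0} > 2(\ln(4\pi)-2) \approx 1.06$.

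The main obstacle is to establish this inequality uniformly in $\Lam_0$. Since every Taylor coefficient of the hypergeometric series $\HH_2$ is strictly positive, one has $\HH_2(t) \geq 1 + \tfrac 32 t + \tfrac{25}{16} t^2$ for $t \geq 0$, which reduces the task to lower bounds on the Epstein zeta values $\zeta_{\Lam_0}(s) := \sum_{e\neq 0} |e|^{-s}$ at $s = 3, 5, 7$. By the classical theorem of Montgomery, the hexagonal lattice minimizes $\zeta_{\Lam_0}(s)$ among unit-area Bravais lattices for every $s > 1$, so uniformity in $\Lam_0$ reduces to a single numerical check at the hexagonal lattice, where the inequality holds with a strictly positive margin. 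Finally, both $\lam \mapsto e_S(\lam)$ and $\lam \mapsto e_{B,\Lam_0}(\lam)$ are continuous (the latter via the explicit closed form, with modulus of continuity uniform in $\Lam_0$ thanks to the margin above), so the strict inequality at $\lam = \tfrac 12$ persists on $|\lam - \tfrac 12| < \delta_0$ for some $\delta_0 > 0$ independent of $\Lam_0$.
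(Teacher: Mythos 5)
Your overall strategy coincides with the paper's: part (i) is the same leading--order comparison reducing to $d\,e^{-h}>e^{h}$ with $h=\tfrac12 H_{\frac{d-1}{2}}$, and part (ii) follows the same skeleton (explicit value $e_S(\tfrac12)=-2e/\pi$, collapse of the ball energy at $\lam=\tfrac12$ to a one--parameter family in $\rho$, minimization giving $-8e^{-1-K/2}$, reduction to a lattice--sum lower bound minimized at the hexagonal lattice via Rankin/Montgomery, and continuity in $\lam$). Two concrete constants in your write--up are wrong, however, and the second one leaves a real gap. First, in (i), your chain $H_{\frac{d-1}{2}}\leq H_{\lfloor d/2\rfloor}<\ln d$ is false: classical harmonic numbers satisfy $H_n>\ln n$, and e.g.\ $H_2=\tfrac32>\ln 4$, so the chain fails already at $d=4$ (and at $d=6,8$). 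The target inequality $e^{H_{(d-1)/2}}<d$ is nevertheless true for all $d\geq 2$ (check $d=2,\dots,5$ directly and use $H_q=\ln q+\gamma+\OO(q^{-1})$ for large $d$); the paper simply asserts it without proof.

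Second, and more seriously, the whole of part (ii) rests on a numerical margin, and your stated constants do not produce one. The threshold is $K'_{\Lam_0}>2(\ln(4\pi)-2)\approx 1.0617$, i.e.\ $\sum_{e\neq 0}|e|^{-3}\HH_2\big(\tfrac{1}{2\pi|e|^2}\big)>4\sqrt{2\pi}\cdot 1.0617\approx 10.64$. Your Taylor bound $\HH_2(t)\geq 1+\tfrac32 t+\tfrac{25}{16}t^2$ understates the actual coefficients of \eqref{eq-Hd}: for $d=2$ the linear coefficient is $2\cdot\frac{(3/2)^2}{2}=\tfrac94$ (this is exactly the $\frac{3(d+1)}{d+2}$ of Lemma \ref{lem-inter}) and the quadratic one is $\tfrac{1125}{192}\approx 5.86$. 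With your coefficients the hexagonal check gives roughly $8.9+\tfrac{3}{4\pi}(4.72)+\tfrac{25}{64\pi^2}(3.75)\approx 10.2<10.64$ --- no positive margin --- whereas with the correct coefficients one gets $\approx 8.9+1.69+0.56\approx 11.1>10.64$, which closes the argument with a few percent to spare. So the idea of retaining the $\HH_2$ factors is not only sound but essential (the cruder bound $\HH_2\geq 1$ used in the paper requires $\zeta(H_{nor})>10.64$, while $\zeta(H_{nor})\approx 8.9$, so keeping these factors is what makes the comparison work); you just have to carry the correct coefficients through the computation and actually record the numerical values of the hexagonal Epstein sums at exponents $3,5,7$. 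Two smaller points: the unconstrained $\rho$--minimum is only a lower bound for $e_{B,\Lam_0}(\tfrac12)$ (the admissible range of $\rho$ is constrained by \eqref{r-small}), which is all you need but should be stated as such; and for the final step the cleanest way to get $\delta_0$ independent of $\Lam_0$ is to note that $e_{B,\Lam_0}(\lam)$ is bounded below by an explicit continuous function of $\lam$ alone (replace the lattice sums by their hexagonal minima) which exceeds $e_S$ at $\lam=\tfrac12$.
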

  \begin{proof}
    \textit{(i):} 
   Comparing the leading order (in $\lambda$) terms of
    $e_S(\lambda)$ and $e_{B,\Lam_0}(\lambda)$ (which is independent of $\Lam_0$) 
    and using that
     \begin{align}
       \lim_{\lam \to 0} \frac {e_{S}(\lam)}{2 \ome_{d-1}\lam} \  %
       = \ - e^{\frac{1}{2}H_{\frac{d-1}{2}}} \ %
       > \ - de^{-\frac{1}{2}H_{\frac{d-1}{2}}} \ %
       = \ \lim_{\lam \to 0} \frac {e_{B,\Lam_0}(\lam)}{2 \ome_{d-1}\lam}%
     \end{align}
     for all $d\geq 2$ we can conclude the assertion. 

     \medskip

     \textit{(ii):} For large volume fraction the interaction energy is no more
    of lower order. Moreover, noting that balls have smaller self-interaction
    energy than stripes, we have to estimate the lower bound of interaction
    energy for balls. When $d=2$, by Lemma \ref{lem-stripes}(ii) and since
    $H_{\frac{1}{2}}=2-2\ln 2$ we have
  \begin{align}
  e_S(\frac{1}{2})=-\frac{2e}{\pi}.
  \end{align}
  Now we estimate the lower bound for the interaction energy among Bravais
  lattices in $\R^2$.  Given any Bravais lattice $\Lam \SUS \R^2$ with
  $a^2=|\Lam|$ and $\Lam_0:=\frac{1}{a}\Lam$, from
  \eqref{eq:lb_interaction} in Lemma \ref{lem-inter} and using that $\lam a^2=\pi \rho^2$ we
  have
\begin{align}
I_{\rm int} (\Lam, \rho)\geq \frac{|B_\rho|^2}{a^{5}} \sum_{e\in \Lam_0\BS \{0\}} \frac{1}{|e|^{3}}=\frac{\lam^{\frac{5}{2}}}{\sqrt{\pi}\rho}\sum_{e\in \Lam_0\BS \{0\}} \frac{1}{|e|^{3}}.
 \end{align}
Thus combining with the self-interaction energy in Lemma \ref{lem-small_disk} we have that for $u=\bigcup_{q\in \Lam} B_\rho(q)$ with $0<\rho\leq \frac{1}{2} \min_{p, q\in \Lambda}|p-q|$ and $\lam=\frac{|B_\rho|}{|\Lam|}=\frac{1}{2}$, 
  \begin{align}\label{eq-lb-2d}
 \EE_0(u)\geq -\frac{2}{\rho}\ln (4\rho) + \frac{\zeta(\Lam_0)}{4\sqrt{2\pi}\rho},\qquad \zeta(\Lam_0):= \sum_{e\in \Lam_0 \BS \{0\}}\frac{1}{|e|^3}.
  \end{align}
 By Rankin \cite{Rankin53}, $\zeta(\Lam_0)$ attains the minimum at the triangle lattice $H_{nor}:=\sqrt{\frac{2}{\sqrt{3}}}(\Z(1,0)\bigoplus \Z(\frac{1}{2}, \frac{\sqrt{3}}{2}))$ among all $2d$ Bravais lattices with volume $1$, and furthermore from its explicit expression we have that $\zeta(H_{nor})>8$.
 Thus \eqref{eq-lb-2d} together with the standard minimization   gives
 \begin{align}
\min_{\Lam_0} e_{B,\Lam_0}(\frac{1}{2}) > \min_{\rho} \Big(-\frac{2}{\rho}\ln (4\rho) + \frac{8}{4\sqrt{2\pi}\rho}\Big)> e_S(\frac{1}{2}).
 \end{align}
 Since the energy functionals are continuous in $\lambda$ , then there is $\delta_0>0$ independent of $\Lam_0$ such that if $\lambda \in (\frac{1}{2}-\delta_0, \frac{1}{2}+\delta_0)$ 
 stripes have strict smaller energy than any $2d$ lattice balls with the volume fraction $\lam$. 
 \end{proof}
 
 \begin{remark}[Triangular lattice in 2d] %
   We conjecture that in $2d$ for sufficiently small volume fraction $\lambda$,  the triangle lattice $H_{nor}$ has the smallest energy among all lattice of balls. Indeed, we recall from Lemma \ref{lem-small_disk}(i) that the energy consists of the self-interaction energy and the interaction energy. In view of \eqref{Eself-2}, the
   self-interaction energy is independent of the lattice $\Lam_0$. For the interaction energy, cf. \eqref{Iint-2}, we expect that
   $I_{\Lam_0}$ achieves its minimum for the triangle lattice $H_{nor}$ if
   $\lambda$ is sufficiently small. To prove such a result one would (at
     least) need to extend the methods in \cite{BK18} to the
     case of non--integrable potentials and we leave it for the future work. 
   \end{remark}

   \medskip
   
   \appendix
   
   \section{Connection to the results by D\'avila}\label{sec:davila}
   We start by recalling the classical result by D\'avila \cite{Da02}. Let $K_\eps:\R^d\rightarrow \R_+$ be a family of nonnegative radially symmetric kernels, which satisfy 
   \begin{align}\label{eq:assumption_davila}
     \lim_{\eps\rightarrow 0} \frac{\|K_\eps\|_{L^1(\R^d\setminus B_\delta)}}{\|K_\eps\|_{L^1(\R^d)}} \ = \ 0 \qquad \text{ for all  } \delta>0.
   \end{align}
 Let $\Omega\subset \R^d$ be a bounded set with finite perimeter. Then D\'avila showed that for $u:=\chi_{\Omega}$ one has
   \begin{align} \label{eq:first_order}
\lim_{\eps\rightarrow 0} \frac{1}{\|K_\eps\|_{L^1(\R^d)}}\int_{\R^d}\int_{\R^d} \frac{|u(x+z)-u(x)|}{|z|} K_\eps(z) \ dxdz =  \frac{2\ome_{d-1}}{\sig_{d-1}}\|\nabla u\|.
   \end{align}
  
   Using the autocorrelation function, one can give a simpler proof for
   \eqref{eq:first_order} as we explain below: Let
   \begin{align}
        \t c_u(r) \ := \ \dashint_{\S^{d-1}}\int_{\R^d}u(x+z)u(x) dz
   \end{align}
   be the (full space) radially symmetrized autocorrelation
   function. One can easily get the analogous statements as in Proposition
     \ref{prp-auto2} for the full space symmetrized autocorrelation
     function. Then it follows from assumptions \eqref{eq:assumption_davila} on
   $K_\eps$ as well as the existence of $\t c'_u(0)$, cf. Proposition
   \ref{prp-auto2}, that
   \begin{align}\label{eq:auto_bounded}
     \lim_{\eps\rightarrow 0} \frac{\sig_{d-1}}{\|K_\eps\|_{L^1(\R^d)}}\int_{0}^\infty \left(\t c_u(0)-\t c_u(r)\right) K_\eps(r) r^{d-2}  \ dr \ %
     = \ -\t c'_u(0).
   \end{align}
   With this at hand, \eqref{eq:first_order} directly follows from writing the
   above equation in terms of $u$ (cf. Proposition
   \ref{prp-auto2}\ref{itc-diffquot} and \ref{itc-perim} in the non-periodic
   setting).
 
We note that the decay of $K_\eps$ at infinity is not needed when $\Ome$ is bounded. Furthermore, instead of $\|K_\eps\|_{L^1(\R^d)}$ it suffices to normalize the LHS of \eqref{eq:first_order} by $\|K_\eps\|_{L^1(B_{r_0})}$ for arbitrary $r_0>0$.  
 
 \medskip
 
 Now we comment on our assumptions on the kernels $K_\eps$ as well as the
 convergence in \eqref{first-order}.  Since in this paper we are interested in
 the second order asymptotic expansion for the perimeter, we assume that
 $K_\eps\nearrow K_0$ as $\eps\searrow 0$ for some measurable function
 $K_0:\R^d\rightarrow \R_+$ with $\|K_0\|_{L^1(\R^d)}=\infty$,
 i.e. \eqref{eq-molli-1}.  Moreover, in the periodic setting one needs to
 consider the interaction of $\Ome$ with its periodic copies, consequently the
 autocorrelation function $\t c_u(r)-\t c_u(0)$ does not decay as
 $\rightarrow \infty$ but remains uniformly bounded.  Thus in view of
 \eqref{eq:auto_bounded} we assume $\int_1^\infty K_0(r) r^{d-2} \ dr<\infty$
 (cf. \eqref{eq-molli-3}) such that the energy functional is well-defined. Under
 these assumptions \eqref{first-order} holds true, which is an analogue of
 \eqref{eq:first_order} in the periodic setting and whose proof is a simple
 modification of that for \eqref{eq:first_order}. We also mention that the
 monotone convergence of $K_\eps$ is used when we prove the $\Gamma$-convergence
 of $\EE_\eps$ (cf. Theorem \ref{thm-gamma_limit}) to avoid the concentration
 effect. It might be possible to weaken this assumption, but this was not
   our aim in this paper.
 
   \section{Interaction energy of two balls}\label{sec:inter_balls}
   
   In this section we provide the explicit expression for the full-space
   interaction energy between two.
\begin{lemma}[Interaction energy between two balls in full
  space]\label{lem-inter}
  Let $q \in \Rd$. Then for $0<\rho\leq \frac{|q|}{2}$ we have
  \begin{align}
    \dashint_{B_\rho(q)}\dashint_{B_\rho(0)}\frac{1}{|x-y|^{d+1}} \ dxdy  \ %
    = \ \frac{1}{|q|^{d+1}}\HH_d \Big(\frac{\rho^2}{|q|^2}\Big),
  \end{align}
  where $\HH_d(t)$ is given in \eqref{eq-Hd}. In particular, for $C=C_d>0$
  we have
\begin{align}\label{eq:lb_interaction}
  \frac{3(d+1)}{d+2} \frac{\rho^2}{|q|^{d+3}} \ %
  \leq  \  \frac{1}{|q|^{d+1}}\HH_d \Big(\frac{\rho^2}{|q|^2}\Big)  - \frac 1{|q|^{d+1}}  %
  \leq  \ \frac{C\rho^2}{|q|^{d+3}}.
\end{align}
\end{lemma}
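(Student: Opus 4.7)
The plan is to reduce the double integral to a dimensionless single-variable quantity via scaling, expand the integrand as a double power series using the binomial theorem, exploit the antipodal symmetry of $B_1$ to kill odd-order terms, and identify the surviving double series as the Appell function $F_4$. First I would substitute $u := y - q$, $v := x$ and factor out $|q|^{-(d+1)}$, reducing the claim to
\begin{align*}
J(r) := \dashint_{B_1}\dashint_{B_1}\bigl(1 + 2r\,\hat q\cdot(u-v) + r^2|u-v|^2\bigr)^{-(d+1)/2}\,du\,dv \ = \ \HH_d(r^2),
\end{align*}
where $r := \rho/|q|$ and $\hat q := q/|q|$; by rotational invariance of $B_1$ the scalar $J$ is independent of $\hat q$.

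Next, applying the binomial series $(1+X)^{-(d+1)/2} = \sum_{j\ge 0}(-1)^j((d+1)/2)_j/j!\cdot X^j$ to $X = s+t$ with $s := 2r\,\hat q\cdot(u-v)$ and $t := r^2|u-v|^2$, the antipodal involution $(u,v)\mapsto(-u,-v)$ sends $s\mapsto -s$ and $t\mapsto t$, hence kills every monomial $s^k t^{j-k}$ with $k$ odd. The surviving moments
$M_{\alpha,\ell} := \dashint_{B_1\times B_1}(\hat q\cdot(u-v))^{2\ell}|u-v|^{2\alpha}\,du\,dv$
are computed using rotational symmetry via the classical identity $\dashint_{S^{d-1}}(\hat q\cdot\omega)^{2\ell}\,d\sigma(\omega) = (1/2)_\ell/(d/2)_\ell$ together with Beta-function evaluations of the radial moments of the ball, yielding explicit ratios of Pochhammer symbols. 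Collecting terms and re-indexing so that the contributions coming from the $u$- and $v$-integrations decouple into independent indices $m, n$, one should match the result to
\begin{align*}
J(r) \ = \ \sum_{m,n\ge 0}\frac{(3/2)_{m+n}\,((d+1)/2)_{m+n}}{((d+2)/2)_m\,((d+2)/2)_n\,m!\,n!}\,r^{2(m+n)} \ = \ F_4\bigl(\tfrac{3}{2},\tfrac{d+1}{2};\tfrac{d+2}{2},\tfrac{d+2}{2};r^2,r^2\bigr) \ = \ \HH_d(r^2).
\end{align*}

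For the bounds \eqref{eq:lb_interaction}, every Pochhammer factor in \eqref{eq-Hd} is positive, so all coefficients $a_k$ of $\HH_d(t)-1 = \sum_{k\ge 1}a_k t^k$ are nonnegative. A direct computation of the $(m,n)\in\{(1,0),(0,1)\}$ contributions gives $a_1 = 3(d+1)/(d+2)$, yielding the lower bound. For the upper bound, the $F_4$ series converges on the disc $\{t < 1/4\}$, so by positivity of the coefficients $\HH_d(t)-1 \leq C_d\,t$ on $[0,1/4]$ (the endpoint $\rho = |q|/2$ is handled either by continuity from below or, if the boundary is excluded in the proof, by a direct estimate using the $k=1$ term plus a geometric tail). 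The main obstacle will be the identification with $F_4$: the moment computation naturally produces a series indexed by $(\alpha,\ell)$ where $\alpha$ is a radial exponent and $\ell$ an angular one, and reorganizing this into the symmetric double series with free indices $m, n$ of the $F_4$ definition requires a careful combinatorial re-indexing along with repeated application of Pochhammer identities such as $(a)_{m+n} = (a)_m(a+m)_n$ and the duplication formula $(a)_{2k} = 4^k(a/2)_k((a+1)/2)_k$.
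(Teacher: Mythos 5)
Your route is genuinely different from the paper's: the paper computes the integral in Fourier space, using $\widehat{\chi_{B_1}}(\xi)=|\xi|^{-d/2}J_{d/2}(|\xi|)$, Plancherel, and a Weber--Schafheitlin-type formula of Bailey for $\int_0^\infty J_{\frac{d-2}{2}}(s)|J_{\frac d2}(s/\lam)|^2 s^{-\frac{d-2}{2}}ds$, which hands over the Appell function $F_4$ in one step. You instead propose a real-space binomial expansion plus moment computations. That is a legitimate strategy in principle (it is essentially how such $F_4$ expansions of Riesz interactions between balls are derived classically), and your scaling reduction and the value $a_1=\frac{3(d+1)}{d+2}$ are correct. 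But as written there are two genuine gaps. First, convergence: with $X=2r\,\hat q\cdot(u-v)+r^2|u-v|^2$ and $|u-v|\le 2$, one only has $|X|\le 4r+4r^2$, so for $r$ larger than about $0.2$ the binomial series fails to converge pointwise on a set of positive measure in $B_1\times B_1$, and termwise integration is not justified on the full range $r\in(0,\tfrac12]$ required by the lemma. You would need to prove the identity for small $r$ and then extend by analyticity of both sides on $r\in(0,\tfrac12)$ (note $F_4(a,b;c,c';t,t)$ converges exactly for $t<\tfrac14$), with a separate limiting argument at $r=\tfrac12$; none of this is in the proposal.

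Second, and more importantly, the identification with $\HH_d$ in \eqref{eq-Hd} is not actually carried out. Your moments $M_{\alpha,\ell}$ involve $|u-v|^{2\alpha}$ and $(\hat q\cdot(u-v))^{2\ell}$, which do \emph{not} decouple into separate $u$- and $v$-integrations; to reach the symmetric double series in $(m,n)$ you must expand these powers of $u-v$ once more by the multinomial theorem, track the cross terms, and only then resum. This is precisely the step where the two denominator parameters $(\tfrac{d+2}{2})_m(\tfrac{d+2}{2})_n$ (one per ball) emerge, and asserting that the result ``should match'' $F_4\big(\tfrac32,\tfrac{d+1}{2};\tfrac{d+2}{2},\tfrac{d+2}{2};r^2,r^2\big)$ is the entire content of the lemma, not a routine verification. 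Unless you either carry out this resummation or cite a known expansion of $\dashint_{B_{\rho}(q)}\dashint_{B_{\rho}(0)}|x-y|^{-s}$ in terms of $F_4$, the proof is incomplete; the paper avoids this combinatorics entirely by outsourcing it to Bailey's Bessel integral. The final bounds \eqref{eq:lb_interaction} via positivity and monotonicity of the coefficients are fine and coincide with the paper's argument.
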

\begin{proof}
  By scaling and rotation invariance, we may assume that $\rho =1$,
  $\lam:=|q| \geq 2$ and $q=|q| e_1$. We note that the Fourier transform for
  $\chi_{B_1(0)}$ is 
\begin{align}
  \widehat{\chi_{B_1(0)}}(\xi) \ %
  := \ \frac{1}{(2\pi)^{\frac d2}}\int_{B_1(0)} e^{-i\xi\cdot x} \ dx \ %
  = \  |\xi|^{-\frac{d}{2}} J_\frac{d}{2}(|\xi|), \label{mu-char}
\end{align}
where $J_\nu$ is the Bessel function of first kind.  Using that
$B_1(0) =: B_1$ and $B_1(q) = B_1 + q$ are disjoint, we have
\begin{align}
X &:= \int_{B_1(q)}\int_{B_1(0)}\frac{1}{|x-y|^{d+1}} \ dxdy \\
&=  -\frac{1}{2}\int_{\R^d} \frac 1{|h|^{d+1}} \int_{\R^d} (\chi_{B_1}(x+h)-\chi_{B_1}(x))(\chi_{B_1+q}(x+h)-\chi_{B_1+q}(x))  \ dx dh.
\end{align}
By Plancherel and with the change of variable $h\mapsto \frac{h}{|\xi|}$ we
have
\begin{align}\label{eq:Plancherel}
  X &\ =\ -\frac{1}{2}\int_{\R^d}\int_{\R^d} e^{i\xi\cdot q}|\widehat{\chi_B}(\xi)|^2 \frac{|e^{i\xi\cdot h}-1|^2}{|h|^{d+1}}\ d\xi dh  \\
  &\ =\ -\frac{1}{2}\int_{\R^d}\int_{\R^d} e^{i\xi\cdot q}|\widehat{\chi_B}(\xi)|^2 |\xi| \frac{|e^{ih_1}-1|^2}{|h|^{d+1}}\ d\xi dh \\
    &\ =\  -c_d \int_{\R^d}e^{i\xi\cdot q}|\widehat{\chi_B}(\xi)|^2|\xi| \ d\xi,%
\end{align}
where the second equation of \eqref{eq:Plancherel} follows from the and where 
  \begin{align}
      c_d \ := \ \frac 12 \int_{\R^d}\frac{|e^{ih_1}-1|^2}{|h|^{d+1}}\ dh  \ = \ \frac{\pi^{\frac{d+1}{2}}}{\Gamma(\frac{d+1}{2})}.
  \end{align}
  Plugging \eqref{mu-char} into \eqref{eq:Plancherel} and with
$s := |\xi|\lam$, $\xi_1 = |\xi| \cos \theta$ and $t :=\cos\theta$ we get
\begin{align}
  X \ %
  &= \ - c_d\int_{\R^d}e^{i \xi_1 \lam}|J_{\frac{d}{2}}(|\xi|)|^2 |\xi|^{-d+1} \ d\xi\  \\
  & = - c_d\frac{\sig_{d-2}}{\lam} \int_0^\infty | J_{\frac{d}{2}}(\frac{s}{\lam})|^2 \int_{-1}^1e^{its} (1-t^2)^{-\frac{d-3}{2}} \ dt ds\\
  &=-\frac{c_d (2\pi)^{\frac{d}{2}}}{\lam} \int_0^\infty J_{\frac{d-2}{2}}(s)|J_{\frac{d}{2}}(\frac{s}{\lam})|^2 s^{-\frac{d-2}{2}}\ ds \ %
    =\frac{\pi^d}{\Gamma(\frac{d}{2}+1)^2\lam^{d+1}}\HH_d(\frac 1{\lam^2}), \label{this-c} %
\end{align}
where the last equality is due to \cite[eq. (7.1)]{Bailey35}. Taking the average
of $X$ by dividing $\ome_d^2$ and since
$\ome_d=\frac{\pi^{\frac{d}{2}}}{\Gamma(1+\frac{d}{2})}$, we obtain
\eqref{eq-Hd}. From the definition we see that $\HH_d(t)$ is monotonically
increasing in $t$ with $\HH_d(t) \geq 1+\frac{3(d+1)}{d+2}t$. This yields the
lower bound in \eqref{eq:lb_interaction}. The upper bound follows since
$\HH_d(t)=1+\OO(t)$ as $t\rightarrow 0$.
\end{proof}

\textbf{Acknowledgements:} HK and WS are very grateful to Felix Otto who brought
up the idea of using the autocorrelation function. This project started in
discussions with him. H.K. is a member of the Heidelberg STRUCTURES
  Excellence Cluster, which is funded by the Deutsche Forschungsgemeinschaft
  (DFG, German Research Foundation) under Germany’s Excellence Strategy EXC
  2181/1 - 390900948.

\renewcommand{\em}{\it} 

\bibliographystyle{plain}
\bibliography{periodic}

\end{document}